\documentclass[oneside]{amsart}

\usepackage{amssymb}

\usepackage[%
papersize={8.5in,11in},%
left=1in,%
bottom=1.3in,%
textwidth=5.1in%
]{geometry}
\usepackage{enumitem}

\def\papertitle{%
{C}arath\'eodory number of homogeneous convex cones%
}%
\def\papertitleshort{%
{C}arath\'eodory number of homogeneous cones%
}%
\def\keywordsphrases{%
Homogeneous cone%
; spectrahedral representation%
; Carath\'eodory number%
}%

\def\authorenameshort{C.~B. Chua}

\def\authoreaddress{%
Division of Mathematical Sciences, %
School of Physical \& Mathematical Sciences, %
Nanyang Technological University, %
Singapore 637371, %
Singapore%
}%

\def\authoremail{cbchua@ntu.edu.sg}

\def\authorthanks{%
\authoreaddress(\texttt{\authoremail})%
}%

\theoremstyle{plain}
\newtheorem{theorem}{Theorem}[section]

\newtheorem{corollary}[theorem]{Corollary}
\newtheorem{proposition}[theorem]{Proposition}

\theoremstyle{definition}

\theoremstyle{remark}

\numberwithin{equation}{section}

\usepackage[
    linktocpage=true,
    colorlinks=true,
    citecolor=blue,
    pdftex, %uncomment for PDF output
]{hyperref}

\hypersetup{pdfproducer={}}%
\hypersetup{pdfpagemode={}}%
\hypersetup{pdfstartview={XYZ null null 1}}%
\hypersetup{pdfstartview=FitH}%
\hypersetup{pdfauthor={Chek Beng Chua}}%
\hypersetup{pdfsubject={%
}}%
\hypersetup{pdfkeywords={\keywordsphrases}}%
\hypersetup{pdftitle={\papertitle}}%

\DeclareMathOperator{\trace}{trace}
\DeclareMathOperator{\proj}{proj}

\begin{document}

\title[\papertitleshort]{\papertitle}

\author[\authorenameshort]{\authorenameshort}

\thanks{\authorthanks}

\keywords{\keywordsphrases}

\subjclass[2020]{%
90C25; 52A20%
}%

\begin{abstract}
We study the Carath\'eodory number of homogeneous convex cones via their
spectrahedral representations. A characterization of homogeneous convex
cones whose ranks match their Carath\'eodory numbers is given. This
characterization is then used to show that a homogeneous convex cone is
selfdual if and only if its rank matches the Carath\'eodory numbers of both
its closure and its dual cone. It is further used to show that the only
sparse spectrahedral cones that are homogeneous convex cones are those
described by homogeneous chordal graphs.
\end{abstract}

\maketitle

% ======================================================================= %
\section{Introduction}
\label{section:introduction}
% ======================================================================= %

A homogeneous convex cone is a pointed\footnote{A cone is said to be
pointed if it does not contain any affine subspace.} convex cone in a
finite-dimensional vector space that is relatively
open\footnote{Traditionally, homogeneous convex cones, which arise from the
study of bounded homogeneous domains, are open cones, although it is common
in the optimization literature to define them as closed convex cones whose
relative interiors are homogeneous convex cones in the traditional sense.}
and whose linear automorphism group acts transitively on it. Homogeneous
convex cones form an important class of convex cones in convex conic
programming as it subsumes convex cones found in prominent convex programs
such as positive definite cones and symmetric cones. In this paper, we
study a geometric invariant of homogeneous convex cones known as the
Carath\'eodory number, followed by specializing our study to symmetric
cones and sparse positive definite cones.

The Carath\'eodory number at a point $x$ of a finite-dimensional closed
pointed convex cone $\bar{\mathsf{K}}$ is the minimum number of extreme
rays of $\bar{\mathsf{K}}$ needed to contain $x$ in their Minkowski sum; it
is denoted by $\kappa_{\bar{\mathsf{K}}}(x)$. The Carath\'eodory number of
$\bar{\mathsf{K}}$ is then the maximum Carath\'eodory number at all points
in $\bar{\mathsf{K}}$; it is denoted by $\kappa_{\bar{\mathsf{K}}}$. So,
every point in $\bar{\mathsf{K}}$ is in the Minkowski sum of
$\kappa_{\bar{\mathsf{K}}}$ extreme rays of $\bar{\mathsf{K}}$, with at
least one point requiring exactly $\kappa_{\bar{\mathsf{K}}}$ extreme rays.
Carath\'eodory's theorem implies that the Carath\'eodory number
$\kappa_{\bar{\mathsf{K}}}$ of a closed convex cone $\bar{\mathsf{K}}$ is
at most the dimension of the cone; i.e., the dimension of the linear span
of the cone. For details, see \cite[\S17]{Rock97}.

The Carath\'eodory number of the closure of a homogeneous convex cone was
proved by G\"{u}ler and Tun\c{c}el \cite{GulTun98} to be a lower bound on
an algebraic invariant of the homogeneous convex cone known as its
rank\footnote{According to Vinberg \cite[p.~377]{Vin63}, the rank of a
homogeneous convex cone can be formally defined as ``the dimension of its
maximal connected commutative group of automorphisms, consisting of
semisimple transformations with real eigenvalues.''}, by showing that the
rank of a homogeneous convex cone is the least possible barrier parameter
of self-concordant barriers used in interior-point methods for convex conic
programs over the homogeneous convex cone. One of the main contributions of
this paper is a characterization of homogeneous convex cones whose ranks
agree with the Carath\'eodory numbers of their closures.

In the same work \cite{GulTun98}, the authors further showed that when a
homogeneous convex cone is selfdual, which is then known as a symmetric
cone, its rank agrees with the Carath\'eodory number of its closure; see,
also, \cite[Theorem~8]{TruongTun04} and \cite[Theorem~13]{ItoLou17}.
Another main contribution of this paper uses our characterization of
homogeneous convex cones with rank-matching Carath\'eodory numbers to prove
that symmetric cones are the only homogeneous convex cones whose ranks
agree with the Carath\'eodory numbers of both their closures and their dual
cones. Along the way, we provide a new proof of the fact that the rank of a
symmetric cone agrees with the Carath\'eodory number of its closure.

Our study of the Carath\'eodory numbers of homogeneous convex cones and
their relation with the ranks of the cones requires an exposition of the
facial structure of the closures of homogeneous convex cones. Rather than
describing the faces of the closure of a homogeneous convex cone via
algebraic means (see \cite[\S4]{Chua06}), we find that this exposition is
more accessible when we represent the homogeneous convex cone as a
spectrahedral cone, which is a linear slice $\mathbb{S}^{n}_{++} \cap
\mathbb{L}$ of a cone of positive definite real matrices
$\mathbb{S}^{n}_{++}$ (or a linear slice $\mathbb{S}^{n}_{+} \cap
\mathbb{L}$ of a cone of positive semidefinite real matrices
$\mathbb{S}^{n}_{+}$ when representing its closure). Indeed, every
homogeneous convex cone has a spectrahedral representation; see
\cite{Chua02,Ishi15,Rot66}. Through these spectrahedral representations, we
discover explicit spectrahedral representations of the proper faces of the
closures of homogeneous convex cones and their dual cones, from which we
uncover a description of the rank as a geometric invariant and recover the
fact that the rank of a homogeneous convex cone is an upper bound on the
Carath\'eodory number of its closure.

In the case when the spectrahedral representation $\mathbb{S}^{n}_{++} \cap
\mathbb{L}$ simply describes a sparsity pattern on positive definite
matrices of order $n$ (i.e., the linear subspace $\mathbb{L}$ describes a
sparsity pattern), we show that this bound is tight (i.e., the rank matches
the Carath\'eodory number). Moreover, we use this tightness to characterize
all sparsity patterns whose sets of sparse positive definite matrices
described by them are homogeneous convex cones. As far as we know, this is
the first geometric proof of a known result established by Ishi
\cite{Ishi13} via a purely algebraic proof.

\subsection{Organization}
\label{section:organization}
% .  .  .  .  .  .  .  .  .  .  .  .  .  .  .  .  .  .  .  .  .  .  .  .  %

This paper is organized as follows. We begin the next section by giving,
for any homogeneous convex cone, the explicit description of a
spectrahedral representation and a simply transitive group of linear
automorphisms represented by linear automorphisms of the representing
positive definite cone, as provided by Ishi in \cite{Ishi15}. We then do
the same for the dual cone. In Section~\ref{section:facialstructure}, we
provide explicit spectrahedral representations of faces of closures of
homogeneous convex cones and their dual cones, which allow us to describe
the rank as a geometric invariant and establish it as an upper bound on the
Carath\'eodory number of its closure. We then use these explicit
descriptions of the faces in Section~\ref{section:Caratheodorynumbers} to
give a characterization of homogeneous convex cones whose ranks match the
Carath\'eodory numbers of their closures. A similar characterization is
also provided for the dual cones in the same section. Finally, in
Section~\ref{section:applications}, we apply these characterizations to i)
show that a homogeneous convex cone is selfdual if and only if its rank
matches the Carath\'eodory numbers of its closure and its dual cone; and
ii) show that when a sparsity pattern on positive definite matrices
produces a homogeneous convex cone, the sparsity pattern must be described
by a homogeneous chordal graph.

\subsection{Notations and conventions}
\label{section:notations}
% .  .  .  .  .  .  .  .  .  .  .  .  .  .  .  .  .  .  .  .  .  .  .  .  %

We shall use the blackboard bold font (e.g., $\mathbb{R}$, $\mathbb{V}$,
$\mathbb{T}$) to denote (finite-dimensional) real vector spaces, a sans
serif font (e.g., $\mathsf{K}$, $\mathsf{T}$, $\mathsf{F}$) to denote sets,
and a calligraphy font (e.g., $\mathcal{T}$, $\mathcal{L}$, $\mathcal{I}$)
to denote linear transformations between real vector spaces. The notations
$\mathbb{R}$, $\mathbb{R}^{n}$, $\mathbb{R}^{m \times n}$, and
$\mathbb{S}^{n}$ denote the real vector spaces of, respectively, the real
numbers, the real $n$-vectors, the real $m$-by-$n$ matrices, and the real
symmetric matrices of order $n$. We denote by $I_{n}$ the identity matrix
in $\mathbb{S}^{n}$ and by $O_{m \times n}$ the zero matrix in
$\mathbb{R}^{m \times n}$. We use $\mathbb{S}^{n}_{++}$ and
$\mathbb{S}^{n}_{+}$ to denote the cones of, respectively, positive
definite and positive semidefinite matrices in $\mathbb{S}^{n}$, and may
use $\mathbb{R}_{++}$ and $\mathbb{R}_+$ instead when $n = 1$. Unless
otherwise specified, we always use the trace inner product $\left\langle
\cdot \middle| \cdot \right\rangle_{\mathrm{F}} : (X, Y) \in \mathbb{L}
\times \mathbb{L} \mapsto \trace\left( X^{\top} Y \right)$ on every linear
subspace $\mathbb{L}$ of real matrices, where $X^{\top}$ denotes the
transpose of the matrix $X$, together with the induced Frobenius norm
$\left\lVert \cdot\right\rVert_{F} : X \in \mathbb{L} \mapsto
\sqrt{\left\langle X \middle| X \right\rangle_{\mathrm{F}}}$, and identify
its dual vector space with itself via the trace inner product. For a
relatively open cone $\mathsf{K}$ in $\mathbb{S}^{n}$, we denote by
$\bar{\mathsf{K}}^{*}$ its dual cone $\left\{Y \in \mathbb{S}^{n}\
\middle|\ \left\langle X \middle| Y \right\rangle_{\mathrm{F}} \geq 0 \
\forall X \in \mathsf{K}\right\}$, which is the closed convex cone of
nonnegative linear functions on $\mathsf{K}$, and reserve the notation
$\mathsf{K}^{*}$ for the relative interior of the dual cone. The orthogonal
projection onto a linear subspace $\mathbb{L}$ of real matrices is denoted
by $\proj_{\mathbb{L}}$. The adjoint transformation of a linear
transformation $\mathcal{T} : \mathbb{V} \to \mathbb{W}$ between linear
subspaces of real matrices is denoted by $\mathcal{T}^{*} : \mathbb{W} \to
\mathbb{V}$, and it is defined by
\[
\left\langle \mathcal{T} X \middle| Y \right\rangle_{\mathrm{F}} =
\left\langle X \middle| \mathcal{T}^{*} Y \right\rangle_{\mathrm{F}} \quad
\forall (X, Y) \in \mathbb{V} \times \mathbb{W}.
\]

% ======================================================================= %
\section{Spectrahedral representations of homogeneous convex cones}
\label{section:spectrahedralrepresentations}
% ======================================================================= %

In \cite{Ishi15}, Ishi gave, for a given homogeneous convex cone of rank
$r$, an explicit description of a spectrahedral representation $\mathsf{K}
= \mathbb{S}^{n}_{++} \cap \mathbb{V}$ of order $n$, where $\mathbb{V}$ is
a linear subspace of real symmetric block matrices of the form
\begin{equation}\label{eq:spaceofsymmblockmatrix}%
\mathbb{V} := \left\{
\begin{pmatrix}
X_{11} & X_{12} & \cdots & X_{1r}
\\
X_{12}^{\top} & X_{22} & \ddots & \vdots
\\
\vdots & \ddots & \ddots & X_{r-1,r}
\\
X_{1r}^{\top} & \cdots & X_{r-1,r}^{\top} & X_{rr}
\end{pmatrix}
\ \middle|\
\begin{aligned}
&X_{ii} \in \mathbb{R} I_{n_{i}} \ (1 \leq i \leq r)
\\
&X_{ij} \in \mathbb{V}_{ij} \ (1 \leq i < j \leq r)
\end{aligned}
\right\},
\end{equation}
for some natural numbers $n_{1}, \dotsc, n_{r}$ with $n_{1} + \dotsb +
n_{r} = n$ and some linear subspaces $\mathbb{V}_{ij} \subseteq
\mathbb{R}^{n_{i} \times n_{j}}$ $(1 \leq i < j \leq r)$. We shall call
this a \emph{block matrix spectrahedral cone} and reserve the notation
$\mathbb{S}^{n}_{++} \cap \mathbb{V}$ for block matrix spectrahedral cones,
while using $\mathbb{S}^{n}_{++} \cap \mathbb{L}$ for general spectrahedral
cones. For convenience, we shall denote by $\mathbb{V}_{ii}$ the linear
subspace $\mathbb{R} I_{n_{i}} \subset \mathbb{R}^{n_{i} \times n_{i}}$.

\begin{theorem}[Theorem~2 and Proposition~2 of \cite{Ishi15}]
\label{theorem:Ishirepresentation}%
Every homogeneous convex cone is linearly isomorphic to a block matrix
spectrahedral cone $\mathbb{S}^{n}_{++} \cap \mathbb{V}$ where the linear
subspaces $\mathbb{V}_{ij}$ $(1 \leq i \leq j \leq r)$ satisfy the
properties
\begin{enumerate}[label={[V\arabic*]},ref={[V\arabic*]}]
\item\label{axiom:triangularproductblock}%
$A_{ij} B_{jk} \in \mathbb{V}_{ik}$ for all $(A_{ij}, B_{jk}) \in
\mathbb{V}_{ij} \times \mathbb{V}_{jk}$ with $1 \leq i \leq j \leq k \leq
r$,

\item\label{axiom:mixedtriangularproductblock}%
$A_{ij}^{\top} B_{ik} \in \mathbb{V}_{jk}$ for all $(A_{ij}, B_{ik}) \in
\mathbb{V}_{ij} \times \mathbb{V}_{ik}$ with $1 \leq i \leq j < k \leq r$,
and

\item\label{axiom:selfproductblock}%
$A_{ij}^{\top} A_{ij} \in \mathbb{V}_{jj}$ for all $A_{ij} \in
\mathbb{V}_{ij}$ with $1 \leq i \leq j \leq r$.

\end{enumerate}
\end{theorem}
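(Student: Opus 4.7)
The plan is to deduce the theorem from Vinberg's structure theory of homogeneous convex cones. Any homogeneous convex cone $\mathsf{K}$ of rank $r$ admits a simply transitive action by a solvable linear Lie group $H$; fixing a base point $e \in \mathsf{K}$ identifies $\mathsf{K}$ with the orbit $H \cdot e$. Under the adjoint action of a maximal abelian subalgebra of semisimple elements with real eigenvalues, the Lie algebra $\mathfrak{h}$ decomposes as a direct sum $\mathfrak{h} = \bigoplus_{1 \leq i \leq j \leq r} \mathfrak{h}_{ij}$ in which the diagonal pieces $\mathfrak{h}_{ii}$ are one-dimensional and the off-diagonal pieces account for the nilradical. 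The natural numbers $n_{i}$ and subspaces $\mathbb{V}_{ij}$ in the statement will be read off from this decomposition.

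First, I would build a faithful linear representation of $\mathfrak{h}$ on an ambient space $\mathbb{R}^{n}$ with $n = n_{1} + \cdots + n_{r}$ so that each element of $\mathfrak{h}_{ij}$ acts as a matrix supported in block position $(i,j)$ with entries in a prescribed subspace $\mathbb{V}_{ij} \subseteq \mathbb{R}^{n_{i} \times n_{j}}$. Define $\mathbb{V}$ as in \eqref{eq:spaceofsymmblockmatrix}. The simply transitive action of $H$ on $\mathsf{K}$ is then realized by congruence $X \mapsto L X L^{\top}$ where $L$ ranges over the lower triangular image of $H$ in $GL_{n}(\mathbb{R})$; the orbit of $I_{n}$ under this action is exactly $\mathbb{S}^{n}_{++} \cap \mathbb{V}$, via a block Cholesky factorization adapted to the grading.

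Second, I would verify the three axioms. Axioms \ref{axiom:triangularproductblock} and \ref{axiom:mixedtriangularproductblock} reduce to the statement that $\mathbb{V}$ is closed under the symmetrized product $(X, Y) \mapsto \tfrac{1}{2}(XY + YX)$, which is forced by the fact that conjugation by elements of the image of $H$ preserves $\mathbb{V}$; differentiating this invariance along each root subspace, together with taking adjoints, yields precisely the two closure conditions. Equivalently, these axioms encode the associativity of Vinberg's T-algebra multiplication built from $\mathfrak{h}$. Axiom \ref{axiom:selfproductblock}, which asserts that $A_{ij}^{\top} A_{ij} \in \mathbb{R}\, I_{n_{j}}$, is the rigidity statement that reflects the one-dimensionality of the diagonal root spaces.

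The main obstacle is precisely the verification of \ref{axiom:selfproductblock}: the representation of $\mathfrak{h}$ on $\mathbb{R}^{n}$ must be arranged so that the quadratic map $A \mapsto A^{\top} A$ on each $\mathbb{V}_{ij}$ lands in the one-dimensional subspace $\mathbb{R}\, I_{n_{j}}$. This is a strong, non-generic compatibility between the dimensions $n_{i}$ and the structure constants of the grading, and it is exactly the technical core of Ishi's construction in \cite{Ishi15}; I would address it by constructing the representation not abstractly but rather via the regular representation of the T-algebra on itself, for which this quadratic identity follows from the defining relations of T-algebras established by Vinberg. With \ref{axiom:selfproductblock} in hand, the transitivity of the congruence action on $\mathbb{S}^{n}_{++} \cap \mathbb{V}$ follows by an inductive block Cholesky argument, completing the linear isomorphism between $\mathsf{K}$ and the block matrix spectrahedral cone.
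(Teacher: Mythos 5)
The paper does not prove this theorem; it is stated as a direct citation of Ishi \cite{Ishi15} (Theorem~2 and Proposition~2), so there is no in-paper argument to compare against, and your sketch has to be judged on its own. The overall route you outline --- Vinberg's graded solvable Lie algebra, a block-triangular linear representation, and realization of the cone as the orbit of $I_n$ under congruence --- is the route taken by Chua \cite{Chua02} and Ishi \cite{Ishi15}, but two of your steps do not hold up.

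First, the claim that axioms \ref{axiom:triangularproductblock} and \ref{axiom:mixedtriangularproductblock} reduce to closure of $\mathbb{V}$ under the Jordan product $(X,Y)\mapsto\tfrac{1}{2}(XY+YX)$ is wrong. Jordan closure would also force $A_{ij}A_{ij}^\top\in\mathbb{V}_{ii}=\mathbb{R}I_{n_i}$ (square a matrix in $\mathbb{V}$ supported in blocks $(i,j)$ and $(j,i)$ and inspect the $(i,i)$ block), which is not among Ishi's axioms and already fails for the standard Ishi realization of the Lorentz cone, where $n_1>1=n_2$ and $A_{12}A_{12}^\top=vv^\top$ has rank one. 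Moreover the group $\mathsf{T}_{\mathsf{K}}$ acts by \emph{congruence} $X\mapsto T^\top X T$, not conjugation; differentiating the congruence invariance of $\mathbb{V}$ at $T=I_n$ does yield \ref{axiom:triangularproductblock}--\ref{axiom:selfproductblock} as necessary conditions once a block realization is in hand, but this goes through the closure conditions \ref{axiom:triangularproduct} and \ref{axiom:selfproduct} of Section~\ref{section:simplytransitivegroup}, not through the Jordan product. Second, you correctly flag \ref{axiom:selfproductblock} as the crux, but you resolve it by invoking ``Ishi's construction'' and ``Vinberg's T-algebra defining relations,'' which is circular at this level of detail: producing block orders $n_j$ and embeddings $\mathbb{V}_{ij}\hookrightarrow\mathbb{R}^{n_i\times n_j}$ for which every $A_{ij}^\top A_{ij}$ is a scalar multiple of $I_{n_j}$ is exactly the content of the theorem being proved. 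The choice is delicate (for instance $n_i=1<n_j$ forces $\mathbb{V}_{ij}=\{0\}$, since $vv^\top$ has rank one), and a proof has to exhibit the block sizes and the isometric embeddings of the $T$-algebra components explicitly and then check compatibility with Vinberg's inner-product axioms; that verification is the substance of Ishi's argument and is absent from your sketch.
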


We shall call the block matrix spectrahedral cone $\mathbb{S}^{n}_{++} \cap
\mathbb{V}$ in Theorem~\ref{theorem:Ishirepresentation} an \emph{Ishi
spectrahedral representation} of the homogeneous convex cone.

\subsection{A simply transitive linear automorphism group}
\label{section:simplytransitivegroup}
% .  .  .  .  .  .  .  .  .  .  .  .  .  .  .  .  .  .  .  .  .  .  .  .  %

The three properties \ref{axiom:triangularproductblock},
\ref{axiom:mixedtriangularproductblock}, and \ref{axiom:selfproductblock}
of the linear subspaces $\mathbb{V}_{ij}$ in the Ishi spectrahedral
representation $\mathbb{S}^{n}_{++} \cap \mathbb{V}$ show that the linear
subspace of upper triangular matrices $T \in \mathbb{R}^{n \times n}$
satisfying $T + T^{\top} \in \mathbb{V}$,
\[
\mathbb{T} := \left\{
\begin{pmatrix}
T_{11} & T_{12} & \cdots & T_{1r}
\\
O_{n_{1} \times n_{2}}^{\top} & T_{22} & \cdots & T_{2r}
\\
\vdots & \vdots & \ddots & \vdots
\\
O_{n_{1} \times n_{r}}^{\top} & O_{n_{2} \times n_{r}}^{\top} & \cdots &
T_{rr}
\end{pmatrix}
\ \middle|\
\begin{aligned}
&T_{ii} \in \mathbb{V}_{ii} = \mathbb{R} I_{n_{i}} \ (1 \leq i \leq r)
\\
&T_{ij} \in \mathbb{V}_{ij} \ (1 \leq i < j \leq r)
\end{aligned}
\right\},
\]
plays a special role. Indeed, for a general block matrix spectrahedral cone
$\mathbb{S}^{n}_{++} \cap \mathbb{V}$, the property
\ref{axiom:triangularproductblock} is equivalent to
\begin{enumerate}[label={[U\arabic*]},ref={[U\arabic*]}]
\item\label{axiom:triangularproduct}%
$T U \in \mathbb{T}$ for all $T, U \in \mathbb{T}$,

\end{enumerate}
while the two properties \ref{axiom:mixedtriangularproductblock} and
\ref{axiom:selfproductblock} are, together, equivalent to
\begin{enumerate}[label={[U\arabic*]},ref={[U\arabic*]}]
\setcounter{enumi}{1}%
\item\label{axiom:selfproduct}%
$T^{\top}  T \in \mathbb{V}$ for all $T \in \mathbb{T}$.

\end{enumerate}
This property \ref{axiom:selfproduct} is generally weaker than
\begin{enumerate}[label={[U\arabic*$'$]},ref={[U\arabic*$'$]}]
\setcounter{enumi}{1}%
\item\label{axiom:selfproductX}%
$T^{\top} X T \in \mathbb{V}$ for all $X \in \mathbb{V}$ and all $T \in
\mathbb{T}$,

\end{enumerate}
but would be equivalent to it under the property
\ref{axiom:triangularproduct}. This is because the polarization of
\ref{axiom:selfproduct} is
\[
T^{\top}  U + U^{\top} T \in \mathbb{V} \text{ for all } T, U \in
\mathbb{T},
\]
which, together with \ref{axiom:triangularproduct}, implies that for each
$X \in \mathbb{V}$ and each $T \in \mathbb{T}$,
\[
T^{\top} X T = T^{\top} \hat{X} T + T^{\top} \hat{X}^{\top} T \in
\mathbb{V}
\]
when $\hat{X}$ is the block-triangular matrix in $\mathbb{T}$ satisfying $X
= \hat{X} + \hat{X}^{\top}$.

The two properties \ref{axiom:triangularproduct} and
\ref{axiom:selfproductX} are, in turn, respectively equivalent to the
seemingly weaker properties
\begin{enumerate}[label={[U\arabic*\textsuperscript{+}]},
ref={[U\arabic*\textsuperscript{+}]}]
\item\label{axiom:triangularproductpositivediagonal}%
$T U \in \mathbb{T}$ for all $T, U \in \mathbb{T}_{++}$, and

\item\label{axiom:selfproductXpositivediagonal}%
$T^{\top} X T \in \mathbb{V}$ for all $X \in \mathbb{V}$ and all $T \in
\mathbb{T}_{++}$,

\end{enumerate}
where $\mathbb{T}_{++}$ denotes the subset of matrices in $\mathbb{T}$ with
positive diagonals, because for every $T, U \in \mathbb{T}$, there is some
$\alpha I_{n}$ with $\alpha \geq 0$ for which $T + \alpha I_{n}, U + \alpha
I_{n} \in \mathbb{T}_{++}$, so that the property
\ref{axiom:triangularproductpositivediagonal} implies that
\[
T U = (T + \alpha I_{n}) (U + \alpha I_{n}) - \alpha T - \alpha U -
\alpha^2 I_{n} \in \mathbb{T},
\]
and the property \ref{axiom:selfproductXpositivediagonal} implies that, for
each $X \in \mathbb{V}$,
\[
T^{\top} X T = 2 (T + \alpha I_{n})^{\top} X (T + \alpha I_{n}) - (T + 2
\alpha I_{n})^{\top} X (T + 2 \alpha I_{n}) + 2 \alpha^2 X \in \mathbb{V}.
\]

The two properties \ref{axiom:triangularproductpositivediagonal} and
\ref{axiom:selfproductXpositivediagonal} on $\mathbb{T}$ further show that
the set of linear transformations
\[
\mathsf{T}_{\mathsf{K}} := \left\{\mathcal{T}_{T} : X \in \mathbb{V}
\mapsto T^{\top} X T\ \middle|\ T \in \mathbb{T}_{++}\right\}
\]
plays a special role. The property
\ref{axiom:triangularproductpositivediagonal} implies that
$\mathsf{T}_{\mathsf{K}}$ is closed under composition; i.e.,
$\mathsf{T}_{\mathsf{K}}$ is a linear transformation monoid. Conversely,
when $\mathsf{T}_{\mathsf{K}}$ is closed under composition, for every $T, U
\in \mathbb{T}_{++}$, the composition $\mathcal{T}_{U} \mathcal{T}_{T} =
\mathcal{T}_{T U}$ coincides with some $\mathcal{T}_{\hat{T}} \in
\mathsf{T}_{\mathsf{K}}$ for some matrix $\hat{T} \in \mathbb{T}_{++}$, and
hence, $T U = \hat{T} \in \mathbb{T}$ is the unique Cholesky factor of the
positive definite matrix $(T U)^{\top} (T U) = \mathcal{T}_{T U}(I_{n}) =
\mathcal{T}_{\hat{T}}(I_{n}) = \hat{T}^{\top} \hat{T}$. The property
\ref{axiom:selfproductXpositivediagonal} implies that the linear
transformations in $\mathsf{T}_{\mathsf{K}}$ are linear endomorphisms of
$\mathsf{K}$. Conversely, when the linear transformations in
$\mathsf{T}_{\mathsf{K}}$ are linear endomorphisms of $\mathsf{K}$, each
linear endomorphism $\mathcal{T}_{T} \in \mathsf{T}_{\mathsf{K}}$ maps
every matrix $X$ in $\mathsf{K} = \mathbb{S}^{n}_{++} \cap \mathbb{V}$ to a
matrix $\mathcal{T}_{T}(X) = T^{\top} X T$ in $\mathsf{K}$, and hence maps
every matrix $X$ in the linear span $\mathbb{V}$ of $\mathsf{K}$ to a
matrix $\mathcal{T}_{T}(X) = T^{\top} X T$ in the linear span $\mathbb{V}$.

Furthermore, the property \ref{axiom:triangularproductpositivediagonal}
implies that every matrix $T \in \mathbb{T}_{++}$ has its inverse matrix
$T^{-1}$ in $\mathbb{T}$, because
\[
T^{-1} = D^{-1} (I - N D^{-1} + \dotsb + (-1)^{r-1} (N D^{-1})^{r-1}),
\]
where $T = D + N$ is the decomposition of $T$ into the sum of a diagonal
matrix $D \in \mathbb{T}$ and a nilpotent matrix $N \in \mathbb{T}$.
Therefore, the property \ref{axiom:triangularproductpositivediagonal} is,
in fact, equivalent to $\mathsf{T}_{\mathsf{K}}$ being a linear
transformation group, so that, together with
\ref{axiom:selfproductXpositivediagonal}, it is equivalent to
$\mathsf{T}_{\mathsf{K}}$ being a group of linear automorphisms of
$\mathsf{K}$.

Moreover, the two properties \ref{axiom:triangularproductpositivediagonal}
and \ref{axiom:selfproductXpositivediagonal} imply that every matrix $X$ in
the Ishi spectrahedral representation $\mathsf{K} = \mathbb{S}^{n}_{++}
\cap \mathbb{V}$ is the image $\mathcal{T}_{T}(I_{n}) = T^{\top} T$ of
$I_{n}$ under $\mathcal{T}_{T} \in \mathsf{T}_{\mathsf{K}}$ with $T$ being
the product $X(r)^{1/2} T(r-1)^{-1} \dotsb T(1)^{-1}$ of the
block-triangular matrices in the recursion
\[
X(k+1) = T(k)^{\top} X(k) T(k) \in \mathbb{V} \quad \text{for } k = 1,
\dotsc, r-1,
\]
where $X(1)$ is the matrix $X$ and $T(k) \in \mathbb{T}$ is the triangular
matrix with ones on the diagonal, $T(k)_{kj} = -X(k)_{kk}^{-1} X(k)_{kj}$
for $j \in \left\{k+1, \dotsc, r\right\}$, and zero blocks everywhere else.
Therefore, the two properties \ref{axiom:triangularproductpositivediagonal}
and \ref{axiom:selfproductXpositivediagonal} (or, equivalently, the
properties \ref{axiom:triangularproductblock},
\ref{axiom:mixedtriangularproductblock}, and \ref{axiom:selfproductblock})
are, together, equivalent to the property that
\begin{enumerate}[label={[T]},ref={[T]}]
\item\label{axiom:automorphismgroup}%
$\mathsf{T}_{\mathsf{K}}$ is a simply transitive group of linear
automorphisms of $\mathsf{K}$.

\end{enumerate}
This shows that the Ishi spectrahedral representation $\mathsf{K} =
\mathbb{S}^{n}_{++} \cap \mathbb{V}$ is a homogeneous convex cone; see,
also, \cite[Proposition~2 and Theorem~3]{Ishi15}. Of course, this means
that $\mathsf{K}$ is also the orbit of $I_{n} \in \mathsf{K}$ under the
action of the transitive group $\mathsf{T}_{\mathsf{K}}$; i.e.,
\[
\mathsf{K} = \left\{\mathcal{T}_{T} I_{n} = T^{\top} T\ \middle|\ T \in
\mathbb{T}_{++}\right\}.
\]

\subsection{ The dual cones}
\label{section:dualcones}
% ....................................................................... %

In general, the dual cone of the intersection of two closed cones is the
Minkowski sum of their respective dual cones when their relative interiors
have a nonempty intersection; see \cite[Corollary~16.4.2]{Rock97}. This
gives the dual cone $\bar{\mathsf{K}}^{*}$ of the Ishi spectrahedral
representation $\mathsf{K} = \mathbb{S}^{n}_{++} \cap \mathbb{V}$ in the
linear subspace $\mathbb{V}$ as $\bar{\mathsf{K}}^{*} = \proj_{\mathbb{V}}
\mathbb{S}^{n}_{+}$. This expression, however, does not reveal the relative
interior $\mathsf{K}^{*}$ of the dual cone $\bar{\mathsf{K}}^{*}$ as a
homogeneous convex cone.

Generally, under any given inner product on $\mathbb{V}$, the adjoint
transformations of any arbitrary transitive group of linear automorphisms
(on $\mathbb{V}$) of a homogeneous convex cone $\mathsf{K}$ in $\mathbb{V}$
form a linear automorphism group of the relative interior $\mathsf{K}^{*}$
of the dual cone of $\mathsf{K}$ (in $\mathbb{V}$) that acts transitively
on it, and hence the relative interior $\mathsf{K}^{*}$ of the dual cone is
the orbit of any arbitrary vector in the relative interior $\mathsf{K}^{*}$
under the action of this transitive group of adjoint transformations; see
\cite[Proposition~I.4.9]{Vin65}. Particularly, under the trace inner
product on $\mathbb{V}$, the identity matrix $I_{n}$ is in the relative
interior $\mathsf{K}^{*}$, and for every $\mathcal{T}_{T} \in
\mathsf{T}_{\mathsf{K}}$ and every $X, Y \in \mathbb{V}$,
\[
\left\langle \mathcal{T}_{T} X \middle| Y \right\rangle_{\mathrm{F}} =
\left\langle T^{\top} X T \middle| Y \right\rangle_{\mathrm{F}} =
\left\langle X \middle| \proj_{\mathbb{V}}\left( T Y T^{\top} \right)
\right\rangle_{\mathrm{F}},
\]
where the final equality follows from $X \in \mathbb{V}$, shows that the
adjoint transformation of each $\mathcal{T}_{T} \in
\mathsf{T}_{\mathsf{K}}$ is $Y \in \mathbb{V} \mapsto
\proj_{\mathbb{V}}\left( T Y T^{\top} \right)$. Therefore,
\[ \mathsf{T}_{\mathsf{K}^{*}} :=
\left\{\mathcal{T}^{*}_{T} : Y \in \mathbb{V} \mapsto
\proj_{\mathbb{V}}\left( T Y T^{\top} \right)\ \middle|\ T \in
\mathbb{T}_{++}\right\}
\]
is a transitive group of linear automorphisms of the relative interior
$\mathsf{K}^{*}$ of the dual cone, which can then be expressed as
\[
\mathsf{K}^{*} = \left\{\mathcal{T}^{*}_{T} I_{n} =
\proj_{\mathbb{V}}\left( T T^{\top} \right)\ \middle|\ T \in
\mathbb{T}_{++}\right\}.
\]

% ======================================================================= %
\section{Facial structure of homogeneous convex cones}
\label{section:facialstructure}
% ======================================================================= %

A face of a closed convex cone $\bar{\mathsf{K}}$ is a nonempty convex
subset of $\bar{\mathsf{K}}$ that contains both endpoints of every open
line segment in the cone with which it has a nonempty intersection. It is,
therefore, a closed convex cone. The linearity space $\bar{\mathsf{K}} \cap
-\bar{\mathsf{K}}$ of a closed convex cone $\bar{\mathsf{K}}$ is a face
called the trivial face. It is the trivial subspace when $\bar{\mathsf{K}}$
is pointed. A face of $\bar{\mathsf{K}}$ is said to be proper if it is
neither $\bar{\mathsf{K}}$ nor the trivial face. An extreme ray of
$\bar{\mathsf{K}}$ is a face of dimension $1$. A chain of proper faces of
$\bar{\mathsf{K}}$ is a sequence of faces of $\bar{\mathsf{K}}$ in which
every face is a proper face of the preceding face.

Every point of $\bar{\mathsf{K}}$ is contained in the relative interior of
exactly one of its faces, which is the smallest face of $\bar{\mathsf{K}}$
containing the point; see \cite[Theorem~18.2]{Rock97} and its proof. We
shall denote by $\mathsf{F}_{\bar{\mathsf{K}}}(x)$ the smallest face of
$\bar{\mathsf{K}}$ containing the point $x$ of $\bar{\mathsf{K}}$. We begin
with a characterization of faces of linear slices of closed convex cones;
cf.\@ \cite[Exercise~9,~Page~21]{Gru03}, \cite[Proposition~5.8]{Weis24},
and \cite[Proposition~2.13]{ShiWeis21}.

\begin{proposition}\label{proposition:facesofslices}
For a closed convex cone $\bar{\mathsf{K}}$ and a linear subspace
$\mathbb{L}$ in the same vector space, a subset $\mathsf{F}$ of
$\bar{\mathsf{K}} \cap \mathbb{L}$ is a face if and only if it is the
intersection of $\mathbb{L}$ with a face $\mathsf{F}'$ of
$\bar{\mathsf{K}}$, in which case, the face $\mathsf{F}'$ may be taken to
be the smallest face containing $\mathsf{F}$.
\end{proposition}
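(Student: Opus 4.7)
The plan is to handle the two directions separately, using the standard fact (cited in the excerpt from \cite[Theorem~18.2]{Rock97}) that every point of a closed convex cone lies in the relative interior of a unique smallest face. The ``if'' direction is straightforward, while the ``only if'' direction is obtained by choosing, for a given face $\mathsf{F}$ of the slice, the smallest face of the ambient cone containing a relative interior point of $\mathsf{F}$.

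For the ``if'' direction, suppose $\mathsf{F} = \mathsf{F}' \cap \mathbb{L}$ with $\mathsf{F}'$ a face of $\bar{\mathsf{K}}$. Any open line segment in $\bar{\mathsf{K}} \cap \mathbb{L}$ that meets $\mathsf{F}$ is in particular an open line segment in $\bar{\mathsf{K}}$ meeting $\mathsf{F}'$, so the face property of $\mathsf{F}'$ forces its endpoints into $\mathsf{F}'$; since those endpoints already lie in $\mathbb{L}$, they lie in $\mathsf{F}$.

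For the ``only if'' direction, pick $x$ in the relative interior of $\mathsf{F}$ and define $\mathsf{F}' := \mathsf{F}_{\bar{\mathsf{K}}}(x)$. The inclusion $\mathsf{F} \subseteq \mathsf{F}' \cap \mathbb{L}$ follows from the ``if'' direction (which shows $\mathsf{F}' \cap \mathbb{L}$ is a face of $\bar{\mathsf{K}} \cap \mathbb{L}$) together with the observation that $\mathsf{F}' \cap \mathbb{L}$ contains $x$ while $\mathsf{F}$ is the smallest face of $\bar{\mathsf{K}} \cap \mathbb{L}$ containing $x$. The reverse inclusion $\mathsf{F}' \cap \mathbb{L} \subseteq \mathsf{F}$ is the main obstacle. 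To handle it, take any $y \in \mathsf{F}' \cap \mathbb{L}$; since $x$ lies in the relative interior of $\mathsf{F}'$, the segment from $y$ through $x$ can be extended slightly past $x$ inside $\mathsf{F}'$, producing a point $z := \lambda x + (1-\lambda) y \in \mathsf{F}'$ for some $\lambda > 1$. Linearity of $\mathbb{L}$ places $z$ in $\mathbb{L}$, hence in $\mathsf{F}' \cap \mathbb{L} \subseteq \bar{\mathsf{K}} \cap \mathbb{L}$. Because $\lambda > 1$, the point $x = \tfrac{1}{\lambda} z + \tfrac{\lambda-1}{\lambda} y$ lies in the open segment between $y$ and $z$, so the face property of $\mathsf{F}$ applied to this open segment in $\bar{\mathsf{K}} \cap \mathbb{L}$ yields $y \in \mathsf{F}$.

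For the final ``smallest face'' claim, suppose $\mathsf{F} = \mathsf{G} \cap \mathbb{L}$ for some other face $\mathsf{G}$ of $\bar{\mathsf{K}}$. Then $x \in \mathsf{G}$, and since $\mathsf{F}' = \mathsf{F}_{\bar{\mathsf{K}}}(x)$ is by definition the smallest face of $\bar{\mathsf{K}}$ containing $x$, we conclude $\mathsf{F}' \subseteq \mathsf{G}$.
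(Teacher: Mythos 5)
Your proof is correct and follows essentially the same approach as the paper's: both pick $x$ in the relative interior of $\mathsf{F}$, set $\mathsf{F}' = \mathsf{F}_{\bar{\mathsf{K}}}(x)$, and exploit the key observation that a point of the form $x - \epsilon(y-x)$ (your $z$) automatically lies in $\mathbb{L}$, so membership in $\bar{\mathsf{K}}$ and in $\bar{\mathsf{K}} \cap \mathbb{L}$ coincide for it. The paper packages both inclusions into a single biconditional chain while you prove them separately (using the ``if'' direction plus minimality of $\mathsf{F}$ for one inclusion, the segment-extension argument for the other), and you spell out the ``smallest face'' claim that the paper leaves implicit, but the underlying argument is the same.
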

\begin{proof}
We take an arbitrary point $x$ in the relative interior of a face
$\mathsf{F}$ of $\bar{\mathsf{K}} \cap \mathbb{L}$, which is also in the
relative interior of $\mathsf{F}_{\bar{\mathsf{K}}}(x)$. For any point $y$
in the intersection $\bar{\mathsf{K}} \cap \mathbb{L} \subseteq
\bar{\mathsf{K}}$, it is in the face $\mathsf{F}$ (respectively, the face
$\mathsf{F}_{\bar{\mathsf{K}}}(x)$) if and only if there is some $\epsilon
> 0$ for which $x - \epsilon (y - x)$ is in the intersection
$\bar{\mathsf{K}} \cap \mathbb{L}$ (respectively, the cone
$\bar{\mathsf{K}}$). At the same time, the point $x - \epsilon (y - x)$
with $\epsilon > 0$ lies in the intersection $\bar{\mathsf{K}} \cap
\mathbb{L}$ if and only if it lies in the cone $\bar{\mathsf{K}}$.
Conversely, the definition of faces implies that every face of
$\bar{\mathsf{K}}$ intersects with $\mathbb{L}$ to give a face of
$\bar{\mathsf{K}} \cap \mathbb{L}$.
\end{proof}

\subsection{Faces of homogeneous convex cones}
\label{section:facialstructureprimal}
% ....................................................................... %

In \cite[\S2]{Chua06}, the author showed that faces of the positive
semidefinite cone $\mathbb{S}^{n}_{+}$ are of the form
\[
\left\{T^{\top} X T\ \middle|\ X \in \mathbb{S}^{n}_{+} \text{ and } (i
\notin \mathsf{B} \vee j \notin \mathsf{B}) \implies X_{ij} = 0 \ (1 \leq i
\leq j \leq n)\right\},
\]
where $T$ is an upper triangular matrix with positive diagonal and
$\mathsf{B}$ is a subset of $\left\{1, \dotsc, n\right\}$. Moreover, every
positive semidefinite matrix $X \in \mathbb{S}^{n}_{+}$ has a unique
Cholesky-type factorization $X = T^{\top} T$ with
\[
T \in \mathbb{T}^{n}_{+} := \left\{T \in \mathbb{R}^{n \times n}\ \middle|\
\begin{aligned}
&T \text{ is upper triangular, } T_{ii} \geq 0 \ (1 \leq i \leq n),
\\
& (T_{ii} = 0 \implies T_{ij} = 0) \ (1 \leq i < j \leq n)
\end{aligned}
\right\}
\]
and this unique Cholesky-type factorization can be obtained as the limit of
the sequence of Cholesky factorizations of the positive definite sequence
$\left( X + I_{n} / k \right)_{k=1}^\infty$. It was further shown in
\cite{Chua06} that the relative interior of the above face has the same
description as the face, but with the triangular matrix $\hat{T} \in
\mathbb{T}^{n}_{+}$, in the unique Cholesky-type factorization $X =
\hat{T}^{\top} \hat{T}$ of every $X$ in the description, having positive
diagonal entries at positions indexed by $\mathsf{B}$.

We now consider a face $\mathsf{F}$ of the closure $\bar{\mathsf{K}} =
\mathbb{S}^{n}_{+} \cap \mathbb{V}$ of any Ishi spectrahedral
representation $\mathsf{K} = \mathbb{S}^{n}_{++} \cap \mathbb{V}$ of a
homogeneous convex cone of rank $r$ and some matrix $\hat{X}$ in its
relative interior. Since the positive definite matrices in the sequence
$(\hat{X} + I_{n} / k)_{k=1}^\infty$ are in the linear subspace
$\mathbb{V}$, the triangular matrices $\hat{T}(k)$ in the Cholesky
factorizations $\hat{X} + I_{n} / k = \hat{T}(k)^{\top} \hat{T}(k)$ belong
to the linear subspace $\mathbb{T}$; hence, the triangular matrix $\hat{T}
\in \mathbb{T}^{n}_{+}$ with $\hat{X} = \hat{T}^{\top} \hat{T}$ is also in
the linear subspace $\mathbb{T}$ because it is the limit of the sequence
$(\hat{T}(k))_{k=1}^\infty$ in $\mathbb{T}$. The minimal containing face
$\mathsf{F}_{\mathbb{S}^{n}_{+}}(\hat{X} = \hat{T}^{\top} \hat{T})$ is
therefore
\[
\left\{T^{\top} X T\ \middle|\ X \in \mathbb{S}^{n}_{+} \text{ and } (i
\notin \mathsf{B} \vee j \notin \mathsf{B}) \implies X_{ij} = O_{n_{i}
\times n_{j}} \ (1 \leq i \leq j \leq r)\right\},
\]
where, here, the notation $X_{ij}$ refers to the $(i, j)$-th block
according to the block structure of $\mathbb{V}$, $\mathsf{B}$ is the index
set $\{i|\hat{T}_{ii} > 0\} \subseteq \{1, \dotsc, r\}$, and $T \in
\mathbb{T}$ is the upper triangular matrix obtained from $\hat{T}$ by
changing all zeroes on its diagonal to ones. By
Proposition~\ref{proposition:facesofslices}, the face $\mathsf{F}$ of the
closure $\bar{\mathsf{K}} = \mathbb{S}^{n}_{+} \cap \mathbb{V}$ is the
intersection of $\mathbb{V}$ with the above face. Thus, the face
$\mathsf{F}$ is
\[
\mathsf{F}_{T,\mathsf{B}} := \left\{T^{\top} X T\ \middle|\ X \in
\mathbb{S}^{n}_{+} \cap \mathbb{V}^{\mathsf{B}}\right\} = \mathcal{T}_{T}
\left( \mathbb{S}^{n}_{+} \cap \mathbb{V}^{\mathsf{B}} \right),
\]
where
\[
\mathbb{V}^{\mathsf{B}} := \left\{X \in \mathbb{V}\ \middle|\ (i \notin
\mathsf{B} \vee j \notin \mathsf{B}) \implies X_{ij} = O_{n_{i} \times
n_{j}} \ (1 \leq i \leq j \leq r)\right\};
\]
cf.\@ \cite[Proposition~18]{Chua06}.

For example, when $T =  I_{n}$ and $\mathsf{B} = \left\{i_{1}, \dotsc,
i_{p}\right\}$ with $i_{1} < \dotsb < i_{p}$ is a nonempty subset of
$\left\{1, \dotsc, r\right\}$, the resulting face
$\mathsf{F}_{I_{n},\mathsf{B}}$ is the spectrahedral cone
$\mathbb{S}^{n}_{+} \cap \mathbb{V}^{\mathsf{B}}$ whose relative interior
is a homogeneous convex cone of rank $p = \lvert \mathsf{B} \rvert$ because
it has the Ishi spectrahedral representation
$\mathbb{S}^{n_{\mathsf{B}}}_{++} \cap \mathbb{V}_{\mathsf{B}}$, where
$n_{\mathsf{B}} := n_{i_{1}} + \dotsb + n_{i_{p}}$ and
\[
\mathbb{V}_{\mathsf{B}} := \left\{
\begin{pmatrix}
X_{11} & X_{12} & \cdots & X_{1p}
\\
X_{12}^{\top} & X_{22} & \cdots & X_{2p}
\\
\vdots & \vdots & \ddots & \vdots
\\
X_{1p}^{\top} & X_{2p}^{\top} & \cdots & X_{pp}
\end{pmatrix}
\ \middle|\
\begin{aligned}
& X_{kk} \in \mathbb{V}_{kk} = \mathbb{R} I_{n_{i_{k}}} \ (1 \leq k \leq
p),
\\
& X_{kl} \in \mathbb{V}_{i_{k} i_{l}} \ (1 \leq k < l \leq p)
\end{aligned}
\right\},
\]
via the canonical linear isomorphism $\mathbb{V}^{\mathsf{B}}
\xrightarrow{\sim} \mathbb{V}_{\mathsf{B}}$ that removes the zero blocks.
Alternatively, the homogeneity of the relative interior can be observed
from the simply transitive group of linear automorphisms
\[
\mathsf{T}_{\mathsf{F}_{I_{n},\mathsf{B}}} :=
\left\{\mathcal{T}_{T}^{\mathsf{B}} : X \in \mathbb{V}^{\mathsf{B}} \mapsto
T^{\top} X T\ \middle|\ T \in \mathbb{T}_{++}^{\mathsf{B}}\right\},
\]
where $\mathbb{T}_{++}^{\mathsf{B}}$ is the subset of triangular matrices
$T \in \mathbb{T}^{n}_{+}$ with $T^{\top} T$ in the relative interior of
the face $\mathsf{F}_{I_{n},\mathsf{B}} = \mathbb{S}^{n}_{+} \cap
\mathbb{V}^{\mathsf{B}}$.

More generally, the relative interior of each nontrivial face
$\mathsf{F}_{T,\mathsf{B}} = \mathcal{T}_{T} \mathsf{F}_{I_{n},\mathsf{B}}$
is also a homogeneous convex cone of rank $\lvert \mathsf{B} \rvert$
because it is linearly isomorphic to the face
$\mathsf{F}_{I_{n},\mathsf{B}}$ via $\mathcal{T}_{T^{-1}} =
\mathcal{T}_{T}^{-1} \in \mathsf{T}_{\mathsf{K}}$, and also because it has
the transitive group of linear automorphisms
$\mathsf{T}_{\mathsf{F}_{T,\mathsf{B}}} := \mathcal{T}_{T}
\mathsf{T}_{\mathsf{F}_{I_{n},\mathsf{B}}} \mathcal{T}_{T^{-1}}$; cf.\@
\cite[Theorem~3.6]{GouItoLou25}. We shall call each face
$\mathsf{F}_{T,\mathsf{B}}$ with a homogeneous relative interior of rank $p
= \lvert \mathsf{B} \rvert$ a \emph{rank $p$ face} of the closure
$\bar{\mathsf{K}} = \mathbb{S}^{n}_{+} \cap \mathbb{V}$ and every point in
its relative interior a \emph{rank $p$ point} of the closure.

Every rank $\lvert \mathsf{B} \rvert$ face $\mathsf{F}_{T,\mathsf{B}} =
\mathcal{T}_{T}(\mathbb{S}^{n}_{+} \cap \mathbb{V}^{\mathsf{B}})$ of the
closure $\bar{\mathsf{K}}$ with $\lvert \mathsf{B} \rvert < r-1$ is a
proper face of every proper face $\mathsf{F}_{T,\mathsf{B}'} =
\mathcal{T}_{T}(\mathbb{S}^{n}_{+} \cap \mathbb{V}^{\mathsf{B}'})$ with
$\mathsf{B} \subsetneqq \mathsf{B}' \subsetneqq \left\{1, \dotsc,
r\right\}$, and is therefore not a maximal proper face of the closure
$\bar{\mathsf{K}}$. Thus, the maximal faces of the closure
$\bar{\mathsf{K}}$ are precisely the rank $r-1$ faces. Applying this
argument to every proper face, which is also the closure of a homogeneous
convex cone, shows that maximal chains of proper faces of the closure
$\bar{\mathsf{K}}$ have length $r$, hence characterizing the rank
geometrically as the length of maximal chains of proper faces.

Our description of nontrivial faces of the closure $\bar{\mathsf{K}} =
\mathbb{S}^{n}_{+} \cap \mathbb{V}$ shows that its extreme rays are rank
$1$ faces, which are of the form $\mathsf{F}_{T,\left\{i\right\}} =
\mathbb{R}_+ (T^{\top} I^{\left\{i\right\}} T)$, where $I^{\mathsf{B}}$
denotes the orthogonal projection $\proj_{\mathbb{V}^{\mathsf{B}}} I_{n}$
for any index set $\mathsf{B} \subseteq \left\{1, \dotsc, r\right\}$. So,
any matrix $X = \mathcal{T}_{T} I_{n}$ in the relative interior
$\mathbb{S}^{n}_{++} \cap \mathbb{V}$ is in the Minkowski sum of the $r$
extreme rays $\mathsf{F}_{T,\left\{1\right\}}, \dotsc,
\mathsf{F}_{T,\left\{r\right\}}$, and hence the Carath\'eodory number at
$X$ is at most $r$. Applying this argument to every proper face of the
closure $\bar{\mathsf{K}}$ shows that the Carath\'eodory number at every
rank $p$ point is at most $p \leq r$. We have thus recovered the fact that
the rank of a homogeneous convex cone is at least the Carath\'eodory number
of its closure using a facial description of its closure.

\subsection{Faces of dual cones}
\label{section:facialstructuredual}
% ....................................................................... %

Every proper face $\mathsf{F}_{T,\mathsf{B}} = \left\{T^{\top} X T\
\middle|\ X \in \mathbb{S}^{n}_{+} \cap \mathbb{V}^{\mathsf{B}}\right\}$ of
the closure $\bar{\mathsf{K}} = \mathbb{S}^{n}_{+} \cap \mathbb{V}$ of any
Ishi spectrahedral representation $\mathsf{K} = \mathbb{S}^{n}_{++} \cap
\mathbb{V}$ of a homogeneous convex cone is exposed by the hyperplane in
$\mathbb{V}$ orthogonal to the matrix $\proj_{\mathbb{V}}\left( T^{-1}
(I_{n} - I^{\mathsf{B}}) T^{-\top} \right)$ in the dual cone
$\bar{\mathsf{K}}^{*} = \proj_{\mathbb{V}} \mathbb{S}^{n}_{+}$.\footnote{In
fact, it is projectionally exposed because it is the image of the closure
$\bar{\mathsf{K}}$ under the projection $X \mapsto T^{\top}
\proj_{\mathbb{V}^{\mathsf{B}}}\left( T^{-\top} X T^{-1} \right) T$; cf.\@
\cite[Theorem~3.6]{GouItoLou25}.} Therefore, every proper face of the dual
cone $\bar{\mathsf{K}}^{*}$, whose relative interior is also a homogeneous
convex cone, is exposed by the hyperplane in its linear span $\mathbb{V}$
that is orthogonal to some nonzero matrix $X$ in the boundary
$\bar{\mathsf{K}} \setminus \mathsf{K}$. In fact, the same proper face of
the dual cone $\bar{\mathsf{K}}^{*}$ is exposed by every matrix in the
relative interior of the minimal containing face
$\mathsf{F}_{\bar{\mathsf{K}}}(X)$. The minimum containing face is the
proper face $\mathsf{F}_{\hat{T},\mathsf{B}}$ of $\bar{\mathsf{K}}$ for
some $\hat{T} \in \mathbb{T}_{++}$ and some nonempty and proper subset
$\mathsf{B}$ of $\left\{1, \dotsc, r\right\}$, and the matrix
$\mathcal{T}_{\hat{T}}I^{\mathsf{B}} = \hat{T}^{\top} I^{\mathsf{B}}
\hat{T}$ in the relative interior of $\mathsf{F}_{\hat{T},\mathsf{B}}$
exposes the proper face of the dual cone $\bar{\mathsf{K}}^{*}$ as
\[
\begin{aligned}
&\left\{\hat{Y} \in \bar{\mathsf{K}}^{*}\ \middle|\ 0 = \left\langle
\hat{Y} \middle| \mathcal{T}_{\hat{T}} I^{\mathsf{B}}
\right\rangle_{\mathrm{F}} = \left\langle \mathcal{T}^{*}_{\hat{T}} \hat{Y}
\middle| I^{\mathsf{B}} \right\rangle_{\mathrm{F}}\right\}
\\
&= \mathcal{T}^{*}_{\hat{T}^{-1}} \left\{Y \in \mathcal{T}^{*}_{\hat{T}}
\bar{\mathsf{K}}^{*} = \bar{\mathsf{K}}^{*} = \proj_{\mathbb{V}}
\mathbb{S}^{n}_{+}\ \middle|\ 0 = \left\langle Y \middle| I^{\mathsf{B}}
\right\rangle_{\mathrm{F}}\right\}
\\
&= \mathcal{T}^{*}_{T} \proj_{\mathbb{V}} \left\{Z \in \mathbb{S}^{n}_{+}\
\middle|\ 0 = \left\langle \proj_{\mathbb{V}} Z \middle| I^{\mathsf{B}}
\right\rangle_{\mathrm{F}} = \left\langle Z \middle| I^{\mathsf{B}}
\right\rangle_{\mathrm{F}}\right\} & (\text{with } T = \hat{T}^{-1})
\\
&= \mathcal{T}^{*}_{T} \proj_{\mathbb{V}^{\mathsf{N}}} \mathbb{S}^{n}_{+} &
(\text{with } \mathsf{N} = \left\{1, \dotsc, r\right\} \setminus
\mathsf{B}).
\end{aligned}
\]
This gives nontrivial faces of the dual cone $\bar{\mathsf{K}}^{*}$ as
\[
\mathsf{F}^{*}_{T,\mathsf{N}} := \left\{\proj_{\mathbb{V}}\left( T Y
T^{\top} \right)\ \middle|\ Y \in \proj_{\mathbb{V}^{\mathsf{N}}}
\mathbb{S}^{n}_{+}\right\} = \mathcal{T}^{*}_{T} \left(
\proj_{\mathbb{V}^{\mathsf{N}}} \mathbb{S}^{n}_{+} \right),
\]
where $T \in \mathbb{T}_{++}$ and $\mathsf{N}$ is a nonempty subset of
$\left\{1, \dotsc, r\right\}$.

In particular, the face $\mathsf{F}^{*}_{I_{n},\mathsf{N}}$ is
$\proj_{\mathbb{V}^{\mathsf{N}}}$ $\mathbb{S}^{n}_{+}$, which is the dual
cone of the face $\mathsf{F}_{I_{n},\mathsf{N}} = \mathbb{S}^{n}_{+} \cap
\mathbb{V}^{\mathsf{N}}$ in the linear subspace $\mathbb{V}^{\mathsf{N}}$.
Hence, its homogeneous relative interior has the simply transitive group of
linear automorphisms
\[
\mathsf{T}_{\mathsf{F}^{*}_{I_{n},\mathsf{N}}} :=
\left\{{\mathcal{T}_{T}^{\mathsf{N}}}^{*} : Y \in \mathbb{V}^{\mathsf{N}}
\mapsto \proj_{\mathbb{V}}\left( T Y T^{\top} \right)\ \middle|\ T \in
\mathbb{T}_{++}^{\mathsf{N}}\right\}
\]
and can be described as the set
\[
\left\{{\mathcal{T}_{T}^{\mathsf{N}}}^{*} I^{\mathsf{N}} =
\proj_{\mathbb{V}}\left( T T^{\top} \right)\ \middle|\ T \in
\mathbb{T}_{++}^{\mathsf{N}}\right\}.
\]
Taking the closure then gives
\[
\mathsf{F}^{*}_{I_{n},\mathsf{N}} = \left\{\proj_{\mathbb{V}}\left( T
T^{\top} \right)\ \middle|\ T \in \mathbb{T}_{+}^{\mathsf{N}}\right\}
\]
where $\mathbb{T}_{+}^{\mathsf{N}}$ denotes the closure of
$\mathbb{T}_{++}^{\mathsf{N}}$ in $\mathbb{R}^{n \times n}$.

When $\mathsf{N} = \left\{1, \dotsc, i\right\}$ for some $i \in \left\{2,
\dotsc, r\right\}$, any matrix $Y$ in the face
$\mathsf{F}^{*}_{I_{n},\mathsf{N}}$ but not the proper face
$\mathsf{F}^{*}_{I_{n},\mathsf{N}'}$, where $\mathsf{N}' = \left\{1,
\dotsc, i-1\right\}$, can be expressed as $Y = \proj_{\mathbb{V}}\left( T
T^{\top} \right)$ for some $T \in \mathbb{T}_{+}^{\mathsf{N}}$ whose
diagonal block $T_{ii}$ must be nonzero, for otherwise
$\proj_{\mathbb{V}}\left( T T^{\top} \right)$ would be in
$\mathsf{F}^{*}_{I_{n},\mathsf{N}'} = \proj_{\mathbb{V}^{\mathsf{N}'}}
\mathbb{S}^{n}_{+}$. We decompose the triangular matrix $T = U + \hat{T}$,
where $U$ is the projection of $T$ onto the linear span of
$\mathbb{T}_{+}^{\mathsf{N}'}$, so that $Y = \proj_{\mathbb{V}}\left( T
T^{\top} \right)$ decomposes into $\proj_{\mathbb{V}}\left( U U^{\top}
\right) + \proj_{\mathbb{V}}\left( \hat{T} \hat{T}^{\top} \right)$. So, any
face of $\mathsf{F}^{*}_{I_{n},\mathsf{N}}$ that contains $Y$ must contain
the average of the two matrices $\proj_{\mathbb{V}}\left( U U^{\top}
\right)$ and $\proj_{\mathbb{V}}\left( \hat{T} \hat{T}^{\top} \right)$ in
$\mathsf{F}^{*}_{I_{n},\mathsf{N}}$, and hence the matrix
$\proj_{\mathbb{V}}\left( \hat{T} \hat{T}^{\top} \right)$. If this face
further contains the proper face $\mathsf{F}^{*}_{I_{n},\mathsf{N}'}$, then
it must contain the Minkowski sum of $\mathsf{F}^{*}_{I_{n},\mathsf{N}'}$
and $\proj_{\mathbb{V}}\left( \hat{T} \hat{T}^{\top} \right)$, which in
turn contains the matrix $\proj_{\mathbb{V}}\left( (I^{\mathsf{N}'} +
\hat{T}) (I^{\mathsf{N}'} + \hat{T})^{\top} \right) =
\proj_{\mathbb{V}}\left( (I^{\mathsf{N}'}) (I^{\mathsf{N}'})^{\top} \right)
+ \proj_{\mathbb{V}}\left( \hat{T} \hat{T}^{\top} \right)$ in the relative
interior of $\mathsf{F}^{*}_{I_{n},\mathsf{N}}$, and must thus be
$\mathsf{F}^{*}_{I_{n},\mathsf{N}}$ itself. This means that no proper face
of $\mathsf{F}^{*}_{I_{n},\mathsf{N}}$ can properly contain
$\mathsf{F}^{*}_{I_{n},\mathsf{N}'}$; i.e.,
$\mathsf{F}^{*}_{I_{n},\mathsf{N}'}$ is a maximal proper face of
$\mathsf{F}^{*}_{I_{n},\mathsf{N}}$. Therefore, the chain of proper faces
$\mathsf{F}^{*}_{I_{n},\left\{1, \dotsc, r\right\}},
\mathsf{F}^{*}_{I_{n},\left\{1, \dotsc, r-1\right\}}, \dotsc,
\mathsf{F}^{*}_{I_{n},\left\{1\right\}}$ is maximal and its length $r$ is
then the rank of the homogeneous relative interior $\mathsf{K}^{*}$. This
is a geometrical evidence that the rank of the homogeneous relative
interior of the dual cone agrees with that of the homogeneous convex cone.

More generally, the nontrivial face $\mathsf{F}^{*}_{T,\mathsf{N}} =
\mathcal{T}^{*}_{T} \mathsf{F}^{*}_{I_{n},\mathsf{N}}$ has a homogeneous
relative interior with the transitive group of linear automorphisms
$\mathcal{T}^{*}_{T} \mathsf{T}_{\mathsf{F}^{*}_{I_{n},\mathsf{N}}}
\mathcal{T}^{*}_{T^{-1}}$; cf.\@ \cite[Theorem~3.11]{GouItoLou25}. So, the
face $\mathsf{F}^{*}_{T,\mathsf{N}}$ is a rank $\lvert \mathsf{N} \rvert$
face of the dual cone and, in particular, the extreme rays of the dual cone
$\bar{\mathsf{K}}^{*}$ are of the form $\mathsf{F}^{*}_{T,\left\{i\right\}}
= \mathbb{R}_+ \proj_{\mathbb{V}}\left( T I^{\left\{i\right\}} T^{\top}
\right)$.

% ======================================================================= %
\section{Carath\'eodory numbers of homogeneous convex cones}
\label{section:Caratheodorynumbers}
% ======================================================================= %

Since every extreme ray of the closure $\bar{\mathsf{K}} =
\mathbb{S}^{n}_{+} \cap \mathbb{V}$ of any Ishi spectrahedral
representation $\mathsf{K} = \mathbb{S}^{n}_{++} \cap \mathbb{V}$ of a
homogeneous convex cone is of the form $\mathsf{F}_{T,\left\{i\right\}} =
\mathbb{R}_+ (T^{\top} I^{\left\{i\right\}} T) = \mathcal{T}_{T}
\mathsf{F}_{I_{n},\left\{i\right\}}$, the collection of all extreme rays is
invariant under the action of the transitive group of linear automorphisms
$\mathsf{T}_{\mathsf{K}}$ of $\mathsf{K}$. So, the function
$\kappa_{\bar{\mathsf{K}}}(\cdot)$ mapping each matrix $X$ in
$\bar{\mathsf{K}}$ to its Carath\'eodory number
$\kappa_{\bar{\mathsf{K}}}(X)$ is invariant under the action of
$\mathsf{T}_{\mathsf{K}}$. In particular, all matrices in the relative
interiors of all faces $\mathsf{F}_{T,\mathsf{B}}$ that share the same
index set $\mathsf{B}$ share the same Carath\'eodory number because they
are in the same $\mathsf{T}_{\mathsf{K}}$-orbit as $I^{\mathsf{B}}$. We
denote this number by $\kappa_{\bar{\mathsf{K}},\mathsf{B}}$ and observe
that
\[
\kappa_{\mathsf{F}_{T,\mathsf{B}}} = \max
\left\{\kappa_{\bar{\mathsf{K}},\mathsf{B}'}\ \middle|\ \mathsf{B}'
\subseteq \mathsf{B}\right\}
\]
is invariant under the action of $\mathsf{T}_{\mathsf{K}}$.

Similarly, the extreme rays of the dual cone $\bar{\mathsf{K}}^{*} =
\proj_{\mathbb{V}} \mathbb{S}^{n}_{+}$ are of the form
$\mathsf{F}^{*}_{T,\left\{i\right\}} = \mathbb{R}_+
\proj_{\mathbb{V}}\left( T I^{\left\{i\right\}} T^{\top} \right) =
\mathcal{T}^{*}_{T} \mathsf{F}^{*}_{I_{n},\left\{i\right\}}$ and the
collection of all extreme rays of the dual cone $\bar{\mathsf{K}}^{*}$ is
invariant under the action of the transitive group of linear automorphisms
$\mathsf{T}_{\mathsf{K}^{*}}$ of its relative interior $\mathsf{K}^{*}$.
Hence, as before, all matrices in the relative interiors of all faces
$\mathsf{F}^{*}_{T,\mathsf{N}}$ of the dual cone $\bar{\mathsf{K}}^{*}$
that share the same index set $\mathsf{N}$ share the same Carath\'eodory
number, which we denote by $\kappa_{\bar{\mathsf{K}}^{*},\mathsf{N}}$, and
\[
\kappa_{\mathsf{F}^{*}_{T,\mathsf{N}}} = \max
\left\{\kappa_{\bar{\mathsf{K}}^{*},\mathsf{N}'}\ \middle|\ \mathsf{N}'
\subseteq \mathsf{N}\right\}
\]
is invariant under the action of $\mathsf{T}_{\mathsf{K}^{*}}$.

In \cite[Lemma~4.1]{GulTun98} the authors showed that if a homogeneous
convex cone is selfdual, then its rank matches the Carath\'eodory number of
its closure. On the other hand, when the homogeneous convex cone is not
selfdual, the two numbers may still agree, as is evident from the smallest
example of a non-selfdual homogeneous convex cone $\mathbb{S}^{3}_{++} \cap
\mathbb{V}$ of rank $3$, where
\[
\mathbb{V} = \left\{
\begin{pmatrix}
x_{1} & 0 & x_{4}
\\
0 & x_{2} & x_{5}
\\
x_{4} & x_{5} & x_{3}
\end{pmatrix}
\ \middle|\ x \in \mathbb{R}^5\right\}.
\]
The Carath\'eodory number of its dual cone $\proj_{\mathbb{V}}
\mathbb{S}^{3}_{+}$, however, is less than the rank of its homogeneous
relative interior. For example, the matrix $I_{3} = \proj_{\mathbb{V}}
I_{3}$ in the homogeneous relative interior is in the Minkowski sum of the
two extreme rays $\mathsf{F}^{*}_{T,\left\{3\right\}}$ and
$\mathsf{F}^{*}_{U,\left\{3\right\}}$ of the dual cone, where
\[
T =
\begin{pmatrix}
1 & 0 & 1
\\
0 & 1 & 1
\\
0 & 0 & 1
\end{pmatrix}
\quad \text{and} \quad U =
\begin{pmatrix}
1 & 0 & -1
\\
0 & 1 & -1
\\
0 & 0 & 1
\end{pmatrix}.
\]

We shall show that this example of a homogeneous convex cone with its rank
greater than the Carath\'eodory number of its closure generalizes to
homogeneous convex cones of higher ranks. Before that, since this
exposition, and those that follow in this section, rely heavily on the
products between the block components $\mathbb{V}_{ij}$ of the block matrix
form of the linear subspace $\mathbb{V}$, we first establish a list of
useful identities relating these products.

\subsection{The left and right multiplication operators}
\label{section:leftrightmultiplications}
% .  .  .  .  .  .  .  .  .  .  .  .  .  .  .  .  .  .  .  .  .  .  .  .  %

For each matrix $A_{ij} \in \mathbb{V}_{ij}$ with $1 \leq i \leq j \leq r$,
\begin{enumerate}[label=(\roman*)]
\item the property \ref{axiom:triangularproductblock} is equivalent to the
left multiplication
\[
\mathcal{L}_{A_{ij}} : X_{jl} \in \mathbb{V}_{jl} \mapsto A_{ij} X_{jl}
\]
mapping into the linear subspace $\mathbb{V}_{il}$ for every $l \in
\left\{j, \dotsc, r\right\}$ and, equivalently, the right multiplication
\[
\mathcal{R}_{A_{ij}} : X_{li} \in \mathbb{V}_{li} \mapsto X_{li} A_{ij}
\]
mapping into the linear subspace $\mathbb{V}_{lj}$ for every $l \in
\left\{1, \dotsc, i\right\}$, under which

\item the property \ref{axiom:mixedtriangularproductblock} is then
equivalent to the adjoint of $\mathcal{L}_{A_{ij}}$ on the domain
$\mathbb{V}_{jl}$ taking the form
\[
\mathcal{L}^{*}_{A_{ij}} : X_{il} \in \mathbb{V}_{il} \mapsto
\proj_{\mathbb{V}_{jl}}\left( A_{ij}^{\top} X_{il} \right) = A_{ij}^{\top}
X_{il}
\]
for every $l \in \left\{j+1, \dotsc, r\right\}$ and

\item the property \ref{axiom:selfproductblock}, via polarization, is then
equivalent to the adjoint of $\mathcal{L}_{A_{ij}}$ on the domain
$\mathbb{V}_{jj}$ taking the form
\[
\mathcal{L}^{*}_{A_{ij}} : X_{ij} \in \mathbb{V}_{ij} \mapsto
\proj_{\mathbb{V}_{jj}}\left( A_{ij}^{\top} X_{ij} \right) = \frac{1}{2}
\left( A_{ij}^{\top} X_{ij} + X_{ij}^{\top} A_{ij} \right).
\]

\end{enumerate}
So, all three properties together imply, for every $l \in \left\{j, \dotsc,
r\right\}$,
\[
\begin{split}
\mathcal{L}^{*}_{A_{ij}} \mathcal{L}_{A_{ij}} : X_{jl} \in \mathbb{V}_{jl}
&\mapsto \mathcal{L}^{*}_{A_{ij}} (A_{ij} X_{jl}) = A_{ij}^{\top} A_{ij}
X_{jl}
\\
&= \frac{1}{n_{j}} \trace\left( A_{ij}^{\top} A_{ij} \right) I_{n_{j}}
X_{jl}
= \frac{1}{n_{j}} \left\lVert A_{ij}\right\rVert_{F}^{2} X_{jl}
\end{split}
\]
which means that $\mathcal{L}_{A_{ij}}$ is an injection while its adjoint
is a surjection whenever $A_{ij}$ is nonzero.

We note that for simplicity of notation, we have introduced a slight
ambiguity in notation where each of $\mathcal{L}_{A_{ij}}$ and
$\mathcal{R}_{A_{ij}}$ is used to denote similar linear transformations
with different domains. This shall not be an issue because the domain will
be clear from the context in which they are used.

 We further observe that when studying the geometry of the
dual cone via Ishi spectrahedral representations, the orthogonal projector
that naturally appears can be subsumed in the adjoint transformation
$\mathcal{R}^{*}_{A_{ij}}$ of the right multiplication when dealing with
individual blocks. The adjoint transformations $\mathcal{L}^{*}_{A_{ij}}$
and $\mathcal{R}^{*}_{A_{ij}}$ then allow for a primal-dual symmetric
presentation of the statements and arguments on the geometry of homogeneous
convex cones and their dual cones. For aesthetic reasons\footnote{``Beauty
is the first test: there is no permanent place in the world for ugly
mathematics.'' (G.~H. Hardy, \textit{A Mathematician's Apology}, 1940)}, we
shall use the left and right multiplications and their adjoints, where
appropriate, to maintain the symmetry between mathematical statements, and
arguments, for the primal and the dual cones respectively.

The associativity of matrix multiplication means that for matrices
$(A_{ij}, B_{jk}) \in \mathbb{V}_{ij} \times \mathbb{V}_{jk}$ with $1 \leq
i \leq j \leq r$,
\[
\mathcal{L}_{A_{ij}} \mathcal{L}_{B_{jk}} X_{kl} = A_{ij} (B_{jk} X_{kl}) =
(A_{ij} B_{jk}) X_{kl} = (\mathcal{L}_{A_{ij}} B_{jk}) X_{kl} =
\mathcal{L}_{\mathcal{L}_{A_{ij}} B_{jk}} X_{kl}
\]
whenever $X_{kl} \in \mathbb{V}_{kl}$ with $l \in \left\{k, \dotsc,
r\right\}$, so that $\mathcal{L}_{A_{ij}} \mathcal{L}_{B_{jk}} =
\mathcal{L}_{\mathcal{L}_{A_{ij}} B_{jk}}$. Similarly, the associativity of
matrix multiplication also gives the following identities for matrices
$(A_{ij}, B_{jk}, C_{kl}, D_{ik}) \in \mathbb{V}_{ij} \times
\mathbb{V}_{jk} \times \mathbb{V}_{kl} \times \mathbb{V}_{ik}$ with $1 \leq
i \leq j \leq k \leq l \leq r$.
\begin{enumerate}[label={[\Alph*]},
ref={[\Alph*]}]
\item\label{identity:leftleft}%
$\mathcal{L}_{A_{ij}} \mathcal{L}_{B_{jk}} =
\mathcal{L}_{\mathcal{L}_{A_{ij}} B_{jk}}$ and, by taking the adjoint,
$\mathcal{L}^{*}_{B_{jk}} \mathcal{L}^{*}_{A_{ij}} =
\mathcal{L}^{*}_{\mathcal{L}_{A_{ij}} B_{jk}}$.

\item\label{identity:leftright}%
$\mathcal{L}_{A_{ij}} \mathcal{R}_{C_{kl}} = \mathcal{R}_{C_{kl}}
\mathcal{L}_{A_{ij}}$ and, by taking the adjoint, $\mathcal{R}^{*}_{C_{kl}}
\mathcal{L}^{*}_{A_{ij}} = \mathcal{L}^{*}_{A_{ij}}
\mathcal{R}^{*}_{C_{kl}}$.

\item\label{identity:rightright}%
$\mathcal{R}_{C_{kl}} \mathcal{R}_{B_{jk}} =
\mathcal{R}_{\mathcal{R}_{C_{kl}} B_{jk}}$ and, by taking the adjoint,
$\mathcal{R}^{*}_{B_{jk}} \mathcal{R}^{*}_{C_{kl}} =
\mathcal{R}^{*}_{\mathcal{R}_{C_{kl}} B_{jk}}$.

\item\label{identity:leftadjointleft}%
$\mathcal{L}^{*}_{A_{ij}} \mathcal{L}_{D_{ik}} =
\mathcal{L}_{\mathcal{L}^{*}_{A_{ij}} D_{ik}}$ if $j < k$ and, by taking
the adjoint, $\mathcal{L}^{*}_{D_{ik}} \mathcal{L}_{A_{ij}} =
\mathcal{L}^{*}_{\mathcal{L}^{*}_{A_{ij}} D_{ik}}$.

\item\label{identity:leftadjointright}%
$\mathcal{L}^{*}_{A_{ij}} \mathcal{R}_{C_{kl}} = \mathcal{R}_{C_{kl}}
\mathcal{L}^{*}_{A_{ij}}$ if $j < k$ and, by taking the adjoint,
$\mathcal{R}^{*}_{C_{kl}} \mathcal{L}_{A_{ij}} = \mathcal{L}_{A_{ij}}
\mathcal{R}^{*}_{C_{kl}}$.

\end{enumerate}
From the adjoint identity in \ref{identity:leftadjointright}, we can
further deduce that when $i < j$,
\[
\mathcal{R}^{*}_{B_{jk}} \mathcal{R}_{D_{ik}} X_{li}
= \mathcal{R}^{*}_{B_{jk}} \mathcal{L}_{X_{li}} D_{ik}
= \mathcal{L}_{X_{li}} \mathcal{R}^{*}_{B_{jk}} D_{ik}
= \mathcal{R}_{\mathcal{R}^{*}_{B_{jk}} D_{ik}} X_{li}
\]
for all $X_{li} \in \mathbb{V}_{li}$ with $l \in \left\{1, \dotsc,
i\right\}$, which gives the identities
\begin{enumerate}[label={[\Alph*]},
ref={[\Alph*]}]
\addtocounter{enumi}{5}%
\item\label{identity:rightadjointright}%
$\mathcal{R}^{*}_{B_{jk}} \mathcal{R}_{D_{ik}} =
\mathcal{R}_{\mathcal{R}^{*}_{B_{jk}} D_{ik}}$ if $i < j$ and, by taking
the adjoint, $\mathcal{R}^{*}_{D_{ik}} \mathcal{R}_{B_{jk}} =
\mathcal{R}^{*}_{\mathcal{R}^{*}_{B_{jk}} D_{ik}}$.

\end{enumerate}

Recall that the left multiplication $\mathcal{L}_{A_{ij}}$ and its adjoint
satisfy the identities
\begin{enumerate}[label={[\Alph*]},
ref={[\Alph*]}]
\addtocounter{enumi}{6}%
\item\label{identity:leftadjointself}%
$\mathcal{L}^{*}_{A_{ij}} \mathcal{L}_{A_{ij}} = \frac{1}{n_{j}}
\left\lVert A_{ij}\right\rVert_{F}^{2} \mathcal{I}_{\mathbb{V}_{jk}}$ and,
by polarization, $\mathcal{L}^{*}_{A_{ij}} \mathcal{L}_{B_{ij}} +
\mathcal{L}^{*}_{B_{ij}} \mathcal{L}_{A_{ij}} = \frac{2}{n_{j}}
\left\langle A_{ij} \middle| B_{ij} \right\rangle_{\mathrm{F}}
\mathcal{I}_{\mathbb{V}_{jk}}$,

\end{enumerate}
where $\mathcal{I}_{\mathbb{V}_{jk}}$ denotes the identity map on
$\mathbb{V}_{jk}$. Since the adjoint identity in \ref{identity:leftright}
implies that every $(X_{ij}, Y_{ij}) \in \mathbb{V}_{ij} \times
\mathbb{V}_{ij}$ satisfies
\[
\begin{split}
\left\langle Y_{ij} \middle| \mathcal{R}^{*}_{B_{jk}} \mathcal{R}_{B_{jk}}
X_{ij} \right\rangle_{\mathrm{F}}
&= \left\langle \mathcal{L}_{Y_{ij}} I_{n_{j}} \middle|
\mathcal{R}^{*}_{B_{jk}} \mathcal{L}_{X_{ij}} B_{jk}
\right\rangle_{\mathrm{F}}
\\
&= \left\langle I_{n_{j}} \middle| \mathcal{L}^{*}_{Y_{ij}}
\mathcal{R}^{*}_{B_{jk}} \mathcal{L}_{X_{ij}} B_{jk}
\right\rangle_{\mathrm{F}}
= \trace\left( \mathcal{R}^{*}_{B_{jk}} \mathcal{L}^{*}_{Y_{ij}}
\mathcal{L}_{X_{ij}} B_{jk} \right)
\end{split}
\]
and, similarly,
\[
\begin{split}
\left\langle Y_{ij} \middle| \mathcal{R}^{*}_{B_{jk}} \mathcal{R}_{B_{jk}}
X_{ij} \right\rangle_{\mathrm{F}}
&= \left\langle \mathcal{R}^{*}_{B_{jk}} \mathcal{R}_{B_{jk}} Y_{ij}
\middle| X_{ij} \right\rangle_{\mathrm{F}}
= \trace\left( \mathcal{R}^{*}_{B_{jk}} \mathcal{L}^{*}_{X_{ij}}
\mathcal{L}_{Y_{ij}} B_{jk} \right),
\end{split}
\]
the polarized identity in \ref{identity:leftadjointself} gives
\[
\begin{split}
\left\langle Y_{ij} \middle| \mathcal{R}^{*}_{B_{jk}} \mathcal{R}_{B_{jk}}
X_{ij} \right\rangle_{\mathrm{F}}
&= \frac{1}{2} \trace\left( \mathcal{R}^{*}_{B_{jk}}
\mathcal{L}^{*}_{Y_{ij}} \mathcal{L}_{X_{ij}} B_{jk} +
\mathcal{R}^{*}_{B_{jk}} \mathcal{L}^{*}_{X_{ij}} \mathcal{L}_{Y_{ij}}
B_{jk} \right)
\\
&= \frac{1}{2} \trace\left( \mathcal{R}^{*}_{B_{jk}} \left(
\mathcal{L}^{*}_{Y_{ij}} \mathcal{L}_{X_{ij}} + \mathcal{L}^{*}_{X_{ij}}
\mathcal{L}_{Y_{ij}} \right) B_{jk} \right)
\\
&= \frac{1}{n_{j}} \left\langle Y_{ij} \middle| X_{ij}
\right\rangle_{\mathrm{F}} \trace\left( \mathcal{R}^{*}_{B_{jk}}
\mathcal{I}_{\mathbb{V}_{jk}} B_{jk} \right)
\\
&= \frac{1}{n_{j}} \left\langle Y_{ij} \middle| X_{ij}
\right\rangle_{\mathrm{F}} \left\langle I_{n_{j}} \middle|
\mathcal{R}^{*}_{B_{jk}} B_{jk} \right\rangle_{\mathrm{F}}
\\
&= \frac{1}{n_{j}} \left\langle Y_{ij} \middle| X_{ij}
\right\rangle_{\mathrm{F}} \left\langle \mathcal{R}_{B_{jk}} I_{n_{j}}
\middle| B_{jk} \right\rangle_{\mathrm{F}}
= \frac{1}{n_{j}} \left\langle Y_{ij} \middle| X_{ij}
\right\rangle_{\mathrm{F}} \left\lVert B_{jk}\right\rVert_{F}^{2},
\end{split}
\]
which shows that $\mathcal{R}^{*}_{B_{jk}} \mathcal{R}_{B_{jk}} =
\frac{1}{n_{j}} \left\lVert B_{jk}\right\rVert_{F}^{2}
\mathcal{I}_{\mathbb{V}_{ij}}$. This gives us the identities
\begin{enumerate}[label={[\Alph*]},
ref={[\Alph*]}]
\addtocounter{enumi}{7}%
\item\label{identity:rightadjointself}%
$\mathcal{R}^{*}_{B_{jk}} \mathcal{R}_{B_{jk}} = \frac{1}{n_{j}}
\left\lVert B_{jk}\right\rVert_{F}^{2} \mathcal{I}_{\mathbb{V}_{ij}}$ and,
by polarization, $\mathcal{R}^{*}_{A_{jk}} \mathcal{R}_{B_{jk}} +
\mathcal{R}^{*}_{B_{jk}} \mathcal{R}_{A_{jk}} = \frac{2}{n_{j}}
\left\langle A_{jk} \middle| B_{jk} \right\rangle_{\mathrm{F}}
\mathcal{I}_{\mathbb{V}_{ij}}$.

\end{enumerate}

\subsection{A necessary condition for matching rank and Carath\'eodory
number}
\label{section:Caratheodorynumbersnecessary}
% ....................................................................... %

We now show that the above example of a homogeneous convex cone of rank $3$
with mismatched rank and Carath\'eodory number may be generalized to
homogeneous convex cones of higher ranks.

We consider, for any Ishi spectrahedral representation $\mathsf{K} =
\mathbb{S}^{n}_{++} \cap \mathbb{V}$ of a homogeneous convex cone of rank
$r$, three indices $1 \leq i < j < k \leq r$ and two extreme rays
$\mathsf{F}_{T,\left\{i\right\}} = \mathbb{R}_+ (T^{\top}
I^{\left\{i\right\}} T)$ and $\mathsf{F}_{U,\left\{i\right\}} =
\mathbb{R}_+ (U^{\top} I^{\left\{i\right\}} U)$ of its closure
$\bar{\mathsf{K}} = \mathbb{S}^{n}_{+} \cap \mathbb{V}$, where
\[
T_{ii} = U_{ii} = I_{n_{i}}, \quad T_{ij} = -U_{ij} = A_{ij}, \quad
\text{and} \quad T_{ik} = -U_{ik} = B_{ik}
\]
for some nonzero matrices $A_{ij} \in \mathbb{V}_{ij}$ and $B_{ik} \in
\mathbb{V}_{ik}$. The matrices $T^{\top} I^{\left\{i\right\}} T$ and
$U^{\top} I^{\left\{i\right\}} U$ have
\[
\begin{aligned}
(T^{\top} I^{\left\{i\right\}} T)_{ii} &= (U^{\top} I^{\left\{i\right\}}
U)_{ii} = I_{n_{i}}, \quad
\\
(T^{\top} I^{\left\{i\right\}} T)_{ij} &= -(U^{\top} I^{\left\{i\right\}}
U)_{ij} = A_{ij}, \quad
\\
(T^{\top} I^{\left\{i\right\}} T)_{ik} &= -(U^{\top} I^{\left\{i\right\}}
U)_{ik} = B_{ik},
\\
(T^{\top} I^{\left\{i\right\}} T)_{jj} &= (U^{\top} I^{\left\{i\right\}}
U)_{jj} = A_{ij}^{\top} A_{ij} = \mathcal{L}^{*}_{A_{ij}}
\mathcal{L}_{A_{ij}} I_{n_{j}} \overset{\ref{identity:leftadjointself}}{=}
\frac{1}{n_{j}} \left\lVert A_{ij}\right\rVert_{F}^{2} I_{n_{j}},
\\
(T^{\top} I^{\left\{i\right\}} T)_{kk} &= (U^{\top} I^{\left\{i\right\}}
U)_{kk} = B_{ik}^{\top} B_{ik} = \mathcal{L}^{*}_{B_{ik}}
\mathcal{L}_{B_{ik}} I_{n_{k}} \overset{\ref{identity:leftadjointself}}{=}
\frac{1}{n_{k}} \left\lVert B_{ik}\right\rVert_{F}^{2} I_{n_{k}},
\end{aligned}
\]
and zero blocks everywhere else. Therefore, their average is the matrix
$I^{\left\{i\right\}} + \hat{T}^{\top} I^{\left\{j,k\right\}} \hat{T}$,
where the matrix $\hat{T}$ has
\[
\begin{aligned}
\hat{T}_{jj} &= \sqrt{(T^{\top} I^{\left\{i\right\}} T)_{jj}} =
\frac{1}{\sqrt{n_{j}}} \left\lVert A_{ij}\right\rVert_{F} I_{n_{j}},
\\
\hat{T}_{jk} &= \hat{T}_{jj}^{-1} (T^{\top} I^{\left\{i\right\}}
T)_{ij}^{\top} (T^{\top} I^{\left\{i\right\}} T)_{ik} = \sqrt{n_{j}}
\left\lVert A_{ij}\right\rVert_{F}^{-1} A_{ij}^{\top} B_{ik} = \sqrt{n_{j}}
\left\lVert A_{ij}\right\rVert_{F}^{-1} \mathcal{L}^{*}_{A_{ij}} B_{ik},
\\
\hat{T}_{kk} &= \sqrt{(T^{\top} I^{\left\{i\right\}} T)_{kk} -
\hat{T}_{jk}^{\top} \hat{T}_{jk}} = \sqrt{\frac{1}{n_{k}} \left\lVert
B_{ik}\right\rVert_{F}^{2} - \frac{n_{j}}{n_{k}} \left\lVert
A_{ij}\right\rVert_{F}^{-2} \left\lVert \mathcal{L}^{*}_{A_{ij}}
B_{ik}\right\rVert_{F}^{2}} I_{n_{k}},
\end{aligned}
\]
and zero blocks everywhere else. This average is a rank $3$ point if
\[
\left\lVert A_{ij}\right\rVert_{F}^{2} \left\lVert
B_{ik}\right\rVert_{F}^{2} > n_{j} \left\lVert \mathcal{L}^{*}_{A_{ij}}
B_{ik}\right\rVert_{F}^{2},
\]
in which case the matrix
\[
\sum_{l \in \left\{1, \dots, r\right\} \setminus \left\{j,k\right\}} I^{l}
+ \hat{T}^{\top} I^{\left\{j,k\right\}} \hat{T} = \left( \sum_{l \in
\left\{1, \dots, r\right\} \setminus \left\{j,k\right\}} I^{l} + \hat{T}
\right)^{\top} \left( \sum_{l \in \left\{1, \dots, r\right\} \setminus
\left\{j,k\right\}} I^{l} + \hat{T} \right)
\]
is in the relative interior $\mathbb{S}^{n}_{++} \cap \mathbb{V}$ and also
in the Minkowski sum of the $r-1$ extreme rays
$\mathsf{F}_{T,\left\{i\right\}}$, $\mathsf{F}_{U,\left\{i\right\}}$, and
$\mathsf{F}_{I_{n},\left\{l\right\}}$ for $l \in \left\{1, \dots, r\right\}
\setminus \left\{i,j,k\right\}$, which shows that the Carath\'eodory number
$\kappa_{\bar{\mathsf{K}},\left\{1, \dotsc, r\right\}}$ at points in the
relative interior, and hence that of the closure $\bar{\mathsf{K}}$, is at
most $r-1$.

Dually, we consider, for the dual cone $\bar{\mathsf{K}}^{*} =
\proj_{\mathbb{V}} \mathbb{S}^{n}_{+}$ whose relative interior is a
homogeneous convex cone of rank $r$, three indices $1 \leq k < j < i \leq
r$ and two extreme rays $\mathsf{F}^{*}_{T,\left\{i\right\}} = \mathbb{R}_+
\proj_{\mathbb{V}}\left( T I^{\left\{i\right\}} T^{\top} \right)$ and
$\mathsf{F}^{*}_{U,\left\{i\right\}} = \mathbb{R}_+
\proj_{\mathbb{V}}\left( U I^{\left\{i\right\}} U^{\top} \right)$ of the
dual cone $\bar{\mathsf{K}}^{*}$, where
\[
T_{ii} = U_{ii} = I_{n_{i}}, \quad T_{ji} = -U_{ji} = A_{ji}, \quad
\text{and} \quad T_{ki} = -U_{ki} = B_{ki}
\]
for some nonzero matrices $A_{ji} \in \mathbb{V}_{ji}$ and $B_{ki} \in
\mathbb{V}_{ki}$. Similar to before, the average of the two matrices
$\proj_{\mathbb{V}}\left( T I^{\left\{i\right\}} T^{\top} \right)$ and
$\proj_{\mathbb{V}}\left( U I^{\left\{i\right\}} U^{\top} \right)$ is
$I^{\left\{i\right\}} + \proj_{\mathbb{V}}\left( \hat{T}
I^{\left\{j,k\right\}} \hat{T}^{\top} \right)$, where the matrix $\hat{T}$
has
\[
\begin{aligned}
\hat{T}_{jj} &= \sqrt{\proj_{\mathbb{V}_{jj}}\left( A_{ji} A_{ji}^{\top}
\right)} = \sqrt{\mathcal{R}^{*}_{A_{ji}} \mathcal{R}_{A_{ji}} I_{n_{j}}}
\overset{\ref{identity:rightadjointself}}{=} \frac{1}{\sqrt{n_{j}}}
\left\lVert A_{ji}\right\rVert_{F} I_{n_{j}},
\\
\hat{T}_{kj} &= \sqrt{n_{j}} \left\lVert A_{ji}\right\rVert_{F}^{-1}
\proj_{\mathbb{V}_{kj}}\left( B_{ki} A_{ji}^{\top} \right) = \sqrt{n_{j}}
\left\lVert A_{ji}\right\rVert_{F}^{-1} \mathcal{R}^{*}_{A_{ji}} B_{ki},
\\
\hat{T}_{kk} &\overset{\ref{identity:rightadjointself}}{=}
\sqrt{\frac{1}{n_{k}} \left\lVert B_{ki}\right\rVert_{F}^{2} -
\frac{n_{j}}{n_{k}} \left\lVert A_{ji}\right\rVert_{F}^{-2} \left\lVert
\mathcal{R}^{*}_{A_{ji}} B_{ki}\right\rVert_{F}^{2}} I_{n_{k}},
\end{aligned}
\]
and zero blocks everywhere else. This matrix is a rank $3$ point if
\[
\left\lVert A_{ji}\right\rVert_{F}^{2} \left\lVert
B_{ki}\right\rVert_{F}^{2} > n_{j} \left\lVert \mathcal{R}^{*}_{A_{ji}}
B_{ki}\right\rVert_{F}^{2},
\]
in which case the matrix
\[
\sum_{l \in \left\{1, \dots, r\right\} \setminus \left\{j,k\right\}} I^{l}
+ \proj_{\mathbb{V}}\left( \hat{T} I^{\left\{j,k\right\}} \hat{T}^{\top}
\right)
\]
is in the relative interior $\proj_{\mathbb{V}} \mathbb{S}^{n}_{++}$ and is
also in the Minkowski sum of the $r-1$ extreme rays
$\mathsf{F}^{*}_{T,\left\{i\right\}}$,
$\mathsf{F}^{*}_{U,\left\{i\right\}}$, and
$\mathsf{F}^{*}_{I_{n},\left\{l\right\}}$ for $l \in \left\{1, \dots,
r\right\} \setminus \left\{i,j,k\right\}$, which shows that the
Carath\'eodory number of the dual cone $\bar{\mathsf{K}}^{*}$ is at most
$r-1$.

\begin{proposition}\label{proposition:Caratheodorynumbersnecessary}
For a homogeneous convex cone of rank $r$, if the Carath\'eodory number of
its closure is $r$, then, in any Ishi spectrahedral representation
$\mathbb{S}^{n}_{++} \cap \mathbb{V}$ of the homogeneous cone, for all $1
\leq i < j < k \leq r$, either of the following two equivalent conditions
hold:
\begin{enumerate}[label=(\roman*)]
\item For all $A_{ij} \in \mathbb{V}_{ij}$,
\[
\left\lVert A_{ij}\right\rVert_{F}^{2} \left\lVert
B_{ik}\right\rVert_{F}^{2} = n_{j} \left\lVert \mathcal{L}^{*}_{A_{ij}}
B_{ik}\right\rVert_{F}^{2} \quad \forall B_{ik} \in \mathbb{V}_{ik},
\]
or equivalently, $\mathcal{L}_{A_{ij}} \mathcal{L}^{*}_{A_{ij}} =
\frac{1}{n_{j}} \left\lVert A_{ij}\right\rVert_{F}^{2}
\mathcal{I}_{\mathbb{V}_{ik}}$.

\item $\mathbb{V}_{ik}$ and $\mathbb{V}_{jk}$ have the same dimensions
whenever $\mathbb{V}_{ij}$ is not the trivial subspace.

\end{enumerate}
Dually, if the Carath\'eodory number of its dual cone is $r$, then, in any
Ishi spectrahedral representation $\mathbb{S}^{n}_{++} \cap \mathbb{V}$ of
the homogeneous cone, for all $1 \leq k < j < i \leq r$, either of the
following two equivalent conditions hold:
\begin{enumerate}[label=(\roman*)]
\item For all $A_{ji} \in \mathbb{V}_{ji}$,
\[
\left\lVert A_{ji}\right\rVert_{F}^{2} \left\lVert
B_{ki}\right\rVert_{F}^{2} = n_{j} \left\lVert \mathcal{R}^{*}_{A_{ji}}
B_{ki}\right\rVert_{F}^{2} \quad \forall B_{ki} \in \mathbb{V}_{ki},
\]
or equivalently, $\mathcal{R}_{A_{ji}} \mathcal{R}^{*}_{A_{ji}} =
\frac{1}{n_{j}} \left\lVert A_{ji}\right\rVert_{F}^{2}
\mathcal{I}_{\mathbb{V}_{ki}}$.

\item $\mathbb{V}_{ki}$ and $\mathbb{V}_{kj}$ have the same dimensions
whenever $\mathbb{V}_{ji}$ is not the trivial subspace.

\end{enumerate}
\end{proposition}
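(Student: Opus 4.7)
The plan is to read the proposition as the contrapositive of the construction carried out immediately before its statement, and then to upgrade the resulting pointwise equalities into the claimed operator identities. The preceding construction shows that if, for some $1 \leq i < j < k \leq r$, there exist $A_{ij} \in \mathbb{V}_{ij}$ and $B_{ik} \in \mathbb{V}_{ik}$ with $\left\lVert A_{ij}\right\rVert_{F}^{2} \left\lVert B_{ik}\right\rVert_{F}^{2} > n_{j} \left\lVert \mathcal{L}^{*}_{A_{ij}} B_{ik}\right\rVert_{F}^{2}$, then the relative interior of $\bar{\mathsf{K}}$ contains a point lying in a Minkowski sum of only $r-1$ extreme rays, forcing $\kappa_{\bar{\mathsf{K}}} < r$. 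So the hypothesis $\kappa_{\bar{\mathsf{K}}} = r$ rules out strict inequality. The reverse (non-strict) inequality is automatic: identity \ref{identity:leftadjointself} exhibits $\mathcal{L}_{A_{ij}}$ as a scaled isometry of operator norm $n_{j}^{-1/2} \left\lVert A_{ij}\right\rVert_{F}$, so $\left\lVert \mathcal{L}^{*}_{A_{ij}} B_{ik}\right\rVert_{F} \leq n_{j}^{-1/2} \left\lVert A_{ij}\right\rVert_{F} \left\lVert B_{ik}\right\rVert_{F}$ for every $B_{ik}$. Equality throughout delivers the scalar form of (i), and polarizing in $B_{ik}$ via $\left\lVert \mathcal{L}^{*}_{A_{ij}} B_{ik}\right\rVert_{F}^{2} = \left\langle B_{ik} \middle| \mathcal{L}_{A_{ij}} \mathcal{L}^{*}_{A_{ij}} B_{ik}\right\rangle_{\mathrm{F}}$ upgrades it to the operator identity $\mathcal{L}_{A_{ij}} \mathcal{L}^{*}_{A_{ij}} = n_{j}^{-1} \left\lVert A_{ij}\right\rVert_{F}^{2} \mathcal{I}_{\mathbb{V}_{ik}}$.

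To prove the equivalence of (i) and (ii), I would combine this operator identity with \ref{identity:leftadjointself}. When $A_{ij} \neq 0$, \ref{identity:leftadjointself} makes $\mathcal{L}_{A_{ij}} : \mathbb{V}_{jk} \to \mathbb{V}_{ik}$ an injection, while (i) does the same for the adjoint $\mathcal{L}^{*}_{A_{ij}} : \mathbb{V}_{ik} \to \mathbb{V}_{jk}$; two injections in opposite directions force $\dim \mathbb{V}_{ik} = \dim \mathbb{V}_{jk}$. Conversely, this dimensional equality upgrades the injection $\mathcal{L}_{A_{ij}}$ to a bijection, so \ref{identity:leftadjointself} rearranges to $\mathcal{L}^{*}_{A_{ij}} = n_{j}^{-1} \left\lVert A_{ij}\right\rVert_{F}^{2} \mathcal{L}_{A_{ij}}^{-1}$, and left-multiplying by $\mathcal{L}_{A_{ij}}$ recovers (i). The case $A_{ij} = 0$ is trivial since both sides of the identity in (i) vanish, matching the ``whenever $\mathbb{V}_{ij}$ is not the trivial subspace'' qualifier.

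The dual half is entirely parallel. The dual construction preceding the proposition supplies the analogous contrapositive with $\mathcal{R}^{*}_{A_{ji}}$ replacing $\mathcal{L}^{*}_{A_{ij}}$; identity \ref{identity:rightadjointself} plays the role of \ref{identity:leftadjointself}, furnishing both the automatic reverse inequality (bounding the operator norm of $\mathcal{R}^{*}_{A_{ji}}$) and the injection $\mathcal{R}_{A_{ji}} : \mathbb{V}_{kj} \to \mathbb{V}_{ki}$ needed in the dimension argument; and the same polarization-plus-bijection argument yields $\mathcal{R}_{A_{ji}} \mathcal{R}^{*}_{A_{ji}} = n_{j}^{-1} \left\lVert A_{ji}\right\rVert_{F}^{2} \mathcal{I}_{\mathbb{V}_{ki}}$ together with the dimensional equivalence $\dim \mathbb{V}_{ki} = \dim \mathbb{V}_{kj}$.

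The only place requiring care (rather than a genuine obstacle) is recognizing that the construction produces $\kappa_{\bar{\mathsf{K}}} \leq r-1$ only under the \emph{strict} inequality, so the pointwise equality of (i) is not merely a direct contrapositive of $\kappa_{\bar{\mathsf{K}}} = r$; the automatic Cauchy--Schwarz-type bound supplied by \ref{identity:leftadjointself} (and its dual counterpart \ref{identity:rightadjointself}) has to be combined with the contrapositive to pinch the inequality into an equality.
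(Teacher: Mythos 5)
Your proof is correct and takes essentially the same route as the paper: read the proposition as the contrapositive of the construction in Section~\ref{section:Caratheodorynumbersnecessary}, pinch the resulting inequality into an equality using the automatic Cauchy--Schwarz-type bound implied by \ref{identity:leftadjointself} (respectively \ref{identity:rightadjointself}), upgrade the scalar identity to the operator identity, and deduce the dimension equality from the pair of injections $\mathcal{L}_{A_{ij}}$ and $\mathcal{L}^{*}_{A_{ij}}$ (respectively $\mathcal{R}_{A_{ji}}$ and $\mathcal{R}^{*}_{A_{ji}}$). The only cosmetic difference is that you pass from the scalar form to the operator form via polarization and self-adjointness, whereas the paper expands $\left\lVert \left\lVert A_{ij}\right\rVert_{F}^{2} B_{ik} - n_{j}\mathcal{L}_{A_{ij}}\mathcal{L}^{*}_{A_{ij}} B_{ik}\right\rVert_{F}^{2}$ directly; the content is identical, and you are in fact a bit more explicit than the paper about why the non-strict inequality direction holds automatically.
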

\begin{proof}
In the preceding discussion, we have shown that a necessary condition, on
any Ishi spectrahedral representation $\mathsf{K} = \mathbb{S}^{n}_{++}
\cap \mathbb{V}$ of a homogeneous convex cone, for its closure and its dual
cone to have rank-matching Carath\'eodory numbers are, respectively,
\[
\forall 1 \leq i < j < k \leq r \ \forall (A_{ij}, B_{ik}) \in
\mathbb{V}_{ij} \times \mathbb{V}_{ik} \quad \left\lVert
A_{ij}\right\rVert_{F}^{2} \left\lVert B_{ik}\right\rVert_{F}^{2} = n_{j}
\left\lVert \mathcal{L}^{*}_{A_{ij}} B_{ik}\right\rVert_{F}^{2}
\]
and
\[
\forall 1 \leq k < j < i \leq r \ \forall (A_{ji}, B_{ki}) \in
\mathbb{V}_{ji} \times \mathbb{V}_{ki} \quad \left\lVert
A_{ji}\right\rVert_{F}^{2} \left\lVert B_{ki}\right\rVert_{F}^{2} = n_{j}
\left\lVert \mathcal{R}^{*}_{A_{ji}} B_{ki}\right\rVert_{F}^{2}.
\]

We further note that for every $(A_{ij}, B_{ik}) \in \mathbb{V}_{ij} \times
\mathbb{V}_{ik}$ with $1 \leq i < j < k \leq r$,
\[
\begin{split}
&\left\lVert \left\lVert A_{ij}\right\rVert_{F}^{2} B_{ik} - n_{j}
\mathcal{L}_{A_{ij}} \mathcal{L}^{*}_{A_{ij}} B_{ik}\right\rVert_{F}^{2}
\\
&=
\begin{aligned}[t]
&\left\lVert A_{ij}\right\rVert_{F}^{4} \left\lVert
B_{ik}\right\rVert_{F}^{2} - 2 n_{j} \left\lVert A_{ij}\right\rVert_{F}^{2}
\left\langle B_{ik} \middle| \mathcal{L}_{A_{ij}} \mathcal{L}^{*}_{A_{ij}}
B_{ik} \right\rangle_{\mathrm{F}}
\\
&{}+ n_{j}^2 \left\langle \mathcal{L}_{A_{ij}} \mathcal{L}^{*}_{A_{ij}}
B_{ik} \middle| \mathcal{L}_{A_{ij}} \mathcal{L}^{*}_{A_{ij}} B_{ik}
\right\rangle_{\mathrm{F}}
\end{aligned}
\\
&= \left\lVert A_{ij}\right\rVert_{F}^{4} \left\lVert
B_{ik}\right\rVert_{F}^{2} - 2 n_{j} \left\lVert A_{ij}\right\rVert_{F}^{2}
\left\lVert \mathcal{L}^{*}_{A_{ij}} B_{ik}\right\rVert_{F}^{2} + n_{j}^2
\left\langle \mathcal{L}^{*}_{A_{ij}} \mathcal{L}_{A_{ij}}
\mathcal{L}^{*}_{A_{ij}} B_{ik} \middle| \mathcal{L}^{*}_{A_{ij}} B_{ik}
\right\rangle_{\mathrm{F}}
\\
&\overset{\ref{identity:leftadjointself}}{=} \left\lVert
A_{ij}\right\rVert_{F}^{4} \left\lVert B_{ik}\right\rVert_{F}^{2} - 2 n_{j}
\left\lVert A_{ij}\right\rVert_{F}^{2} \left\lVert \mathcal{L}^{*}_{A_{ij}}
B_{ik}\right\rVert_{F}^{2} + n_{j} \left\lVert A_{ij}\right\rVert_{F}^{2}
\left\lVert \mathcal{L}^{*}_{A_{ij}} B_{ik}\right\rVert_{F}^{2}
\\
&= \left\lVert A_{ij}\right\rVert_{F}^{2} \left( \left\lVert
A_{ij}\right\rVert_{F}^{2} \left\lVert B_{ik}\right\rVert_{F}^{2} - n_{j}
\left\lVert \mathcal{L}^{*}_{A_{ij}} B_{ik}\right\rVert_{F}^{2} \right)
\end{split}
\]
shows that $\left\lVert A_{ij}\right\rVert_{F}^{2} \left\lVert
B_{ik}\right\rVert_{F}^{2} = n_{j} \left\lVert \mathcal{L}^{*}_{A_{ij}}
B_{ik}\right\rVert_{F}^{2}$ and $\left\lVert A_{ij}\right\rVert_{F}^{2}
B_{ik} = n_{j} \mathcal{L}_{A_{ij}} \mathcal{L}^{*}_{A_{ij}} B_{ik}$ are
equivalent. Similarly, for every $(A_{ji}, B_{ki}) \in \mathbb{V}_{ji}
\times \mathbb{V}_{ki}$ with $1 \leq k < j < i \leq r$,
\[
\left\lVert \left\lVert A_{ji}\right\rVert_{F}^{2} B_{ki} - n_{j}
\mathcal{R}_{A_{ji}} \mathcal{R}^{*}_{A_{ji}} B_{ki}\right\rVert_{F}^{2}
\overset{\ref{identity:rightadjointself}}{=} \left\lVert
A_{ji}\right\rVert_{F}^{2} \left( \left\lVert A_{ji}\right\rVert_{F}^{2}
\left\lVert B_{ki}\right\rVert_{F}^{2} - n_{j} \left\lVert
\mathcal{R}^{*}_{A_{ji}} B_{ki}\right\rVert_{F}^{2} \right)
\]
shows that $\left\lVert A_{ji}\right\rVert_{F}^{2} \left\lVert
B_{ki}\right\rVert_{F}^{2} = n_{j} \left\lVert \mathcal{R}^{*}_{A_{ji}}
B_{ki}\right\rVert_{F}^{2}$ and $\left\lVert A_{ji}\right\rVert_{F}^{2}
B_{ki} = n_{j} \mathcal{R}_{A_{ji}} \mathcal{R}^{*}_{A_{ji}} B_{ki}$ are
equivalent. Therefore, the above two necessary conditions are respectively
equivalent to
\[
\forall 1 \leq i < j < k \leq r \ \forall A_{ij} \in \mathbb{V}_{ij} \quad
\mathcal{L}_{A_{ij}} \mathcal{L}^{*}_{A_{ij}} = \frac{1}{n_{j}} \left\lVert
A_{ij}\right\rVert_{F}^{2} \mathcal{I}_{\mathbb{V}_{ik}}
\]
and
\[
\forall 1 \leq k < j < i \leq r \ \forall A_{ji} \in \mathbb{V}_{ji} \quad
\mathcal{R}_{A_{ji}} \mathcal{R}^{*}_{A_{ji}} = \frac{1}{n_{j}} \left\lVert
A_{ji}\right\rVert_{F}^{2} \mathcal{I}_{\mathbb{V}_{ki}},
\]
which trivially holds when, respectively, $A_{ij}$ and $A_{ji}$ are the
zero matrices. Otherwise when $A_{ij}$ (respectively, $A_{ji}$) is nonzero,
the identity \ref{identity:leftadjointself} (respectively,
\ref{identity:rightadjointself}) shows that the left multiplication
$\mathcal{L}_{A_{ij}}$ (respectively, right multiplication
$\mathcal{R}_{A_{ji}}$) is injective, and hence the necessary condition for
the closure (respectively, dual cone) is further equivalent to the linear
subspaces $\mathbb{V}_{ik}$ and $\mathbb{V}_{jk}$ (respectively,
$\mathbb{V}_{ki}$ and $\mathbb{V}_{kj}$) having the same dimensions for all
$k \in \left\{j+1, \dotsc, r\right\}$ (respectively, $k \in \left\{1,
\dotsc, i-1\right\}$).
\end{proof}

\subsection{A characterization for matching rank and Carath\'eodory number}
\label{section:Caratheodorynumberscharacterization}
% ....................................................................... %

In this section, we shall discover sufficient conditions for, respectively,
the closure of a homogeneous convex cone and its dual cone to have
rank-matching Carath\'eodory numbers, which then allows us to show that the
respective necessary conditions in the preceding section are in fact
sufficient.

\begin{proposition}\label{proposition:Caratheodorynumberssufficient}
For a homogeneous convex cone of rank $r$, if, in any Ishi spectrahedral
representation $\mathbb{S}^{n}_{++} \cap \mathbb{V}$ of the homogeneous
cone, for every $i \in \left\{1, \dotsc, r\right\}$, the orthogonal
projection of every extreme ray of the form
$\mathsf{F}_{T,\left\{i\right\}}$ onto the linear span of every face of the
form $\mathsf{F}_{I_{n},\mathsf{B}}$ with $\mathsf{B} \subseteq \left\{i+1,
\dotsc, r\right\}$ lies inside an extreme ray of the closure
$\mathbb{S}^{n}_{+} \cap \mathbb{V}$, then the Carath\'eodory number of
each nontrivial face of the closure matches the rank of its homogeneous
relative interior.

Dually, if, in any Ishi spectrahedral representation $\mathbb{S}^{n}_{++}
\cap \mathbb{V}$ of the homogeneous cone, for every $i \in \left\{1,
\dotsc, r\right\}$, the orthogonal projection of every extreme ray of the
form $\mathsf{F}^{*}_{T,\left\{i\right\}}$ onto the linear span of every
face of the form $\mathsf{F}^{*}_{I_{n},\mathsf{N}}$ with $\mathsf{N}
\subseteq \left\{1, \dotsc, i-1\right\}$ lies inside an extreme ray of the
dual cone $\proj_{\mathbb{V}} \mathbb{S}^{n}_{+}$, then the Carath\'eodory
number of each nontrivial face of the dual cone matches the rank of its
homogeneous relative interior.
\end{proposition}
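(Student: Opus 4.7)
I would prove the primal statement by induction on the rank $p$ of the face; the dual statement follows by the symmetric argument (peeling the largest rather than the smallest index and swapping the left-multiplication identities of Section~\ref{section:leftrightmultiplications} for their right-multiplication counterparts). The base case $p=1$ is immediate, since rank-$1$ faces are extreme rays. By the $\mathsf{T}_{\mathsf{K}}$-invariance of the Carath\'eodory number recorded at the start of this section, it suffices to prove $\kappa_{\bar{\mathsf{K}}}(I^{\mathsf{B}}) = |\mathsf{B}|$ for every nonempty $\mathsf{B} \subseteq \{1,\dotsc,r\}$; since the upper bound $\kappa_{\bar{\mathsf{K}}}(I^{\mathsf{B}}) \leq |\mathsf{B}|$ is already recorded in Section~\ref{section:facialstructureprimal}, only the lower bound remains.

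For the inductive step, suppose $I^{\mathsf{B}} = \sum_{k=1}^{q} c_{k} V_{k}$ is a decomposition into distinct extreme rays $V_{k} = T_{k}^{\top} I^{\{m_{k}\}} T_{k}$ of $\mathsf{F}_{I_{n},\mathsf{B}}$, where each $m_{k} \in \mathsf{B}$, and let $i_{1} = \min \mathsf{B}$. I would apply the orthogonal projection $\proj_{\mathbb{V}^{\mathsf{B} \setminus \{i_{1}\}}}$ to both sides. Since $\mathsf{B} \setminus \{i_{1}\} \subseteq \{i_{1}+1,\dotsc,r\}$, the hypothesis of the proposition implies that each type-$i_{1}$ extreme ray projects into a (possibly zero) extreme ray of $\bar{\mathsf{K}}$, while rays with $m_{k} > i_{1}$ are fixed by the projection since they are already supported on $\mathsf{B} \setminus \{i_{1}\}$. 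The projected identity decomposes $I^{\mathsf{B} \setminus \{i_{1}\}}$ in the rank-$(p-1)$ face $\mathsf{F}_{I_{n},\mathsf{B} \setminus \{i_{1}\}}$ into extreme rays of that face, and by the inductive hypothesis this requires at least $p - 1$ distinct nonzero terms.

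The extra term is extracted from the $(i_{1},\cdot)$-block identities. The $(i_{1},i_{1})$-block equation yields $\sum_{k : m_{k} = i_{1}} c_{k} T_{k,i_{1},i_{1}}^{2} = 1$, so at least one $V_{k}$ satisfies $m_{k} = i_{1}$, while the off-diagonal $(i_{1},j)$-blocks for $j \in \mathsf{B} \setminus \{i_{1}\}$ give $\sum_{k : m_{k} = i_{1}} c_{k} T_{k,i_{1},i_{1}} T_{k,i_{1},j} = 0$. If some type-$i_{1}$ ray is proportional to $I^{\{i_{1}\}}$, then it projects to zero, it is not counted in the inductive bound of $p-1$, and one obtains $q \geq 1 + (p-1) = p$ directly. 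Otherwise the off-diagonal cancellation forces at least two distinct type-$i_{1}$ rays with nonzero projections, and the Gram-matrix description of each projected ray as $T_{k,i_{1},\cdot}^{\top} T_{k,i_{1},\cdot}$ on the index set $\mathsf{B} \setminus \{i_{1}\}$, together with identity~\ref{identity:leftadjointself}, is used to prevent these rays from being absorbed under the projection into rays produced from type-$m_{k} > i_{1}$ originals, again yielding $q \geq p$.

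The hard part is precisely this last case: distinct original extreme rays can, a priori, project to the same extreme ray of $\bar{\mathsf{K}}$, so the inductive count of $p - 1$ in the projected decomposition does not by itself preclude $q = p - 1$. The hypothesis of the proposition (which forces each projection to land in a single extreme ray, not merely in a larger face) together with the block-product identities of Section~\ref{section:leftrightmultiplications} has to be used carefully to rule out such pathological collapses; once this accounting is complete, the induction closes and gives $\kappa_{\bar{\mathsf{K}}}(I^{\mathsf{B}}) = |\mathsf{B}|$ for every $\mathsf{B}$.
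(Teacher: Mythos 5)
Your overall shape matches the paper's: induct (or take a minimal counterexample) on the rank of the face, project the decomposition onto the linear span of the next-smaller face $\mathsf{F}_{I_n,\mathsf{B}'}$ with $\mathsf{B}'=\mathsf{B}\setminus\{\min\mathsf{B}\}$, and use the hypothesis that projected extreme rays land inside extreme rays of the closure. Your Case 1 (some type-$i_1$ ray is proportional to $I^{\{i_1\}}$, so projects to zero) is correct and correctly completes the count.

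The gap is in your ``otherwise'' case, and you have flagged it yourself without resolving it. Your proposed resolution --- that identity~\ref{identity:leftadjointself} and the rank-one Gram description $T_{k,i_1,\cdot}^{\top} T_{k,i_1,\cdot}$ ``prevent'' a projected type-$i_1$ ray from being absorbed into a ray of type $m_k>i_1$ --- is false. Such absorption can absolutely happen: the orthogonal projection of $T^{\top} I^{\{i_1\}} T$ onto $\mathbb{V}^{\mathsf{B}'}$ is, by the hypothesis, of the form $U^{\top} I^{\{j\}} U$ for some $j\in\mathsf{B}'$, and nothing stops this from coinciding with one of the rays already present in the decomposition with $m_k=j$. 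Since the hypothesis only controls where each projection \emph{lands}, not how many of them are \emph{distinct}, your argument stalls at $q\geq p-1$.

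The fix is exactly what the paper does: rather than trying to rule out collapses, it \emph{uses} the automorphism invariance on the decomposition itself. Having located a type-$i_1$ ray $\mathsf{F}_{T_1,\{i_1\}}$ in the decomposition, apply $\mathcal{T}_{T_1^{-1}}\in\mathsf{T}_{\mathsf{F}_{I_n,\mathsf{B}}}$ to both sides. This sends $I^{\mathsf{B}}$ to another point $X'$ still in the relative interior of $\mathsf{F}_{I_n,\mathsf{B}}$, sends the decomposition to another decomposition of $X'$ into $q$ extreme rays, and normalizes $V_1$ to $\mathbb{R}_{+}I^{\{i_1\}}$, which is orthogonal to $\mathbb{V}^{\mathsf{B}'}$ and therefore projects to zero. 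Projecting the normalized decomposition onto $\mathbb{V}^{\mathsf{B}'}$ then writes a point in the relative interior of $\mathsf{F}_{I_n,\mathsf{B}'}$ as a Minkowski sum of at most $q-1$ extreme rays (fewer, if there are collapses --- which now only strengthens the bound rather than threatening it), giving $q-1\geq p-1$ and hence $q\geq p$. This is essentially your Case 1 made universal: you can always force yourself into it by a change of frame. Once you insert this normalization, your induction closes and the two proofs become the same argument phrased slightly differently (direct induction versus minimal-counterexample contrapositive).
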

\begin{proof}
We shall prove the contrapositive by supposing that there is a nontrivial
face of the closure $\bar{\mathsf{K}} = \mathbb{S}^{n}_{+} \cap \mathbb{V}$
whose Carath\'eodory number is less than the rank of its homogeneous
relative interior. Then, there must be a face $\mathsf{F}_{T,\mathsf{B}}$
of the closure that is minimal among nontrivial faces whose Carath\'eodory
numbers are less than the ranks of their respective relative interiors;
i.e., all proper faces of $\mathsf{F}_{T,\mathsf{B}}$ have rank-matching
Carath\'eodory numbers but $\mathsf{F}_{T,\mathsf{B}}$ does not. Since the
Carath\'eodory number of $\mathsf{F}_{T,\mathsf{B}}$ is at least that of
any of its proper faces, particularly that of its maximal proper faces, it
must be at least, and hence exactly, $\lvert \mathsf{B} \rvert - 1$. Of
course, $\lvert \mathsf{B} \rvert$ is at least $2$.\footnote{It is, in
fact, at least $3$, but this fact is not needed here.} Since the
Carath\'eodory numbers of faces and the ranks of their relative interiors
are invariant under the action of the transitive group of linear
automorphisms $\mathsf{T}_{\mathsf{K}}$ of $\mathsf{K}$, we may take the
face $\mathsf{F}_{T,\mathsf{B}}$ with $T = I_{n}$. We note that the face
$\mathsf{F}_{T,\mathsf{B}} = \mathsf{F}_{I_{n},\mathsf{B}}$ satisfies the
following three properties.
\begin{enumerate}[label=(\roman*)]
\item All faces of the form $\mathsf{F}_{I_{n},\left\{i, \dotsc, r\right\}
\cap \mathsf{B}}$ are invariant under the action of the simply transitive
group of linear automorphisms $\mathsf{T}_{\mathsf{F}_{I_{n},\mathsf{B}}}$
of the face $\mathsf{F}_{I_{n},\mathsf{B}}$ because $\mathbb{V}^{\left\{i,
\dotsc, r\right\} \cap \mathsf{B}}$ is invariant under the action of
$\mathsf{T}_{\mathsf{F}_{I_{n},\mathsf{B}}}$.

\item For each face of the form $\mathsf{F}_{I_{n},\left\{i, \dotsc,
r\right\} \cap \mathsf{B}}$, the orthogonal projection of the face
$\mathsf{F}_{I_{n},\mathsf{B}}$ onto its linear span $\mathbb{V}^{\left\{i,
\dotsc, r\right\} \cap \mathsf{B}}$ is the face
$\mathsf{F}_{I_{n},\left\{i, \dotsc, r\right\} \cap \mathsf{B}}$ itself,
because principal submatrices of positive semidefinite matrices are
positive semidefinite.

\item For every proper face of the form $\mathsf{F}_{I_{n},\left\{i,
\dotsc, r\right\} \cap \mathsf{B}}$ with $i \in \left\{\min \mathsf{B}+1,
\dotsc, r\right\}$ and every extreme ray $\mathsf{F}_{U,\left\{j\right\}}$
in $\mathsf{F}_{I_{n},\mathsf{B}} \setminus \mathsf{F}_{I_{n},\left\{i,
\dotsc, r\right\} \cap \mathsf{B}}$ (so that $j \in \mathsf{B} \setminus
\left\{i, \dotsc, r\right\}$), there is a linear automorphism of
$\mathsf{F}_{I_{n},\mathsf{B}}$ in
$\mathsf{T}_{\mathsf{F}_{I_{n},\mathsf{B}}}$ that maps the extreme ray
$\mathsf{F}_{U,\left\{j\right\}}$ to the extreme ray
$\mathsf{F}_{I_{n},\left\{j\right\}}$, which is orthogonal to the proper
face $\mathsf{F}_{I_{n},\left\{i, \dotsc, r\right\} \cap \mathsf{B}}$.

\end{enumerate}

Every matrix $X$ in the relative interior of the face
$\mathsf{F}_{I_{n},\mathsf{B}}$ is in the Minkowski sum of
$\kappa_{\mathsf{F}_{I_{n},\mathsf{B}}} = \lvert \mathsf{B} \rvert - 1$
extreme rays $\mathsf{R}(1), \dotsc, \mathsf{R}(\lvert \mathsf{B} \rvert -
1)$. Moreover, being in the relative interior of the face
$\mathsf{F}_{I_{n},\mathsf{B}}$, the matrix $X$ is not in the proper face
$\mathsf{F}_{I_{n},\mathsf{B}'}$ of $\mathsf{F}_{I_{n},\mathsf{B}}$, where
$\mathsf{B}' = \mathsf{B} \setminus \left\{\min \mathsf{B}\right\} =
\left\{\min \mathsf{B}+1, \dotsc, r\right\} \cap \mathsf{B}$, so that at
least one of these extreme rays, say, $\mathsf{R}(\lvert \mathsf{B} \rvert
- 1) = \mathsf{R}(\lvert \mathsf{B}' \rvert)$, must be outside the proper
face. By applying a linear automorphism of $\mathsf{F}_{I_{n},\mathsf{B}}$
in $\mathsf{T}_{\mathsf{F}_{I_{n},\mathsf{B}}}$ that maps the extreme ray
$\mathsf{R}(\lvert \mathsf{B}' \rvert)$ to an extreme ray orthogonal to the
proper face, we may pick a matrix $X$ in the face
$\mathsf{F}_{I_{n},\mathsf{B}}$ that is also in the Minkowski sum of the
extreme rays $\mathsf{R}(1), \dotsc, \mathsf{R}(\lvert \mathsf{B}' \rvert)$
with $\mathsf{R}(\lvert \mathsf{B}' \rvert)$ being orthogonal to the proper
face $\mathsf{F}_{I_{n},\mathsf{B}'}$.

So, the orthogonal projection of $X$ onto the linear span of the proper
face $\mathsf{F}_{I_{n},\mathsf{B}'}$ is the sum of the orthogonal
projections of the $\lvert \mathsf{B}' \rvert - 1$ extreme rays
$\mathsf{R}(1), \dotsc, \mathsf{R}(\lvert \mathsf{B}' \rvert - 1)$. Since
the face $\mathsf{F}_{I_{n},\mathsf{B}}$ projects orthogonally onto the
proper face $\mathsf{F}_{I_{n},\mathsf{B}'}$ and the matrix $X$ in the
relative interior of the face $\mathsf{F}_{I_{n},\mathsf{B}}$, the
orthogonal projection of $X$ must be in the relative interior of the proper
face $\mathsf{F}_{I_{n},\mathsf{B}'}$. This means that the sum of the
orthogonal projections of the $\lvert \mathsf{B}' \rvert - 1$ extreme rays
$\mathsf{R}(1), \dotsc, \mathsf{R}(\lvert \mathsf{B}' \rvert - 1)$ onto the
linear span of the proper face $\mathsf{F}_{I_{n},\mathsf{B}'}$ is in the
relative interior of the proper face.

Since all proper faces of $\mathsf{F}_{I_{n},\mathsf{B}'}$ have
rank-matching Carath\'eodory numbers, which are then less than $\lvert
\mathsf{B}' \rvert = \kappa_{\mathsf{F}_{I_{n},\mathsf{B}'}}$, the
Carath\'eodory number at matrices in the relative interior of
$\mathsf{F}_{I_{n},\mathsf{B}'}$ must be exactly
$\kappa_{\mathsf{F}_{I_{n},\mathsf{B}'}}$. So, no matrix in the relative
interior of the proper face $\mathsf{F}_{I_{n},\mathsf{B}'}$, including the
orthogonal projection of $X$, is in the Minkowski sum of at most
$\kappa_{\mathsf{F}_{I_{n},\mathsf{B}'}} - 1 = \lvert \mathsf{B}' \rvert -
1$ extreme rays, which means that at least one of the $\lvert \mathsf{B}'
\rvert - 1$ extreme rays $\mathsf{R}(1), \dotsc, \mathsf{R}(\lvert
\mathsf{B}' \rvert - 1)$ cannot project orthogonally into an extreme ray.
Of course, every extreme ray in the proper face
$\mathsf{F}_{I_{n},\mathsf{B}'}$ trivially projects orthogonally into
itself, which is an extreme ray. So, at least one of these extreme rays not
in the proper face $\mathsf{F}_{I_{n},\mathsf{B}'}$, which must be of the
form $\mathsf{F}_{T,\left\{\min \mathsf{B}\right\}}$, cannot project
orthogonally onto the linear span of the proper face
$\mathsf{F}_{I_{n},\mathsf{B}'}$ with $\mathsf{B}' \subseteq \left\{\min
\mathsf{B}+1, \dotsc, r\right\}$ to lie inside an extreme ray.

Similarly for the dual cone $\bar{\mathsf{K}}^{*} = \proj_{\mathbb{V}}
\mathbb{S}^{n}_{+}$ of the Ishi spectrahedral representation, we pick a
face $\mathsf{F}^{*}_{I_{n},\mathsf{N}}$ that is minimal among nontrivial
faces whose Carath\'eodory numbers are less than the ranks of their
respective relative interiors, which also satisfies the same three
properties with proper faces of the form $\mathsf{F}^{*}_{I_{n},\left\{1,
\dotsc, i\right\} \cap \mathsf{N}}$ and the simply transitive group of
linear automorphisms $\mathsf{T}_{\mathsf{F}^{*}_{I_{n},\mathsf{N}}}$. We
can then prove the dual statement with a similar argument.
\end{proof}

We now use these sufficient conditions to give the characterizations of,
respectively, homogeneous convex cones and their dual cones with
rank-matching Carath\'eodory numbers.

\begin{theorem}\label{theorem:characterizationofrankmatchingcaratheodorynumber}
For every homogeneous convex cone of rank $r$, the Carath\'eodory number of
its closure is $r$ if and only if, in any Ishi spectrahedral representation
$\mathsf{K} = \mathbb{S}^{n}_{++} \cap \mathbb{V}$ of the homogeneous
convex cone, for all $1 \leq i < j < k \leq r$, $\mathbb{V}_{ik}$ and
$\mathbb{V}_{jk}$ have the same dimensions whenever $\mathbb{V}_{ij}$ is
not the trivial subspace. In this case, the Carath\'eodory number of every
nontrivial face of its closure matches the rank of its homogeneous relative
interior.

Dually, the Carath\'eodory number of its dual cone is $r$ if and only if,
in any Ishi spectrahedral representation $\mathsf{K} = \mathbb{S}^{n}_{++}
\cap \mathbb{V}$ of the homogeneous convex cone, for all $1 \leq k < j < i
\leq r$, $\mathbb{V}_{ki}$ and $\mathbb{V}_{kj}$ have the same dimensions
whenever $\mathbb{V}_{ji}$ is not the trivial subspace. In this case, the
Carath\'eodory number of every nontrivial face of its dual cone matches the
rank of its homogeneous relative interior.
\end{theorem}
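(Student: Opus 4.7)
The plan is to derive both statements from Propositions~\ref{proposition:Caratheodorynumbersnecessary} and~\ref{proposition:Caratheodorynumberssufficient}; I describe the primal half in detail, with the dual obtained by an entirely analogous argument with left multiplications $\mathcal{L}$ replaced by right multiplications $\mathcal{R}$ throughout. The forward implication is already the content of Proposition~\ref{proposition:Caratheodorynumbersnecessary}. For the reverse, assuming the dimensional condition, I aim to verify the sufficient condition of Proposition~\ref{proposition:Caratheodorynumberssufficient}: that for every extreme ray $\mathsf{F}_{T,\left\{i\right\}}$ and every $\mathsf{B} \subseteq \left\{i+1,\dotsc,r\right\}$, the orthogonal projection $P := \proj_{\mathbb{V}^{\mathsf{B}}}(T^{\top} I^{\left\{i\right\}} T)$ lies inside a single extreme ray of $\bar{\mathsf{K}}$.

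The working hypothesis is the operator reformulation established inside Proposition~\ref{proposition:Caratheodorynumbersnecessary}: the dimensional condition is equivalent to $\mathcal{L}_{A_{ij}} \mathcal{L}^{*}_{A_{ij}} = n_{j}^{-1} \left\lVert A_{ij}\right\rVert_{F}^{2} \mathcal{I}_{\mathbb{V}_{ik}}$ for all $A_{ij} \in \mathbb{V}_{ij}$ and all $1 \leq i < j < k \leq r$, or equivalently $A_{ij} A_{ij}^{\top} X_{ik} = n_{j}^{-1} \left\lVert A_{ij}\right\rVert_{F}^{2} X_{ik}$ for every $X_{ik} \in \mathbb{V}_{ik}$. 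This identity is the single algebraic input that will drive everything else.

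After normalizing $T_{ii} = I_{n_{i}}$, $P$ has $(j,k)$-block $T_{ij}^{\top} T_{ik}$ for $j,k \in \mathsf{B}$ and is zero elsewhere. If every $T_{ij}$ with $j \in \mathsf{B}$ vanishes, then $P = 0$ and there is nothing to prove. Otherwise set $l_{0} := \min \left\{j \in \mathsf{B} : T_{ij} \neq 0\right\}$ and $\alpha := n_{l_{0}}^{-1} \left\lVert T_{i l_{0}}\right\rVert_{F}^{2}$, and construct $U \in \mathbb{T}_{++}$ by choosing $U_{l_{0} l_{0}} = I_{n_{l_{0}}}$, $U_{l_{0} k} = \alpha^{-1} T_{i l_{0}}^{\top} T_{ik}$ for $k \in \mathsf{B}$ with $k > l_{0}$, zero at $l_{0}$-row columns outside $\mathsf{B}$, identity on every other diagonal block, and zero elsewhere. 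Each $U_{l_{0} k}$ lies in $\mathbb{V}_{l_{0} k}$ by \ref{axiom:mixedtriangularproductblock}, so $U \in \mathbb{T}_{++}$. I then claim $P = \alpha \cdot U^{\top} I^{\left\{l_{0}\right\}} U$, placing $P$ inside the extreme ray $\mathsf{F}_{U, \left\{l_{0}\right\}}$.

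The claim is verified block by block: the $(l_{0}, l_{0})$-block matches via \ref{axiom:selfproductblock}, and the $(l_{0}, k)$-blocks match by construction; for $l_{0} < j \leq k$ in $\mathsf{B}$ the required equality $\alpha U_{l_{0} j}^{\top} U_{l_{0} k} = T_{ij}^{\top} T_{ik}$ expands to $\alpha^{-1} T_{ij}^{\top} (T_{i l_{0}} T_{i l_{0}}^{\top}) T_{ik} = T_{ij}^{\top} T_{ik}$, which reduces to $T_{ij}^{\top} (T_{i l_{0}} T_{i l_{0}}^{\top} - \alpha I_{n_{i}}) T_{ik} = 0$ and is immediate from the operator identity applied with $A_{i l_{0}} = T_{i l_{0}}$. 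With the sufficient condition of Proposition~\ref{proposition:Caratheodorynumberssufficient} now satisfied, its conclusion yields rank-matching Carath\'eodory numbers on every nontrivial face of $\bar{\mathsf{K}}$ (including $\bar{\mathsf{K}}$ itself), establishing both assertions. The main obstacle is the bookkeeping around the construction of $U$---locating $l_{0}$, handling the case where some $T_{ij}$ vanish, and confirming that $U^{\top} I^{\{l_{0}\}} U$ is indeed supported on $\mathbb{V}^{\mathsf{B}}$---after which the algebraic identity does the rest.
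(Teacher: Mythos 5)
Your proof is correct and follows essentially the same path as the paper: necessity from Proposition~\ref{proposition:Caratheodorynumbersnecessary}, sufficiency from Proposition~\ref{proposition:Caratheodorynumberssufficient}, and the key verification that the operator identity $\mathcal{L}_{A_{ij}}\mathcal{L}^{*}_{A_{ij}} = n_j^{-1}\lVert A_{ij}\rVert_F^2 \mathcal{I}_{\mathbb{V}_{ik}}$ forces the orthogonal projection of each extreme ray onto $\mathbb{V}^{\mathsf{B}}$ to land in an extreme ray. The only difference from the paper's proof is a scaling convention in the constructed $U$ (you normalize $U_{l_0 l_0} = I_{n_{l_0}}$ and carry an external factor $\alpha$, whereas the paper absorbs $\sqrt{\alpha}$ into $U_{jj}$), which defines the same ray.
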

\begin{proof}
We have previously established in
Proposition~\ref{proposition:Caratheodorynumbersnecessary} the equivalence
of the respective dimension characterizations of rank-matching
Carath\'eodory numbers in the theorem statement to
\[
\forall 1 \leq i < j < k \leq r \ \forall A_{ij} \in \mathbb{V}_{ij} \quad
\mathcal{L}_{A_{ij}} \mathcal{L}^{*}_{A_{ij}} = \frac{1}{n_{j}} \left\lVert
A_{ij}\right\rVert_{F}^{2} \mathcal{I}_{\mathbb{V}_{ik}}
\]
and
\[
\forall 1 \leq k < j < i \leq r \ \forall A_{ji} \in \mathbb{V}_{ji} \quad
\mathcal{R}_{A_{ji}} \mathcal{R}^{*}_{A_{ji}} = \frac{1}{n_{j}} \left\lVert
A_{ji}\right\rVert_{F}^{2} \mathcal{I}_{\mathbb{V}_{ki}}.
\]
With the proofs of necessity in
Proposition~\ref{proposition:Caratheodorynumbersnecessary} and sufficiency
in Proposition~\ref{proposition:Caratheodorynumberssufficient}, it remains
to show that
\begin{enumerate}[label=(\roman*)]
\item for any $i \in \left\{1, \dotsc, r-1\right\}$, if
$\mathcal{L}_{A_{ij}} \mathcal{L}^{*}_{A_{ij}} = \frac{1}{n_{j}}
\left\lVert A_{ij}\right\rVert_{F}^{2} \mathcal{I}_{\mathbb{V}_{ik}}$ for
all $A_{ij} \in \mathbb{V}_{ij}$ with $j \in \left\{i+1, \dotsc, r\right\}$
and all $k \in \left\{j+1, \dotsc, r\right\}$, then the orthogonal
projection of any extreme ray of the form $\mathsf{F}_{T,\left\{i\right\}}$
onto the linear span of the face $\mathsf{F}_{I_{n},\mathsf{B}}$ with
$\mathsf{B} \subseteq \left\{i+1, \dotsc, r\right\}$ lies in an extreme
ray; and

\item for any $i \in \left\{2, \dotsc, r\right\}$, if $\mathcal{R}_{A_{ji}}
\mathcal{R}^{*}_{A_{ji}} = \frac{1}{n_{j}} \left\lVert
A_{ji}\right\rVert_{F}^{2} \mathcal{I}_{\mathbb{V}_{ki}}$ for all $A_{ji}
\in \mathbb{V}_{ji}$ with $j \in \left\{1, \dotsc, i-1\right\}$ and all $k
\in \left\{1, \dotsc, j-1\right\}$, then the orthogonal projection of any
extreme ray of the form $\mathsf{F}^{*}_{T,\left\{i\right\}}$ onto the
linear span of the face $\mathsf{F}^{*}_{I_{n},\mathsf{N}}$ with
$\mathsf{N} \subseteq \left\{1, \dotsc, i-1\right\}$ lies in an extreme
ray.

\end{enumerate}

To prove the first statement, we suppose that $\mathcal{L}_{A_{ij}}
\mathcal{L}^{*}_{A_{ij}} = \frac{1}{n_{j}} \left\lVert
A_{ij}\right\rVert_{F}^{2} \mathcal{I}_{\mathbb{V}_{ik}}$ for all $A_{ij}
\in \mathbb{V}_{ij}$ with $j \in \left\{i+1, \dotsc, r\right\}$ and all $k
\in \left\{j+1, \dotsc, r\right\}$ and that $\mathsf{B}$ is a nonempty
subset of $\left\{i+1, \dotsc, r\right\}$, and consider an extreme ray
$\mathsf{F}_{T,\left\{i\right\}} = \mathbb{R}_+(T^{\top}
I^{\left\{i\right\}} T)$. If the blocks $T_{ij}$ for $j \in \mathsf{B}$ are
all zero blocks, then $T^{\top} I^{\left\{i\right\}} T =
I^{\left\{i\right\}}$, and hence its orthogonal projection onto the linear
span of the face $\mathsf{F}_{I_{n},\mathsf{B}} = \mathbb{S}^{n}_{+} \cap
\mathbb{V}^{\mathsf{B}}$ is in the trivial face, which is in every extreme
ray.

Otherwise, we let $j \in \mathsf{B}$ denote the least index for which
$T_{ij}$ is a nonzero block. In this case, the orthogonal projection of
$T^{\top} I^{\left\{i\right\}} T$ is $\hat{T}^{\top} \hat{T}$, where
$\hat{T}$ has $\hat{T}_{ik} = T_{ik}$ for $k \in \left\{j, \dotsc,
r\right\} \cap \mathsf{B}$ and zero blocks everywhere else. We now show
that $\hat{T}^{\top} \hat{T} = \mathcal{T}_{U} I^{\left\{j\right\}} =
U^{\top} I^{\left\{j\right\}} U$, where $\mathcal{T}_{U} \in
\mathsf{T}_{\mathsf{K}}$ has $U_{jj} = \sqrt{n_{j}}^{-1} \left\lVert
T_{ij}\right\rVert_{F} I_{n_{j}}$, $U_{jk} = \sqrt{n_{j}} \left\lVert
T_{ij}\right\rVert_{F}^{-1} \mathcal{L}^{*}_{T_{ij}} T_{ik}$ for $k \in
\left\{j+1, \dotsc, r\right\} \cap \mathsf{B}$, and zero off-diagonal
blocks everywhere else. Indeed,
\begin{enumerate}[label=(\roman*)]
\item in the rows not indexed by $\left\{j, \dotsc, r\right\} \cap
\mathsf{B}$, both $\hat{T}^{\top} \hat{T}$ and $U^{\top}
I^{\left\{j\right\}} U$ have zero blocks;

\item the $j$-th diagonal blocks of $\hat{T}^{\top} \hat{T}$ and $U^{\top}
I^{\left\{j\right\}} U$ agree because
\[
\mathcal{L}^{*}_{U_{jj}} \mathcal{L}_{U_{jj}} I_{n_{j}} =
\frac{1}{n_{j}}\left\lVert T_{ij}\right\rVert_{F}^{2} I_{n_{j}}
\overset{\ref{identity:leftadjointself}}{=} \mathcal{L}^{*}_{T_{ij}}
\mathcal{L}_{T_{ij}} I_{n_{j}};
\]

\item for each $k \in \left\{j+1, \dotsc, r\right\} \cap \mathsf{B}$, the
$(j, k)$-th blocks of $\hat{T}^{\top} \hat{T}$ and $U^{\top}
I^{\left\{j\right\}} U$ agree because
\[
\mathcal{L}^{*}_{U_{jj}} U_{jk} = \sqrt{n_{j}}^{-1} \left\lVert
T_{ij}\right\rVert_{F} \sqrt{n_{j}} \left\lVert T_{ij}\right\rVert_{F}^{-1}
\mathcal{L}^{*}_{I_{n_{j}}} \mathcal{L}^{*}_{T_{ij}} T_{ik} =
\mathcal{L}^{*}_{T_{ij}} T_{ik};
\]
and, finally,

\item for any $k \in \left\{j+1, \dotsc, r\right\} \cap \mathsf{B}$ and any
$l \in \left\{j+1, \dotsc, k\right\} \cap \mathsf{B}$, the $(l, k)$-th
blocks of $\hat{T}^{\top} \hat{T}$ and $U^{\top} I^{\left\{j\right\}} U$
agree because
\[
\begin{split}
\mathcal{L}^{*}_{U_{jl}} U_{jk}
&= n_{j} \left\lVert T_{ij}\right\rVert_{F}^{-2}
\mathcal{L}^{*}_{\mathcal{L}^{*}_{T_{ij}} T_{il}} \mathcal{L}^{*}_{T_{ij}}
T_{ik}
\\
&\overset{\ref{identity:leftadjointleft}}{=} n_{j} \left\lVert
T_{ij}\right\rVert_{F}^{-2} \mathcal{L}^{*}_{T_{il}} \mathcal{L}_{T_{ij}}
\mathcal{L}^{*}_{T_{ij}} T_{ik}
\\
&= \mathcal{L}^{*}_{T_{il}} \mathcal{I}_{\mathbb{V}_{ik}} T_{ik}
= \mathcal{L}^{*}_{T_{il}} T_{ik},
\end{split}
\]
where the third equality follows from the hypothesis $\mathcal{L}_{A_{ij}}
\mathcal{L}^{*}_{A_{ij}} = \frac{1}{n_{j}} \left\lVert
A_{ij}\right\rVert_{F}^{2} \mathcal{I}_{\mathbb{V}_{ik}}$.

\end{enumerate}
Therefore, the orthogonal projection of $T^{\top} I^{\left\{i\right\}} T$,
and hence of the face $\mathsf{F}_{T,\left\{i\right\}}$, onto the linear
span of the face $\mathsf{F}_{I_{n},\mathsf{B}}$ is the extreme ray
$\mathsf{F}_{U,\left\{j\right\}}$.

The proof of the statement for the dual cone is similar, where, in the
nontrivial case, the orthogonal projection of $\proj_{\mathbb{V}}\left( T
I^{\left\{i\right\}} T^{\top} \right)$ is $\proj_{\mathbb{V}}\left( \hat{T}
\hat{T}^{\top} \right)$ with $\hat{T}$ having $\hat{T}_{ki} = T_{ki}$ for
$k \in \left\{1, \dotsc, j\right\} \cap \mathsf{N}$ and zero blocks
everywhere else; which can then be shown to be the same as
$\mathcal{T}^{*}_{U} I^{\left\{j\right\}} = \proj_{\mathbb{V}}\left( U
I^{\left\{j\right\}} U^{\top} \right)$, with $U \in \mathbb{T}$ having
$U_{jj} = \sqrt{n_{j}}^{-1} \left\lVert T_{ji}\right\rVert_{F} I_{n_{j}}$,
$U_{kj} = \sqrt{n_{j}} \left\lVert T_{ji}\right\rVert_{F}^{-1}
\mathcal{R}^{*}_{T_{ji}} T_{ki}$ for $k \in \left\{1, \dotsc, j-1\right\}
\cap \mathsf{N}$, and zero off-diagonal blocks everywhere else, using a
similar argument with the left multiplication operators replaced by
corresponding right multiplication operators.
\end{proof}

% ======================================================================= %
\section{Applications to selfdual homogeneous convex cones and sparse
spectrahedral cones}
\label{section:applications}
% ======================================================================= %

We now apply the above characterization of homogeneous convex cones with
rank-matching Carath\'eodory numbers to selfdual homogeneous convex cones
and sparse spectrahedral cones.

\subsection{Characterization of selfdual homogeneous convex cones}
\label{section:selfdualhomogeneouscones}
% ....................................................................... %

In this section, we shall show that among homogeneous convex cones, the
selfdual cones are precisely those whose ranks match the respective
Carath\'eodory numbers of both their closures and their dual cones.

We first consider Ishi spectrahedral representations $\mathsf{K} =
\mathbb{S}^{n}_{++} \cap \mathbb{V}$ of indecomposable homogeneous convex
cones of rank $r$. An indecomposable convex cone is one that cannot be
expressed as the direct sum\footnote{A direct sum is a Minkowski sum of
sets whose linear spans have trivial pairwise intersections.} $\mathsf{K}_1
\oplus \mathsf{K}_2$ of two nontrivial convex cones. The indecomposability
means that the linear subspace $\mathbb{V}$ cannot be decomposed into
$\mathbb{V} = \mathbb{V}_{\mathsf{I}} \oplus \mathbb{V}_{\mathsf{J}}$,
where $\mathsf{I} \cup \mathsf{J}$ is a partition of $\left\{1, \dotsc,
r\right\}$, and $\mathbb{V}_{\mathsf{I}}$ and $\mathbb{V}_{\mathsf{J}}$ are
similarly defined as $\mathbb{V}_{\mathsf{B}}$.

Theorem~\ref{theorem:characterizationofrankmatchingcaratheodorynumber}
implies that for the rank of an indecomposable Ishi spectrahedral
representation $\mathsf{K}$ to match the Carath\'eodory numbers of both its
closure $\mathbb{S}^{n}_{+} \cap \mathbb{V}$ and its dual cone
$\bar{\mathsf{K}}^{*} = \proj_{\mathbb{V}} \mathbb{S}^{n}_{+}$, it is both
necessary and sufficient that whenever $\mathbb{V}_{ij}$ with $1 \leq i < j
\leq r$ is not the trivial subspace, the linear subspaces $\mathbb{V}_{ik}$
and $\mathbb{V}_{jk}$ have the same dimensions for $k \in \left\{j+1,
\dotsc, r\right\}$ and so do the linear subspaces $\mathbb{V}_{ki}$ and
$\mathbb{V}_{kj}$ for $k \in \left\{1, \dotsc, i-1\right\}$. This necessary
and sufficient condition trivially holds when all linear subspaces
$\mathbb{V}_{ij}$ have the same dimensions. On the other hand, we shall now
show that if this necessary and sufficient condition holds, then, because
of the indecomposability, none of the linear subspaces $\mathbb{V}_{ij}$ is
the trivial subspace, and hence all of them have the same dimensions.

We shall prove the contrapositive by supposing that $\mathbb{V}_{ij}$ is
the trivial subspace for some $1 \leq i < j \leq r$. We consider the
partition $\mathsf{I} \cup \mathsf{J}$ of $\left\{1, \dotsc, r\right\}$
with
\[
\mathsf{J} =
\begin{aligned}[t]
&\left\{k \in \left\{1, \dotsc, i-1\right\}\ \middle|\ \mathbb{V}_{ki}
\text{ is the trivial subspace}\right\}
\\
&\cup \left\{k \in \left\{i+1, \dotsc, r\right\}\ \middle|\ \mathbb{V}_{ik}
\text{ is the trivial subspace}\right\}.
\end{aligned}
\]
We note that $i \notin \mathsf{J}$ and $j \in \mathsf{J}$ mean that
$\mathsf{I}$ and $\mathsf{J}$ are respectively nonempty. Since $\mathbb{V}$
is indecomposable, there must be some $k \in \mathsf{I}$ and some $l \in
\mathsf{J}$ for which $\mathbb{V}_{kl}$, or $\mathbb{V}_{lk}$ in the case
$l < k$, is not the trivial subspace. We note that $k \neq i$ because $l
\in \mathsf{J}$ means that $\mathbb{V}_{il}$, or $\mathbb{V}_{li}$ in the
case $l < i$, is the trivial subspace.
\begin{enumerate}[label=(\roman*)]
\item If $k$ and $l$ are both less than $i$, the linear subspace
$\mathbb{V}_{kl}$, or $\mathbb{V}_{lk}$ in the case $l < k$, is not the
trivial subspace, and yet $\mathbb{V}_{ki}$ and $\mathbb{V}_{li}$ have
different dimensions.

\item If $k$ and $l$ are both greater than $i$, the linear subspace
$\mathbb{V}_{kl}$, or $\mathbb{V}_{lk}$ in the case $l < k$, is not the
trivial subspace, and yet $\mathbb{V}_{ik}$ and $\mathbb{V}_{il}$ have
different dimensions.

\item If $i$ lies between $k$ and $l$, the linear subspace
$\mathbb{V}_{ki}$, or $\mathbb{V}_{ik}$ in the case $l < k$, is not the
trivial subspace, and yet $\mathbb{V}_{kl}$ and $\mathbb{V}_{il}$, or
$\mathbb{V}_{lk}$ and $\mathbb{V}_{li}$ in the case $l < k$, have different
dimensions.

\end{enumerate}

When the linear subspaces $\mathbb{V}_{ij}$ in the Ishi spectrahedral
representation $\mathbb{S}^{n}_{++} \cap \mathbb{V}$ have the same
dimensions, it was shown by Vinberg \cite[Proposition~II.2.3]{Vin65}, using
$T$-algebraic representations of homogeneous convex cones, that the
indecomposable homogeneous convex cone is selfdual\footnote{Every Ishi
spectrahedral representation and every $T$-algebraic representation of a
homogeneous convex cone are related, via a compact normal left symmetric
algebra constructed from a maximal triangular linear automorphism group of
the cone, in such a way that the dimension of each linear subspace
$\mathbb{V}_{ij}$ in the Ishi spectrahedral representation coincides with
that of the corresponding subspace in the $T$-algebra representation.}. On
the other hand, the Ishi spectrahedral representation $\mathbb{S}^{n}_{++}
\cap \mathbb{V}$ of every indecomposable selfdual homogeneous convex cone
has a constant dimension for the linear subspaces
$\mathbb{V}_{ij}$.\footnote{See \cite[\S{II}.2]{Vin65} and the preceding
footnote.} Therefore, every indecomposable homogeneous convex cone is
selfdual if and only if both its closure and its dual cone have
rank-matching Carath\'eodory numbers.

When the homogeneous convex cone is not necessarily indecomposable, it can
be uniquely expressed (up to linear isomorphism) as the direct sum
$\mathsf{K}_{1} \oplus \dotsb \oplus \mathsf{K}_{p}$ of nontrivial
indecomposable convex cones, which are its indecomposable components; see
\cite[\S4]{Hor78}. At the same time, its dual cone is linearly isomorphic
to the direct sum of the dual cones of the indecomposable components. This
can be seen by first making the linear spans of the indecomposable
components pairwise orthogonal via a change of basis, followed by taking
the intersection of the dual cones of the resulting indecomposable
components; see \cite[Corollary~16.4.2]{Rock97}. Moreover, we shall show
that the indecomposable components, and hence the relative interiors of
their dual cones, are homogeneous in the following proposition.
\begin{proposition}
Every homogeneous convex cone $\mathsf{K}$ can be uniquely expressed (up to
linear isomorphism) as the direct sum $\mathsf{K}_{1} \oplus \dotsb \oplus
\mathsf{K}_{p}$ of indecomposable components, each of which is a
homogeneous convex cone. Its dual cone $\bar{\mathsf{K}}^{*}$, which has a
homogeneous relative interior, is then linearly isomorphic to the direct
sum $\bar{\mathsf{K}}_{1}^{*} \oplus \dotsb \oplus
\bar{\mathsf{K}}_{p}^{*}$ of the dual cones of the respective
indecomposable components, each of which has a homogeneous relative
interior.
\end{proposition}
\begin{proof}
We first consider the case where the indecomposable components are pairwise
linearly isomorphic, with $\mathcal{H}_{i} : \mathsf{K}_{1} \to
\mathsf{K}_{i}$ denoting a linear isomorphism for each $i \in \{1, \dotsc,
p\}$. Horne \cite[Lemma~4.1]{Hor78} showed that the linear automorphism
group of $\mathsf{K}$ is the group of linear extensions of
\[
\sum_{i=1}^{p} x_{i} \in \bigoplus_{i=1}^{p} \mathsf{K}_{i} \mapsto
\sum_{i=1}^{p} \mathcal{H}_{\sigma(i)} \mathcal{T}_{i} \mathcal{H}_{i}^{-1}
x_{i},
\]
over all $(\sigma, \mathcal{T}_{1}, \dotsc, \mathcal{T}_{p})$ where
$\sigma$ is a permutation of the set $\{1, \dotsc, p\}$ and
$\mathcal{T}_{1}, \dotsc, \mathcal{T}_{p}$ are linear automorphisms of
$\mathsf{K}_{1}$. We denote this linear extension by $\mathcal{T}_{\sigma,
\mathcal{T}_{1}, \dotsc, \mathcal{T}_{p}}$. For each pair of points $x, y
\in \mathsf{K}_{1}$, the two points
\[
w = \sum_{i=1}^{p} \mathcal{H}_{i} x \quad \text{and} \quad z =
\sum_{i=1}^{p} \mathcal{H}_{i} y
\]
are in $\mathsf{K}$, which is homogeneous. Therefore, there is a linear
automorphism $\mathcal{T}_{\sigma, \mathcal{T}_{1}, \dotsc,
\mathcal{T}_{p}}$ mapping $w$ to $z$, which is equivalent to every one of
the linear automorphisms $\mathcal{T}_{i}$ mapping $x$ to $y$. This shows
that $\mathsf{K}_{1}$ is homogeneous.

We now turn to the general case, where we gather the linearly isomorphic
indecomposable components to express the homogeneous convex cone
$\mathsf{K}$ as the direct sum $\hat{\mathsf{K}}_{1} \oplus \dotsb \oplus
\hat{\mathsf{K}}_{q}$ of convex cones, with each convex cone
$\hat{\mathsf{K}}_{j}$ the direct sum $\hat{\mathsf{K}}_{j1} \oplus \dotsb
\oplus \hat{\mathsf{K}}_{jp_j}$ of pairwise linearly isomorphic
indecomposable convex cones (so that $p = p_1 + \dotsb + p_q$). Moreover,
this is done in such a way that the indecomposable components of different
$\hat{\mathsf{K}}_{j}$'s are not linearly isomorphic. Horne
\cite[Theorem~4.2]{Hor78} showed that the linear automorphism group of
$\mathsf{K}$ is the group of linear extensions of
\[
\sum_{j=1}^{q} x_{j} \in \bigoplus_{j=1}^{q} \hat{\mathsf{K}}_{j} \mapsto
\sum_{j=1}^{q} \mathcal{T}_{j} x_{j},
\]
over all $(\mathcal{T}_{1}, \dotsc, \mathcal{T}_{q})$ where each
$\mathcal{T}_{j}$ is a linear automorphism of the respective convex cone
$\hat{\mathsf{K}}_{j}$. We denote this linear extension by
$\mathcal{T}_{\mathcal{T}_{1}, \dotsc, \mathcal{T}_{q}}$. For each convex
cone $\hat{\mathsf{K}}_{j}$, we pick an arbitrary but fixed point $x_{i}
\in \hat{\mathsf{K}}_{i}$ for each $i \in \{1, \dotsc, q\} \setminus
\{j\}$, so that for each pair of points $x, y \in \hat{\mathsf{K}}_{j}$,
the two points
\[
w = x + \sum_{i \in \{1, \dotsc, q\} \setminus \{j\}} x_{i} \quad
\text{and} \quad z = y + \sum_{i \in \{1, \dotsc, q\} \setminus \{j\}}
x_{i}
\]
are in $\mathsf{K}$, which is homogeneous. Therefore, there is a linear
automorphism $\mathcal{T}_{\mathcal{T}_{1}, \dotsc, \mathcal{T}_{q}}$
mapping $w$ to $z$, which means that the linear automorphism
$\mathcal{T}_{j}$ maps $x$ to $y$. This shows that the convex cone
$\hat{\mathsf{K}}_{j}$ is homogeneous. Consequently, each indecomposable
component of each homogeneous convex cone $\hat{\mathsf{K}}_{j}$ is
homogeneous.
\end{proof}
When a homogenous convex cone is expressed as the direct sum of
homogeneous convex cones, the Carath\'eodory number of its closure, and its
rank, are related to those of the component cones as follows.
\begin{proposition}
If a convex cone $\mathsf{K}$ is the direct sum $\mathsf{K}_{1} \oplus
\mathsf{K}_{2}$ of two nontrivial convex cones, then faces of its closure
$\bar{\mathsf{K}}$ are precisely direct sums of faces of the respective
closures $\bar{\mathsf{K}}_{1}$ and $\bar{\mathsf{K}}_{2}$ of the convex
cones $\mathsf{K}_{1}$ and $\mathsf{K}_{2}$.

Moreover, the Carath\'eodory number of the closure $\bar{\mathsf{K}}$ is
the sum of the Carath\'eodory numbers of the closures
$\bar{\mathsf{K}}_{1}$ and $\bar{\mathsf{K}}_{2}$.

If, in addition, the convex cones $\mathsf{K}$, $\mathsf{K}_{1}$, and
$\mathsf{K}_{2}$ are homogeneous, then the rank of the homogeneous convex
cone $\mathsf{K}$ is the sum of the ranks of the homogenous convex cones
$\mathsf{K}_{1}$ and $\mathsf{K}_{2}$.
\end{proposition}
\begin{proof}
In \cite[Lemma~3.2]{LouNi26}, the authors proved that a convex subset of
the Minkowski sum of the two closed convex cones is a face only if it is
the Minkowski sum of faces of the respective convex cones. When the
Minkowski sum is a direct sum, the converse follows directly from the
definition of faces.

In particular, extreme rays of the closure $\bar{\mathsf{K}}$ are precisely
extreme rays of the closures $\bar{\mathsf{K}}_{1}$ and
$\bar{\mathsf{K}}_{2}$. Therefore, every point $x = x_{1} + x_{2}$ in the
closure $\bar{\mathsf{K}}$, with $(x_{1}, x_{2}) \in \bar{\mathsf{K}_{1}}
\times \bar{\mathsf{K}_{2}}$, is in the Minkowski sum of $\kappa$ extreme
rays of the closure $\bar{\mathsf{K}}$ if and only if, for some
$\kappa_{1}$ and $\kappa_{2}$ summing up to $\kappa$, each $x_{i}$ is in
the Minkowski sum of $\kappa_i$ extreme rays of the closure
$\bar{\mathsf{K}}_{i}$.

The above characterization of faces of the closure $\bar{\mathsf{K}}$ as
direct sum of faces also means that if
\[
\mathsf{F}_{1}(1) \subsetneqq \dotsb \subsetneqq \mathsf{F}_{1}(r_{1})\ (=
\bar{\mathsf{K}}_{1}) \quad \text{and} \quad \mathsf{F}_{2}(1) \subsetneqq
\dotsb \subsetneqq \mathsf{F}_{2}(r_{2})\ (= \bar{\mathsf{K}}_{2})
\]
are maximal chains of proper faces of the respective closures
$\bar{\mathsf{K}}_{1}$ and $\bar{\mathsf{K}}_{2}$, then
\[
\mathsf{F}_{1}(1) \subsetneqq \dotsb \subsetneqq \mathsf{F}_{1}(r_{1})
\subsetneqq \mathsf{F}_{1}(r_{1}) \oplus \mathsf{F}_{2}(1) \subsetneqq
\dotsb \subsetneqq \mathsf{F}_{1}(r_{1}) \oplus \mathsf{F}_{2}(r_2)\ (=
\bar{\mathsf{K}}_{1} \oplus \bar{\mathsf{K}}_{2} = \bar{\mathsf{K}})
\]
is a maximal chain of proper faces of the closure $\bar{\mathsf{K}}$. When
the convex cones $\mathsf{K}$, $\mathsf{K}_{1}$, and $\mathsf{K}_{2}$ are,
in addition, homogeneous, the geometrical characterization of the rank of a
homogeneous convex cone, as the length of a maximal chain of proper faces,
then gives the ranks of the homogeneous convex cones $\mathsf{K}$,
$\mathsf{K}_{1}$, and $\mathsf{K}_{2}$ as $r_{1} + r_{2}$, $r_{1}$, and
$r_{2}$, respectively.
\end{proof}

With these two propositions, we may apply the above argument to each of
the indecomposable components of a homogeneous convex cone to conclude that

\begin{theorem}\label{theorem:selfdualhomogeneouscone}
The rank of a homogeneous convex cone matches the Carath\'eodory numbers of
both its closure and its dual cone if and only if it is the direct sum of
indecomposable selfdual homogeneous convex cones; i.e., it is a selfdual
homogeneous convex cone.
\end{theorem}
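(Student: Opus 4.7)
The plan is to reduce to the indecomposable case, which the preceding paragraphs have already settled, via additivity of the three invariants (rank, Carath\'eodory number of the closure, Carath\'eodory number of the dual) under direct-sum decomposition of a homogeneous convex cone into its indecomposable components. First I would verify this additivity. Every homogeneous convex cone decomposes uniquely as a direct sum of indecomposable homogeneous convex cones, and in a compatible Ishi spectrahedral representation, the rank is the sum of the ranks of the summands. The Carath\'eodory number of the closure is also additive, because the extreme rays of a direct sum are precisely the embedded extreme rays of the summands, so a minimal decomposition of a point into extreme rays splits componentwise. The same reasoning, applied to the dual (noting that the dual of a direct sum is the direct sum of the duals), handles the Carath\'eodory number of the dual cone.

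For the ``if'' direction, suppose $\mathsf{K}$ is a direct sum of indecomposable selfdual homogeneous convex cones. The preceding discussion cites Vinberg's $T$-algebraic characterization to show that each such summand admits an Ishi spectrahedral representation in which all off-diagonal subspaces $\mathbb{V}_{ij}$ have the same dimension. This constant-dimension condition vacuously verifies both criteria of Theorem~\ref{theorem:characterizationofrankmatchingcaratheodorynumber}, so each summand has rank equal to the Carath\'eodory numbers of both its closure and its dual cone. By additivity, the same equalities hold for $\mathsf{K}$.

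For the ``only if'' direction, suppose the rank of $\mathsf{K}$ matches both Carath\'eodory numbers. By additivity, each indecomposable component inherits this property. Applying Theorem~\ref{theorem:characterizationofrankmatchingcaratheodorynumber} to each component yields the two dimension-matching conditions on its Ishi representation. The contrapositive argument laid out immediately before the theorem statement then shows that no off-diagonal subspace $\mathbb{V}_{ij}$ in such an indecomposable component can be trivial: a single trivial block would generate the partition $\mathsf{I} \cup \mathsf{J}$ and lead to a dimension mismatch, contradicting indecomposability. Hence all off-diagonal subspaces have the same dimension, and Vinberg's result identifies each indecomposable component as a selfdual homogeneous convex cone. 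Thus $\mathsf{K}$ is a direct sum of indecomposable selfdual cones, and is therefore itself selfdual.

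The main obstacle I expect is not logical depth but bookkeeping of the decomposition: one must ensure that the indecomposable decomposition of $\mathsf{K}$ is realized at the level of Ishi spectrahedral representations, i.e., that the block partition of $\mathbb{V}$ can be reordered so that each indecomposable summand corresponds to a contiguous block of indices $\{1,\dotsc,r\}$. Without such compatibility, the statement of Theorem~\ref{theorem:characterizationofrankmatchingcaratheodorynumber} applied to an individual summand, and the additivity of the three invariants, could become awkward to phrase; with it, both steps become transparent and the remainder of the proof is a direct appeal to the already-established indecomposable case.
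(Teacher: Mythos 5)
Your proposal matches the paper's argument essentially step for step: reduce to the indecomposable case by additivity of rank and both Carath\'eodory numbers over the direct-sum decomposition, then apply Theorem~\ref{theorem:characterizationofrankmatchingcaratheodorynumber} together with the contrapositive partition argument to show no off-diagonal block can be trivial, and finally invoke Vinberg's constant-dimension criterion to identify each indecomposable component as selfdual. The ``bookkeeping'' concern you flag is handled implicitly by the observation in Section~\ref{section:facialstructureprimal} that each face $\mathsf{F}_{I_{n},\mathsf{B}}$ carries a canonical Ishi spectrahedral representation on the index set $\mathsf{B}$ after removing zero blocks, so each indecomposable summand inherits one without any need for contiguity of indices.
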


\subsection{Characterization of homogeneous sparse spectrahedral cones}
\label{section:sparsespectrahedralcones}
% ....................................................................... %

A block matrix spectrahedral cone $\mathbb{S}^{n}_{++} \cap \mathbb{V}$
with $n_1 = \dotsb = n_r = 1$ is the cone of real symmetric positive
definite matrices in $\mathbb{S}^{n}_{++}$ with a prescribed sparsity
pattern, where the dimensions of the linear subspaces $\mathbb{V}_{ij}$
specify the zero entries. We shall call this a \emph{sparse spectrahedral
cone}. For a sparse spectrahedral cone $\mathbb{S}^{n}_{++} \cap
\mathbb{V}$, we denote by $\hat{E}$ the set of pairs of indices $(i, j) \in
\left\{1, \dotsc, n\right\}^{2}$ for which $i < j$ and $\mathbb{V}_{ij} =
\mathbb{R}$. In terms of the set $\hat{E}$, the property
\ref{axiom:triangularproductblock} is
\begin{equation}
\label{property:trivialperfectelimination}
(i, k) \in \hat{E} \quad \forall (i, j), (j, k) \in \hat{E},
\end{equation}
and the property \ref{axiom:mixedtriangularproductblock} is
\begin{equation}
\label{property:perfectelimination}
(j, k) \in \hat{E} \quad \forall (i, j), (i, k) \in \hat{E} \text{ with } j
< k.
\end{equation}
Of course, the property \ref{axiom:selfproductblock} is immediately
satisfied because $n_{1} = \dotsb = n_{r} = 1$. So, a sparse spectrahedral
cone is also a homogeneous convex cone when it satisfies the above two
properties \eqref{property:trivialperfectelimination} and
\eqref{property:perfectelimination}. It is natural to ask if, up to linear
isomorphism, the two properties  \eqref{property:trivialperfectelimination}
and \eqref{property:perfectelimination} are also necessary for a sparse
spectrahedral cone to be a homogeneous convex cone.

Every sparse spectrahedral cone can be combinatorially described by a
simple undirected graph $G = (V, E)$ with $n = \lvert V \rvert$ vertices
labelled $1, \dotsc, n$ and edge set $E$ containing the doubletons
$\left\{i, j\right\} \subseteq \left\{1, \dotsc, n\right\}$ for which $i
\neq j$ and the $(i, j)$-th entry is allowed to be nonzero; i.e., $E =
\left\{\left\{i, j\right\}\ \middle|\ (i, j) \in \hat{E}\right\}$. This
graph describes the sparse spectrahedral cone as $\mathsf{K}(G) :=
\mathbb{S}^{n}_{++} \cap \mathbb{V}(G)$, where
\[
\mathbb{V}(G) := \left\{X \in \mathbb{S}^{n}\ \middle|\ X_{ij} = 0 \quad
\forall i \neq j, \{i, j\} \notin E\right\}.
\]
Of course, isomorphic graphs describe linearly isomorphic sparse
spectrahedral cones.

So, a sparse spectrahedral cone $\mathsf{K} = \mathbb{S}^{n}_{++} \cap
\mathbb{V}$, which can be described by a graph $G$ with edge set $E =
\left\{\left\{i, j\right\}\ \middle|\ (i, j) \in \hat{E}\right\}$, is a
homogeneous convex cone if the graph $G$ satisfies similar properties
induced by \eqref{property:trivialperfectelimination} and
\eqref{property:perfectelimination}. Simple undirected graphs with a
suitable labelling of the vertices satisfying the property induced by
\eqref{property:perfectelimination} are said to be chordal. These graphs
are also characterized by not having cycles $C_{k}$ of length $k \geq 4$ as
induced subgraphs on $k$ vertices. Simple undirected graphs with a suitable
labelling of the vertices satisfying, in addition, the property induced by
\eqref{property:trivialperfectelimination} are said to be homogeneous
chordal. These graphs are further characterized by not having $4$-paths
$P_{4}$ as induced subgraphs on four vertices. This means that they are
characterized by the absence of $P_{4}$ and $C_{4}$ as induced subgraphs
because every induced cycle of length $k > 4$ necessarily contains an
induced 4-path. For further details, see \cite[\S2 and \S3]{TunVan23}.

When a sparse spectrahedral cone $\mathsf{K}(G) = \mathbb{S}^{n}_{++} \cap
\mathbb{V}$, where $\mathbb{V} = \mathbb{V}(G)$, is described by a simple
undirected graph $G$ and $H$ is an induced subgraph of $G$ induced by the
vertices $i_{1}, \dotsc, i_{p}$, the face $\mathsf{F}_{I_{n},\left\{i_{1},
\dotsc, i_{p}\right\}} = \mathbb{S}^{n}_{+} \cap \mathbb{V}^{\left\{i_{1},
\dotsc, i_{p}\right\}}$ of the closure $\bar{\mathsf{K}}(G) =
\mathbb{S}^{n}_{+} \cap \mathbb{V}$ is linearly isomorphic to the sparse
spectrahedral cone $\mathsf{K}(H) = \mathbb{S}^{p}_{++} \cap \mathbb{V}(H)$
described by the graph $H$; see
Section~\ref{section:facialstructureprimal}. Therefore, since faces of
closures of homogeneous convex cones have homogeneous relative interiors,
if the sparse spectrahedral cone $\mathsf{K}(G)$ is also a homogeneous
convex cone, then so is the sparse spectrahedral cone $\mathsf{K}(H)$
described by any induced subgraph $H$ of $G$. This result was previously
proved by Ishi \cite[Theorem~7]{Ishi13} using compact normal left symmetric
algebras associated with homogeneous convex cones; cf.\@
\cite[Theorem~4.2]{GouItoLou25}.

We shall use the characterization of homogeneous convex cones with
rank-matching Carath\'eodory numbers in
Theorem~\ref{theorem:characterizationofrankmatchingcaratheodorynumber} to
show that neither of $\mathsf{K}(P_{4})$ nor $\mathsf{K}(C_{4})$, which are
indecomposable sparse spectrahedral cones of order $4$, is a homogeneous
convex cone, and hence any graph $G$ that describes a sparse spectrahedral
cone $\mathsf{K}(G)$ that is also a homogeneous convex cone must be
homogeneous chordal. This characterization of homogeneous sparse
spectrahedral cones was stated without proof in
\cite[Theorem~2.2]{LetMas07}, and subsequently proved by Ishi
\cite[Theorem~A]{Ishi13}, who showed that the sparse spectrahedral cones
$\mathsf{K}(P_{4})$ and $\mathsf{K}(C_{4})$ cannot be associated with any
compact normal left symmetric algebra. Our geometrical approach here uses
the Carath\'eodory numbers and the facial structure of sparse spectrahedral
cones, which complements the algebraic approach by Ishi. Let us begin with
a note on the Carath\'eodory numbers of sparse spectrahedral cones that are
also homogeneous convex cones, which, as far as we know, is a new result.

\begin{theorem}\label{theorem:sparsespectrahedralcone}
If a sparse spectrahedral cone $\mathbb{S}^{n}_{++} \cap \mathbb{V}$ is a
homogeneous convex cone, then its rank and the Carath\'eodory number of its
closure $\mathbb{S}^{n}_{+} \cap \mathbb{V}$ both coincide with the order
$n$.
\end{theorem}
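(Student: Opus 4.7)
The plan is to apply Theorem~\ref{theorem:characterizationofrankmatchingcaratheodorynumber} directly. By definition a sparse spectrahedral cone $\mathbb{S}^{n}_{++} \cap \mathbb{V}$ is a block matrix spectrahedral cone with $n_{1} = \dotsb = n_{r} = 1$, so $n = n_{1} + \dotsb + n_{r} = r$; that is, the order of the cone already coincides with its rank. Hence it only remains to verify the Carathéodory side, namely that the dimension-matching condition of Theorem~\ref{theorem:characterizationofrankmatchingcaratheodorynumber} is satisfied: whenever $1 \leq i < j < k \leq r$ and $\mathbb{V}_{ij}$ is nontrivial, one has $\dim \mathbb{V}_{ik} = \dim \mathbb{V}_{jk}$.

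In the sparse setting, each $\mathbb{V}_{ij}$ with $i < j$ lies in $\mathbb{R}^{1 \times 1} = \mathbb{R}$, so it is either $\{0\}$ or $\mathbb{R}$, and the required dimension equality reduces to the biconditional
\[
(i, k) \in \hat{E} \iff (j, k) \in \hat{E}
\]
under the hypothesis $(i, j) \in \hat{E}$. The forward implication is precisely property~\eqref{property:perfectelimination}: if $(i, j), (i, k) \in \hat{E}$ with $j < k$, then $(j, k) \in \hat{E}$. The reverse implication is precisely property~\eqref{property:trivialperfectelimination}: if $(i, j), (j, k) \in \hat{E}$, then $(i, k) \in \hat{E}$. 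Both properties are encoded in \ref{axiom:triangularproductblock} and \ref{axiom:mixedtriangularproductblock}, and are therefore guaranteed by the hypothesis that $\mathbb{S}^{n}_{++} \cap \mathbb{V}$ is a homogeneous convex cone (with \ref{axiom:selfproductblock} holding automatically since $n_{i} = 1$).

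There is no genuine obstacle here, since the whole argument is a short case check running on top of Theorem~\ref{theorem:characterizationofrankmatchingcaratheodorynumber}; the mildly delicate point is just to note that there is no mixed case (one of $\mathbb{V}_{ik}, \mathbb{V}_{jk}$ trivial and the other not) once properties \eqref{property:trivialperfectelimination} and \eqref{property:perfectelimination} are imposed. I would close by remarking that the dual half of Theorem~\ref{theorem:characterizationofrankmatchingcaratheodorynumber} goes through by the same reasoning applied to the reversed triples $1 \leq k < j < i \leq r$, which even yields that the Carathéodory number of the dual cone also equals $n$, although this is not part of the stated conclusion.
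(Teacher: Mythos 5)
Your argument has a fundamental gap: you treat the given sparse block structure (with $n_1 = \dotsb = n_r = 1$) as an \emph{Ishi spectrahedral representation} of the cone, i.e.\ as a block matrix spectrahedral cone satisfying the axioms \ref{axiom:triangularproductblock}--\ref{axiom:selfproductblock}. But the hypothesis of the theorem is only that the sparse spectrahedral cone $\mathbb{S}^{n}_{++}\cap\mathbb{V}$ happens to be a homogeneous convex cone; nothing in the hypothesis says that the given labelling of the $n$ blocks satisfies \ref{axiom:triangularproductblock} and \ref{axiom:mixedtriangularproductblock} (equivalently, that the sparsity graph with this labelling has a homogeneous perfect elimination ordering). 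Indeed, establishing precisely that fact is the content of Theorem~\ref{theorem:homogeneouschordalgraph}, which is proved \emph{after} Theorem~\ref{theorem:sparsespectrahedralcone} and relies on it; appealing to the axioms here is circular. Both steps of your proof inherit this flaw. First, your claim that ``$n = n_1 + \dotsb + n_r = r$, so the order coincides with the rank'' conflates the number of blocks in the sparse block structure (which is trivially $n$) with the intrinsic rank of the homogeneous cone (which equals the number of blocks \emph{only} in an Ishi representation); a priori the cone could have an Ishi representation of a different order $m$ with a different number $r' \neq n$ of blocks. Second, your application of Theorem~\ref{theorem:characterizationofrankmatchingcaratheodorynumber} requires an Ishi spectrahedral representation, which you have not produced.

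The paper's proof takes an entirely different route that avoids assuming anything about the block structure. It first establishes $r = n$ by observing that the coordinate subspaces give a chain of faces $\mathbb{S}^n_+\cap\mathbb{V}^{\{1,\dotsc,k\}}$ of the closure (via Proposition~\ref{proposition:facesofslices}), this chain has length $n$ and is maximal because any chain of faces of $\mathbb{S}^n_+\cap\mathbb{V}$ corresponds, again by Proposition~\ref{proposition:facesofslices}, to a chain of faces of $\mathbb{S}^n_+$ of the same length (at most $n$), and the geometric characterization of rank from Section~\ref{section:facialstructureprimal} then forces $r = n$. It then shows $\kappa \geq n$ by observing that every extreme ray of the closure $\mathbb{S}^n_+\cap\mathbb{V}$ is an extreme ray of $\mathbb{S}^n_+$ (since it is $\mathsf{F}'\cap\mathbb{V}$ for some order-$1$ face $\mathsf{F}'$ of $\mathbb{S}^n_+$), so a Carath\'eodory decomposition of $I_n$ in $\bar{\mathsf{K}}$ is also one in $\mathbb{S}^n_+$, giving $\kappa_{\bar{\mathsf{K}}} \geq \kappa_{\mathbb{S}^n_+}(I_n) = n$; combined with $\kappa_{\bar{\mathsf{K}}} \leq r = n$ from Section~\ref{section:facialstructureprimal}, this gives the result without ever invoking Theorem~\ref{theorem:characterizationofrankmatchingcaratheodorynumber}.
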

\begin{proof}
When a sparse spectrahedral cone $\mathbb{S}^{n}_{++} \cap \mathbb{V}$ is a
homogeneous convex cone of rank $r$, the maximal chain of proper faces
$\mathbb{S}^{n}_{+} \cap \mathbb{V}^{\left\{1, \dotsc, n\right\}}, \dotsc,
\mathbb{S}^{n}_{+} \cap \mathbb{V}^{\left\{1\right\}}$ of the closure
$\mathbb{S}^{n}_{+} \cap \mathbb{V}$ shows that its rank $r$ matches the
order $n$; see Section~\ref{section:facialstructureprimal}. Moreover, every
maximal chain of proper faces $\mathsf{F}(1), \dotsc, \mathsf{F}(r = n)$ of
its closure $\mathbb{S}^{n}_{+} \cap \mathbb{V}$ is of the form
$\mathsf{F}'(1) \cap \mathbb{V}, \dotsc, \mathsf{F}'(n) \cap \mathbb{V}$
for some chain of proper faces $\mathsf{F}'(1), \dotsc, \mathsf{F}'(n)$ of
$\mathbb{S}^{n}_{+}$ (see Proposition~\ref{proposition:facesofslices}),
which must then be also maximal. Therefore, we conclude that the rank of
the homogeneous relative interior of each face $\mathsf{F}(p)$ and the
order of the respective face $\mathsf{F}'(p)$ (i.e., the order of the
positive semidefinite cone to which $\mathsf{F}'(p)$ is linearly
isomorphic) both coincide with $n-p+1$. Since every proper face
$\mathsf{F}$ of the closure $\mathbb{S}^{n}_{+} \cap \mathbb{V}$ belongs to
some maximal chain of proper faces, the rank of its homogeneous relative
interior must then match the order of the proper face $\mathsf{F}'$ of
$\mathbb{S}^{n}_{+}$ for which $\mathsf{F} = \mathsf{F}' \cap \mathbb{V}$.
In particular, every extreme ray of the closure $\mathbb{S}^{n}_{+} \cap
\mathbb{V}$, whose homogeneous relative interior has rank $1$, must be an
extreme ray of $\mathbb{S}^{n}_{+}$ because it is of the form $\mathsf{F}'
\cap \mathbb{V}$ for some face $\mathsf{F}'$ of $\mathbb{S}^{n}_{+}$ of
order 1. So, the matrix $I_{n}$ in both homogeneous convex cones
$\mathbb{S}^{n}_{++}$ and $\mathbb{S}^{n}_{++} \cap \mathbb{V}$ is in the
Minkowski sum of $\kappa_{\mathbb{S}^{n}_{+} \cap \mathbb{V}}$ extreme rays
of the closure $\mathbb{S}^{n}_{+}$. This shows that $r \geq
\kappa_{\mathbb{S}^{n}_{+} \cap \mathbb{V}} \geq
\kappa_{\mathbb{S}^{n}_{+}} = n$.
\end{proof}

As a consequence of the above theorem, a sparse spectrahedral cone that is
also a homogeneous convex cone must have rank $r = n$ and every Ishi
spectrahedral representation $\mathsf{K} = \mathbb{S}^{m}_{++} \cap
\mathbb{W}$, with $m \geq r = n$, satisfies the necessary and sufficient
condition given in
Theorem~\ref{theorem:characterizationofrankmatchingcaratheodorynumber} for
its rank $r$ to match the Carath\'eodory number $\kappa_{\bar{\mathsf{K}}}$
of its closure $\bar{\mathsf{K}} = \mathbb{S}^{m}_{+} \cap \mathbb{W}$;
namely, for every $1 \leq i < j < k \leq r$, the linear subspaces
$\mathbb{W}_{ik}$ and $\mathbb{W}_{jk}$ have the same dimensions whenever
$\mathbb{W}_{ij}$ is not the trivial subspace. We further note that when
$\mathbb{S}^{m}_{++} \cap \mathbb{W}$ is linearly isomorphic to a sparse
spectrahedral cone, every linear subspace $\mathbb{W}_{ij}$ has dimension
at most one. This is because, as shown in the above proof, every face of
the closure $\bar{\mathsf{K}}$ has a spectrahedral representation whose
order matches the rank of its homogeneous relative interior, which means
the face $\mathsf{F}_{I_{m},\left\{i,j\right\}}$ of $\bar{\mathsf{K}}$ for
any $1 \leq i < j \leq r$, whose relative interior is a $(\dim
\mathbb{W}_{ij} + 2)$-dimensional homogeneous convex cone of rank $2$, has
a spectrahedral representation of order $2$, and hence dimension at most
$3$. We remind the reader that this does not mean that $\mathbb{S}^{m}_{+}
\cap \mathbb{W}$ is a sparse spectrahedral cone because the $n_{i}$'s may
not all be $1$.

As an aside, when we combine the preceding discussion with our
characterization of selfdual homogeneous cones in
Theorem~\ref{theorem:selfdualhomogeneouscone}, we arrive at the
\begin{corollary}\label{theorem:dualsparsespectrahedralcone}
The rank of a homogeneous sparse spectrahedral cone matches the
Carath\'eodory number of its dual cone if and only if all its
indecomposable components are cones of positive definite real symmetric
matrices.
\end{corollary}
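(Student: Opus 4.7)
The plan is to reduce the corollary to a characterization of indecomposable selfdual homogeneous sparse spectrahedral cones and then read off the answer from the Ishi block structure. Combining Theorem~\ref{theorem:sparsespectrahedralcone}, which says the rank of every homogeneous sparse spectrahedral cone already equals the Carath\'eodory number of its closure, with Theorem~\ref{theorem:selfdualhomogeneouscone}, the condition ``the rank matches the Carath\'eodory number of the dual cone'' becomes, within the class of homogeneous sparse spectrahedral cones, equivalent to ``selfdual''. So the corollary reduces to showing that a homogeneous sparse spectrahedral cone is selfdual if and only if each of its indecomposable components is $\mathbb{S}^{k}_{++}$ for some $k$.

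Next I would pass to the indecomposable case. For a homogeneous sparse spectrahedral cone $\mathsf{K}(G) = \mathbb{S}^{n}_{++} \cap \mathbb{V}(G)$, the decomposition $\mathbb{V}(G) = \mathbb{V}_{\mathsf{I}_{1}} \oplus \dotsb \oplus \mathbb{V}_{\mathsf{I}_{s}}$ into indecomposable summands described before Theorem~\ref{theorem:selfdualhomogeneouscone} matches the partition of the vertex set into the connected components of $G$, since such a direct sum splitting exists precisely when no edge of $G$ crosses between two classes of indices. Each indecomposable component is therefore itself a homogeneous sparse spectrahedral cone on the corresponding connected subgraph, and inherits selfduality from $\mathsf{K}(G)$. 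Since a direct sum of cones of the form $\mathbb{S}^{k_{l}}_{++}$ is trivially selfdual, only the forward implication remains.

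To finish, I would show that every indecomposable selfdual homogeneous sparse spectrahedral cone is some $\mathbb{S}^{k}_{++}$ by invoking the constancy of $\dim \mathbb{V}_{ij}$ for $1 \leq i < j \leq r$ in any Ishi representation of an indecomposable selfdual homogeneous cone (the Vinberg fact recalled just above the corollary, via the $T$-algebra correspondence). Because the cone is also a sparse spectrahedral cone, $n_{i} = 1$ and $\dim \mathbb{V}_{ij} \leq 1$, so this common dimension is either $0$ or $1$. If it is $0$, then $\mathbb{V}(G)$ is the span of $I_{n}$, whose relative interior decomposes into $n$ copies of $\mathbb{R}_{++}$; indecomposability forces $n = 1$, giving $\mathbb{R}_{++} = \mathbb{S}^{1}_{++}$. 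If it is $1$, then $\mathbb{V}(G) = \mathbb{S}^{n}$ and $\mathsf{K}(G) = \mathbb{S}^{n}_{++}$. The only mildly delicate point I anticipate is matching the indecomposable decomposition of $\mathsf{K}(G)$ with the connected-component decomposition of $G$, but this follows immediately from the block structure of $\mathbb{V}(G)$ together with the indecomposability criterion recalled in Section~\ref{section:selfdualhomogeneouscones}.
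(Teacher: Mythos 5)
Your overall reduction mirrors the paper's exactly: Theorem~\ref{theorem:sparsespectrahedralcone} gives that the rank already matches the Carath\'eodory number of the closure, Theorem~\ref{theorem:selfdualhomogeneouscone} then converts ``rank matches the Carath\'eodory number of the dual cone'' into ``selfdual,'' and Vinberg's classification of indecomposable selfdual homogeneous cones with small block dimensions closes the argument. The pass through connected components of $G$ is a nice extra structural observation that the paper leaves implicit.

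There is one genuine gap, though, at the step ``Because the cone is also a sparse spectrahedral cone, $n_i=1$ and $\dim\mathbb{V}_{ij}\le 1$.'' You are conflating the sparse representation $\mathbb{V}(G)$ with an Ishi spectrahedral representation of the cone. Theorem~\ref{theorem:Ishirepresentation} only guarantees a linear isomorphism to \emph{some} Ishi representation $\mathbb{S}^m_{++}\cap\mathbb{W}$, and the paper explicitly cautions at the end of the discussion preceding the corollary that this representation need not have all $n_i=1$, so $\mathbb{S}^m_+\cap\mathbb{W}$ need not be a sparse spectrahedral cone. Asserting $\mathbb{V}(G)$ itself is an Ishi representation (after relabelling) amounts to knowing $G$ is homogeneous chordal, which is Theorem~\ref{theorem:homogeneouschordalgraph} --- a result established only \emph{after} this corollary, so invoking it would introduce a logical circularity. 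The paper instead derives $\dim\mathbb{W}_{ij}\le 1$ purely from the order-equals-rank phenomenon of Theorem~\ref{theorem:sparsespectrahedralcone}: the rank-$2$ face $\mathsf{F}_{I_m,\{i,j\}}$ has a spectrahedral representation of order $2$, hence dimension at most $3$, hence $\dim\mathbb{W}_{ij}\le 1$. Substituting that argument for your $n_i=1$ claim makes the proof correct, and the remaining case split ($0$ versus $1$, yielding $\mathbb{R}_{++}$ or $\mathbb{S}^r_{++}$) then goes through as you wrote it, with $\mathbb{W}$ in place of $\mathbb{V}(G)$.
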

\begin{proof}
Theorem~\ref{theorem:sparsespectrahedralcone} states that a homogeneous
sparse spectrahedral cone has a rank-matching Carath\'eodory number. So, by
Theorem~\ref{theorem:selfdualhomogeneouscone}, its dual cone has a
rank-matching Carath\'eodory number if and only if it is selfdual. The
preceding discussion asserts that every Ishi spectrahedral representation
$\mathbb{S}^{n}_{++} \cap \mathbb{W}$ of a homogeneous sparse spectrahedral
cone has linear subspaces $\mathbb{W}_{ij}$ of dimensions at most one, and
the only indecomposable selfdual homogeneous convex cones with such Ishi
spectrahedral representations are the cones of positive definite real
symmetric matrices; see \cite[\S{II}.2]{Vin65}.
\end{proof}

We now return our attention to indecomposable sparse spectrahedral cones of
order $4$ that are also homogeneous convex cones. Such a homogeneous convex
cone must have rank $4$ and Ishi spectrahedral representation $\mathsf{K} =
\mathbb{S}^{n}_{++} \cap \mathbb{W}$ for some $n \geq 4$, where for all $1
\leq i < j \leq 4$, $\dim \mathbb{W}_{ij} \leq 1$ and $\dim \mathbb{W}_{ik}
= \dim \mathbb{W}_{jk}$ for all $k \in \left\{j+1, \dotsc, 4\right\}$
whenever $\mathbb{W}_{ij}$ is not the trivial subspace. Since $\mathsf{K}$
is indecomposable, at least one of $\mathbb{W}_{12}$, $\mathbb{W}_{13}$,
and $\mathbb{W}_{14}$ is nontrivial.
\begin{enumerate}[label=(\roman*)]
\item If only $\mathbb{W}_{12}$ is nontrivial, then $0 = \dim
\mathbb{W}_{1k} = \dim \mathbb{W}_{2k}$ for $k = 3, 4$; but this means that
$\mathbb{W}$, and hence $\mathsf{K}$, is decomposable as $\mathbb{W} =
\mathbb{W}_{\left\{1,2\right\}} \oplus \mathbb{W}_{\left\{3,4\right\}}$.

\item If only $\mathbb{W}_{13}$ is nontrivial, then $\dim \mathbb{W}_{12} =
0 = \dim \mathbb{W}_{14} = \dim \mathbb{W}_{34}$, so that the
indecomposability of $\mathbb{W}$ into, respectively,
$\mathbb{W}_{\left\{1,3\right\}} \oplus \mathbb{W}_{\left\{2,4\right\}}$
and $\mathbb{W}_{\left\{1,2,3\right\}} \oplus
\mathbb{W}_{\left\{4\right\}}$ means that $\mathbb{W}_{23}$ and
$\mathbb{W}_{24}$ are nontrivial; but this means that $\mathbb{W}_{24}$ and
$\mathbb{W}_{34}$ have different dimensions, and yet $\mathbb{W}_{23}$ is
nontrivial.

\item If only $\mathbb{W}_{14}$ is nontrivial, then $\dim \mathbb{W}_{12} =
\dim \mathbb{W}_{13} = 0$, so that the indecomposability of $\mathbb{W}$
means that at most one of $\mathbb{W}_{23}$, $\mathbb{W}_{24}$, and
$\mathbb{W}_{34}$ can be the trivial subspace. When $\mathbb{W}_{23}$ is
the trivial subspace, the homogeneous convex cone has the Ishi
spectrahedral representation
\[
\mathbb{S}^{4}_{++} \cap \left\{
\begin{pmatrix}
x_{1} & 0 & 0 & x_{5}
\\
0 & x_{2} & 0 & x_{6}
\\
0 & 0 & x_{3} & x_{7}
\\
x_{5} & x_{6} & x_{7} & x_{4}
\end{pmatrix}
\ \middle|\ x \in \mathbb{R}^{7}\right\}.
\]
Otherwise, the nontriviality of $\mathbb{W}_{23}$ means that
$\mathbb{W}_{24}$ and $\mathbb{W}_{34}$ have the same dimensions and are,
therefore, both nontrivial because at most one of them can be the trivial
subspace. This means that the homogeneous convex cone has the Ishi
spectrahedral representation
\[
\mathbb{S}^{4}_{++} \cap \left\{
\begin{pmatrix}
x_{1} & 0 & 0 & x_{6}
\\
0 & x_{2} & x_{5} & x_{7}
\\
0 & x_{5} & x_{3} & x_{8}
\\
x_{6} & x_{7} & x_{8} & x_{4}
\end{pmatrix}
\ \middle|\ x \in \mathbb{R}^{8}\right\}.
\]

\item If only $\mathbb{W}_{12}$ and $\mathbb{W}_{13}$ are nontrivial, then
$0 = \dim \mathbb{W}_{14} = \dim \mathbb{W}_{j4}$ for $j = 2, 3$; but this
means that $\mathbb{W}$ is decomposable as $\mathbb{W} =
\mathbb{W}_{\left\{1,2,3\right\}} \oplus \mathbb{W}_{\left\{4\right\}}$.

\item If only $\mathbb{W}_{12}$ and $\mathbb{W}_{14}$ are nontrivial, then
$0 = \dim \mathbb{W}_{13} = \dim \mathbb{W}_{23}$ and $1 = \dim
\mathbb{W}_{14} = \dim \mathbb{W}_{24}$, so that the indecomposability of
$\mathbb{W}$ into $\mathbb{W}_{\left\{1,2,4\right\}} \oplus
\mathbb{W}_{\left\{3\right\}}$ means that $\mathbb{W}_{34}$ is nontrivial.
This means that the homogeneous convex cone has the above Ishi
spectrahedral representation of dimension $8$.

\item If only $\mathbb{W}_{13}$ and $\mathbb{W}_{14}$ are nontrivial, then
$1 = \dim \mathbb{W}_{14} = \dim \mathbb{W}_{34}$. The indecomposability of
$\mathbb{W}$ into $\mathbb{W}_{\left\{1,3,4\right\}} \oplus
\mathbb{W}_{\left\{2\right\}}$ means that at least one of $\mathbb{W}_{23}$
and $\mathbb{W}_{24}$ is nontrivial. If $\mathbb{W}_{23}$ is nontrivial,
then $\dim \mathbb{W}_{24} = \dim \mathbb{W}_{34} = 1$ means that the
homogeneous convex cone has the Ishi spectrahedral representation
\[
\mathbb{S}^{4}_{++} \cap \left\{
\begin{pmatrix}
x_{1} & 0 & x_{5} & x_{7}
\\
0 & x_{2} & x_{6} & x_{8}
\\
x_{5} & x_{6} & x_{3} & x_{9}
\\
x_{7} & x_{8} & x_{9} & x_{4}
\end{pmatrix}
\ \middle|\ x \in \mathbb{R}^{9}\right\}.
\]
Otherwise, $\mathbb{W}_{24}$ must be nontrivial and hence the homogeneous
convex cone has the above Ishi spectrahedral representation of dimension
$8$.

\item If $\mathbb{W}_{12}$, $\mathbb{W}_{13}$, and $\mathbb{W}_{14}$ are
all nontrivial, then $1 = \dim \mathbb{W}_{13} = \dim \mathbb{W}_{23}$ and
$1 = \dim \mathbb{W}_{14} = \dim \mathbb{W}_{j4}$ for $j = 2, 3$ means that
the homogeneous convex cone is linearly isomorphic to the positive definite
cone $\mathbb{S}^{4}_{++}$.

\end{enumerate}
In summary, there are four linear isomorphism classes of indecomposable
homogeneous sparse spectrahedral cones of rank $4$, and they have
dimensions $7$, $8$, $9$, and $10$, respectively. We shall denote the above
representative cones of the first three classes by $\mathsf{K}^{4}(7)$,
$\mathsf{K}^{4}(8)$, and $\mathsf{K}^{4}(9)$, respectively.

On the other hand, the two sparse spectrahedral cones described by $P_{4}$
and $C_{4}$ are, respectively,
\[
\mathsf{K}(P_{4}) = \mathbb{S}^{4}_{++} \cap \left\{
\begin{pmatrix}
x_{1} & x_{5} & 0 & x_{6}
\\
x_{5} & x_{2} & 0 & 0
\\
0 & 0 & x_{3} & x_{7}
\\
x_{6} & 0 & x_{7} & x_{4}
\end{pmatrix}
\ \middle|\ x \in \mathbb{R}^{7}\right\}
\]
and
\[
\mathsf{K}(C_{4}) = \mathbb{S}^{4}_{++} \cap \left\{
\begin{pmatrix}
x_{1} & x_{5} & 0 & x_{7}
\\
x_{5} & x_{2} & x_{6} & 0
\\
0 & x_{6} & x_{3} & x_{8}
\\
x_{7} & 0 & x_{8} & x_{4}
\end{pmatrix}
\ \middle|\ x \in \mathbb{R}^{8}\right\}.
\]
These are not linearly isomorphic to the above homogeneous convex cones of
the respective dimensions for the following respective reasons.
\begin{enumerate}[label=(\roman*)]
\item The maximal proper face
\[
\left\{
\begin{pmatrix}
X & O_{3 \times 1}
\\
O_{3 \times 1}^{\top} & 0
\end{pmatrix}
\ \middle|\ X \in \mathbb{S}^{3}_{+}\right\}
\]
of $\mathbb{S}^{4}_{+}$ intersects with the linear span $\mathbb{W}(P_{4})$
of $\mathsf{K}(P_{4})$ to give the $4$-dimensional maximal proper face
\[
\mathbb{S}^{4}_{+} \cap \left\{
\begin{pmatrix}
X & O_{2 \times 1} & O_{2 \times 1}
\\
O_{2 \times 1}^{\top} & x & 0
\\
O_{2 \times 1}^{\top} & 0 & 0
\end{pmatrix}
\ \middle|\ X \in \mathbb{S}^{2}, \ x \in \mathbb{R}\right\}
\]
of the closure of $\mathsf{K}(P_{4})$. On the other hand, maximal proper
faces of the closure of the Ishi spectrahedral representation
$\mathsf{K}^{4}(7)$ are linearly isomorphic to the faces
$\mathsf{F}_{I_{4},\left\{1, 2, 3\right\}}$, $\mathsf{F}_{I_{4},\left\{1,
2, 4\right\}}$, $\mathsf{F}_{I_{4},\left\{1, 3, 4\right\}}$, or
$\mathsf{F}_{I_{4},\left\{2, 3, 4\right\}}$, which have dimensions $3$ or
$5$.

\item Extreme rays of the closure of the Ishi spectrahedral representation
$\mathsf{K}^{4}(8)$ are of one of the four forms
\[
\mathbb{R}_+
\begin{pmatrix}
1 \\ 0 \\ 0 \\ u_{6}
\end{pmatrix}
\begin{pmatrix}
1 \\ 0 \\ 0 \\ u_{6}
\end{pmatrix}^{\top},
\mathbb{R}_+
\begin{pmatrix}
0 \\ 1 \\ u_{5} \\ u_{7}
\end{pmatrix}
\begin{pmatrix}
0 \\ 1 \\ u_{5} \\ u_{7}
\end{pmatrix}^{\top},
\mathbb{R}_+
\begin{pmatrix}
0 \\ 0 \\ 1 \\ u_{8}
\end{pmatrix}
\begin{pmatrix}
0 \\ 0 \\ 1 \\ u_{8}
\end{pmatrix}^{\top},
\mathbb{R}_+
\begin{pmatrix}
0 \\ 0 \\ 0 \\ 1
\end{pmatrix}
\begin{pmatrix}
0 \\ 0 \\ 0 \\ 1
\end{pmatrix}^{\top}.
\]
So, all extreme rays are in the union of the two proper faces
\[
\mathbb{S}^{4}_{+} \cap \left\{
\begin{pmatrix}
X_{11} & O_{1 \times 2} & X_{12}
\\
O_{1 \times 2}^{\top} & O_{2 \times 2} & O_{2 \times 1}
\\
X_{12} & O_{2 \times 1}^{\top} & X_{22}
\end{pmatrix}
\ \middle|\ X \in \mathbb{S}^{2}\right\} \quad \text{and} \quad
\mathbb{S}^{4}_{+} \cap \left\{
\begin{pmatrix}
0 & O_{1 \times 3}
\\
O_{1 \times 3}^{\top} & X
\end{pmatrix}
\ \middle|\ X \in \mathbb{S}^{3}\right\},
\]
which are linearly isomorphic to the respective positive semidefinite cones
$\mathbb{S}^{2}_{+}$ and $\mathbb{S}^{3}_{+}$.

On the other hand, we deduce from
Proposition~\ref{proposition:facesofslices} that extreme rays of the
closure of the sparse spectrahedral cone $\mathsf{K}(C_{4})$ are extreme
rays of $\mathbb{S}^{4}_{+}$ that lie in the linear span
$\mathbb{W}(C_{4})$ of $\mathsf{K}(C_{4})$, and hence $\mathsf{F}(i) :=
\mathbb{R}_+ (T(i)^{\top} T(i))$ for $i = 1, 2, 3$, where
\[
\begin{aligned}
T(1) &=
\begin{pmatrix}
1 & 1 & 0 & 0
\end{pmatrix},
\\
T(2) &=
\begin{pmatrix}
0 & 1 & 1 & 0
\end{pmatrix},
\\
T(3) &=
\begin{pmatrix}
0 & 0 & 1 & 1
\end{pmatrix},
\end{aligned}
\]
are three particular extreme rays of the closure of $\mathsf{K}(C_{4})$.
The smallest face of the closure of $\mathsf{K}(C_{4})$ containing all
three of them is
\[
\begin{split}
&\left\{
\begin{pmatrix}
T(1) \\ T(2) \\
T(3)
\end{pmatrix}^{\top}
X
\begin{pmatrix}
T(1) \\ T(2) \\
T(3)
\end{pmatrix}
\ \middle|\ X \in \mathbb{S}^{3}\right\} \cap \left\{
\begin{pmatrix}
x_{1} & x_{5} & 0 & x_{7}
\\
x_{5} & x_{2} & x_{6} & 0
\\
0 & x_{6} & x_{3} & x_{8}
\\
x_{7} & 0 & x_{8} & x_{4}
\end{pmatrix}
\ \middle|\ x \in \mathbb{R}^{8}\right\}
\\
&= \mathbb{S}^{4}_{+} \cap \left\{
\begin{pmatrix}
T(1) \\ T(2) \\
T(3)
\end{pmatrix}^{\top}
X
\begin{pmatrix}
T(1) \\ T(2) \\
T(3)
\end{pmatrix}
\ \middle|\ X \in \mathbb{S}^{3}, \ X_{12} + X_{13} = X_{13} + X_{23} =
0\right\},
\end{split}
\]
whose relative interior is linearly isomorphic to a four-dimensional
spectrahedral cone of order $3$, while the smallest face of the closure of
$\mathsf{K}(C_{4})$ containing each pair $(\mathsf{F}(i), \mathsf{F}(j))$
of extreme rays with $1 \leq i < j \leq 3$ is
\[
\begin{split}
&\left\{
\begin{pmatrix}
T(i) \\ T(j)
\end{pmatrix}^{\top}
X
\begin{pmatrix}
T(i) \\ T(j)
\end{pmatrix}
\ \middle|\ X \in \mathbb{S}^{2}\right\} \cap \left\{
\begin{pmatrix}
x_{1} & x_{5} & 0 & x_{7}
\\
x_{5} & x_{2} & x_{6} & 0
\\
0 & x_{6} & x_{3} & x_{8}
\\
x_{7} & 0 & x_{8} & x_{4}
\end{pmatrix}
\ \middle|\ x \in \mathbb{R}^{8}\right\}
\\
&= \mathbb{S}^{4}_{+} \cap \left\{x_{1} T(i)^{\top} T(i) + x_{2}
T(j)^{\top} T(j)\ \middle|\ x \in \mathbb{R}^{2}\right\},
\end{split}
\]
which is linearly isomorphic to $\mathbb{R}_+^{2}$. Therefore, any face of
the closure of $\mathsf{K}(C_{4})$ containing at least two of these three
extreme rays cannot be contained in a face that is linearly isomorphic to
either $\mathbb{S}^{2}_{+}$ or $\mathbb{S}^{3}_{+}$, because neither
$\mathbb{S}^{2}_{+}$ nor $\mathbb{S}^{3}_{+}$ has a face that is linearly
isomorphic to either of the above smallest containing faces. This means
that these three extreme rays cannot be contained in the union of two faces
of the closure of $\mathsf{K}(C_{4})$ that are respectively linearly
isomorphic to $\mathbb{S}^{2}_{+}$ and $\mathbb{S}^{3}_{+}$.

\end{enumerate}

Thus, we have a new geometric proof of the previously known result that
\begin{theorem}\label{theorem:homogeneouschordalgraph}
A sparse spectrahedral cone is a homogeneous convex cone if and only if it
is described by a homogeneous chordal graph.
\end{theorem}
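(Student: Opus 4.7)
The plan is to prove the two directions separately, leveraging the characterization in Theorem~\ref{theorem:characterizationofrankmatchingcaratheodorynumber} together with the graph-theoretic fact that a simple undirected graph is homogeneous chordal precisely when it contains neither $P_4$ nor $C_4$ as an induced subgraph. For the ``if'' direction, suppose $G$ is homogeneous chordal and fix a vertex labeling that witnesses this, so that both perfect-elimination-type properties \eqref{property:trivialperfectelimination} and \eqref{property:perfectelimination} hold on $\hat{E}$. These translate directly into axioms \ref{axiom:triangularproductblock} and \ref{axiom:mixedtriangularproductblock} on the one-dimensional blocks $\mathbb{V}_{ij} \subseteq \mathbb{R}$, while \ref{axiom:selfproductblock} is automatic as each $n_i = 1$. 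Theorem~\ref{theorem:Ishirepresentation} then identifies $\mathsf{K}(G)$ as a homogeneous convex cone.

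For the converse, suppose $\mathsf{K}(G)$ is homogeneous. The remark in Section~\ref{section:sparsespectrahedralcones} on induced subgraphs implies that $\mathsf{K}(H)$ is homogeneous for every induced subgraph $H$ of $G$, so it suffices to prove that neither $\mathsf{K}(P_4)$ nor $\mathsf{K}(C_4)$ is homogeneous. The intermediate step is to classify all indecomposable rank-$4$ homogeneous sparse spectrahedral cones. Any such cone admits an Ishi representation $\mathbb{S}^m_{++} \cap \mathbb{W}$ with $m \geq 4$, in which $\dim \mathbb{W}_{ij} \leq 1$ for all $i < j$ (by the dimension bound stated after Theorem~\ref{theorem:sparsespectrahedralcone}) and which satisfies the dimension equality condition of Theorem~\ref{theorem:characterizationofrankmatchingcaratheodorynumber}. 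I would split into cases according to which of $\mathbb{W}_{12}$, $\mathbb{W}_{13}$, $\mathbb{W}_{14}$ are nontrivial, propagate the resulting equal-dimension conditions to the remaining $\mathbb{W}_{ij}$, and discard any splittings that contradict indecomposability. I expect exactly four linear-isomorphism classes, of dimensions $7$, $8$, $9$, and $10$, which I denote $\mathsf{K}^4(7)$, $\mathsf{K}^4(8)$, $\mathsf{K}^4(9)$, and $\mathbb{S}^4_{++}$.

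The main obstacle is then to show that $\mathsf{K}(P_4)$ is not linearly isomorphic to $\mathsf{K}^4(7)$ and $\mathsf{K}(C_4)$ is not linearly isomorphic to $\mathsf{K}^4(8)$---the only candidates matching by dimension. For $\mathsf{K}(P_4)$, the plan is to compare dimensions of maximal proper faces via Proposition~\ref{proposition:facesofslices}: $\mathsf{K}(P_4)$ has a $4$-dimensional maximal proper face obtained by zeroing out the row and column of a single vertex, while the maximal proper faces of $\mathsf{K}^4(7)$---namely $\mathsf{F}_{I_4,\{1,2,3\}}$, $\mathsf{F}_{I_4,\{1,2,4\}}$, $\mathsf{F}_{I_4,\{1,3,4\}}$, and $\mathsf{F}_{I_4,\{2,3,4\}}$---have dimensions $3$ or $5$. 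For $\mathsf{K}(C_4)$, the plan is to parametrize the extreme rays of $\mathsf{K}^4(8)$, observe that they all lie in the union of two concrete faces respectively isomorphic to $\mathbb{S}^2_+$ and $\mathbb{S}^3_+$, and then exhibit three extreme rays of $\mathsf{K}(C_4)$---the rays generated by $(1,1,0,0)^\top$, $(0,1,1,0)^\top$, and $(0,0,1,1)^\top$---whose three pairwise minimal containing faces are each $\mathbb{R}_+^2$ but whose triple minimal containing face is $4$-dimensional. No triple of extreme rays contained in the union $\mathbb{S}^2_+ \cup \mathbb{S}^3_+$ can realize this pattern of incidences, completing the argument.
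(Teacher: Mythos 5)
Your proposal is correct and follows essentially the same route as the paper: the ``if'' direction via Theorem~\ref{theorem:Ishirepresentation}, the ``only if'' direction by reducing (via induced subgraphs) to ruling out $\mathsf{K}(P_4)$ and $\mathsf{K}(C_4)$, classifying the four indecomposable rank-$4$ homogeneous sparse spectrahedral cones using the dimension bound and the primal condition of Theorem~\ref{theorem:characterizationofrankmatchingcaratheodorynumber}, distinguishing $\mathsf{K}(P_4)$ from $\mathsf{K}^4(7)$ by maximal-proper-face dimensions, and distinguishing $\mathsf{K}(C_4)$ from $\mathsf{K}^4(8)$ by the extreme-ray incidence pattern against the union $\mathbb{S}^2_+ \cup \mathbb{S}^3_+$. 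The choice of the three extreme rays, the pairwise minimal faces being $\mathbb{R}_+^2$, and the $4$-dimensional triple minimal face all match the paper's argument exactly.
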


\section{Acknowledgement}

The author expresses his gratitude to two anonymous referees for their
comments and suggestions that led to the improved presentation of this
manuscript, particularly in Sections 4.1 and 5.

\section*{Declarations}

\subsection*{Funding}
No funding was received to assist with the preparation of this manuscript.

\subsection*{Competing interests}
The author has no competing interests to declare that are relevant to the
content of this article.

\end{document}